\documentclass{article}

\usepackage{algorithmicx}
\usepackage{algpseudocode}
\usepackage{arxiv}
\usepackage{stmaryrd}
\usepackage{appendix}
\usepackage[utf8]{inputenc} % allow utf-8 input
\usepackage[T1]{fontenc}    % use 8-bit T1 fonts
\usepackage{hyperref}       % hyperlinks
\usepackage{url}            % simple URL typesetting
\usepackage{booktabs}       % professional-quality tables
\usepackage{amsfonts}       % blackboard math symbols
\usepackage{nicefrac}       % compact symbols for 1/2, etc.
\usepackage{microtype}      % microtypography
\usepackage{doi}
\usepackage{fix-cm}
\usepackage{enumitem}
\usepackage{style}
\usepackage{graphicx}
\usepackage{listings}
\usepackage{tikz}
\usepackage{doi}
\usepackage{amsmath}

\numberwithin{equation}{section}

\usepackage[normalem]{ulem}
\newcommand{\Sinit}{\mathcal{S}^\text{i}}
\newcommand{\SegmentInitial}{S^\text{i}}
\newcommand{\Sdiscarded}{\mathcal{S}^{\text{d}}}

\title{A Fictitious Domain Formulation with Bubble Enrichment}

\author{ \href{https://orcid.org/0000-0001-8642-4258}{\includegraphics[scale=0.06]{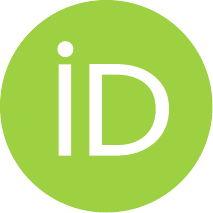}\hspace{1mm}Stefano~Berrone} \\
	Dipartimento di Scienze Matematiche\\
	``G. L. Lagrange''\\
	Politecnico di Torino, TO, 10129 \\
	\texttt{stefano.berrone@polito.it} \\
	\And
	\href{https://orcid.org/0009-0009-6203-9596}{\includegraphics[scale=0.06]{orcid.pdf}\hspace{1mm}Lorenzo~Neva} \\
	Dipartimento di Scienze Matematiche\\
	``G. L. Lagrange''\\
	Politecnico di Torino, TO, 10129 \\
	\texttt{lorenzo.neva@polito.it} \\
	\And
	\href{https://orcid.org/0000-0001-9976-6556}{\includegraphics[scale=0.06]{orcid.pdf}\hspace{1mm}Stefano~Scialò} \\
	Dipartimento di Scienze Matematiche\\
	``G. L. Lagrange''\\
	Politecnico di Torino, TO, 10129 \\
	\texttt{stefano.scialo@polito.it}  \\
	\And
	\href{https://orcid.org/0000-0001-7123-9199}{\includegraphics[scale=0.06]{orcid.pdf}\hspace{1mm}Fabio~Vicini} \\
	Dipartimento di Scienze Matematiche\\
	``G. L. Lagrange''\\
	Politecnico di Torino, TO, 10129 \\
	\texttt{fabio.vicini@polito.it} \\
}

\hypersetup{
pdftitle={A Fictitious Domain Formulation with Bubble Enrichment},
pdfsubject={},
pdfauthor={Stefano~Berrone, Lorenzo~Neva, Stefano~Scialò, Fabio~Vicini},
pdfkeywords={},
}

\begin{document}
\maketitle

\begin{abstract}
The fictitious domain method offers a powerful alternative to circumvent complex mesh generation process by embedding the complex physical domain into a simpler, non-matching background mesh. Among the various strategies to enforce the boundary condition on such a mesh, one possibility is to impose it weakly through a Lagrange multiplier. Existing approaches following this strategy typically require two independent uniform meshes for the domain and the boundary, whose mesh sizes are related by a prescribed ratio. In this work, we relax this requirement by constructing the boundary mesh directly from the trace of the background triangulation. To recover the uniform discrete inf-sup condition under this weaker mesh assumption, we enrich the discrete space of the solution with bubble functions. Our main result is the construction of a suitable operator that allows us to prove this uniform discrete inf-sup condition, thereby establishing the stability of the resulting scheme. As a consequence, we derive optimal a priori error estimates and provide a numerical experiment to validate the theoretical results. 
\end{abstract}

% keywords can be removed
\keywords{Complex Geometries, Fictitious Domain Method, Lagrange Multiplier, Bubble Function, Error Analysis}

\section{Introduction}
In engineering applications, efficiently solving partial differential equations on complex domains is a critical task. The Finite Element Method (FEM) requires the domain to be discretized by a mesh conforming to its boundary, and generating such a boundary-fitted mesh can be highly challenging when the domain has a complex or evolving geometry, often representing a major computational bottleneck. For this reason, alternative approaches have been developed in the literature to overcome this limitation. Among these, there exists a broad class of unfitted methods, for which representative examples include \cite{burman2025cut,hansbo2002unfitted}, whose main advantage lies in drastically simplifying the grid generation process by relaxing the boundary-conforming constraint.
This latter approach includes the framework of so-called fictitious domain method \cite{Girault1995, GLOWINSKI1994283}. Our approach employs the fictitious domain method for elliptic problems with Dirichlet boundary conditions by combining weak mesh requirements with a stabilized discretization strategy. The resulting formulation satisfies a uniform discrete inf-sup condition and preserves the expected convergence properties through a rigorous error analysis.

The fictitious domain strategy is applied to a wide set of problems, e.g. involving time-dependent or evolving geometries, such as fluid-structure interactions, free-boundary problems, and shape optimization. The central idea behind this method is to embed the domain into a background mesh that is simple to generate, without requiring alignment between the mesh elements and the domain boundary. While this greatly facilitates the handling of moving domains, the trade-off is that enforcing boundary conditions requires specialized techniques. In fact, in order to ensure that the obtained solution also satisfies the original problem, the fulfillment of the boundary conditions must be somehow imposed. Since an exhaustive review of all available approaches for the weak imposition of boundary conditions is beyond the scope of this work, we refer the reader to the recent literature \cite{ chouly2026review} for a comprehensive overview. Among these strategies, the pioneering work of Babu\v{s}ka \cite{Babuska1972/73} laid the foundation for the imposition of boundary conditions by means of Lagrange multipliers.  This idea is adopted in the context of fictitious domain method in \cite{Girault1995,GLOWINSKI1994283}, where elliptic Dirichlet problems were solved using conforming finite elements of degree one on a regular grid. In particular, in \cite{Girault1995} a uniform inf-sup condition was established, albeit under strong mesh assumptions: the mesh of the extended domain is required to be uniform, the boundary mesh must be suitably discretized with respect to the background mesh, with a ratio of about $3$-to-$1$, and the boundary is assumed to have no excessively sharp angles. Under these three conditions, the authors prove the uniform inf-sup stability of the discrete scheme. As a consequence, their method relies on two distinct uniform meshes, governed by two independent parameters that must be coordinated according to this ratio. The objective of our work is to introduce a fictitious domain method for elliptic problems with boundary Dirichlet conditions, relaxing the theoretical restrictive mesh assumptions required in \cite{Girault1995}. Although starting from the same mixed variational formulation and discrete problem structure, this study distinguishes itself through the specific construction of the meshes. First, we allow the background triangulations to be locally quasi-uniform. Furthermore, the mesh defined on the embedded boundary is derived from the trace of the background triangulation and subsequently processed via a coarsening algorithm. The second primary innovation of the proposed work lies in the local enhancement of the standard $\Poly{1}{}$ discrete space, a strategy specifically designed to recover the uniform discrete inf-sup condition. Our approach leverages bubble functions, originally introduced in \cite{arnold1984stable} as stabilizing terms, by adapting them to the fictitious domain framework. The use of these functions allows to prove the uniform discrete inf-sup condition, followed by an a priori error analysis.
In conclusion, a key advantage of the proposed approach is that the construction of the boundary mesh is entirely governed by a single parameter, namely the mesh size of the two-dimensional bulk mesh, thus eliminating the need for any additional independent mesh generation on the boundary. Moreover, despite this simplified meshing strategy, the resulting discrete problem retains a sound mathematical foundation, in the sense that the uniform discrete inf-sup condition is satisfied.

The paper is structured as follows.  In Section~\ref{sec:modelproblem} we present the model problem and its mixed formulation, obtained using the fictitious domain method. Section~\ref{sec:discretization} is dedicated to the discrete problem, including how the domain is discretized and which discrete spaces are used. The well-posedness of our method and error estimates are presented in Section~\ref{sec:stabilityErrorEstimates}. Section~\ref{sec:numericalresults} and Section~\ref{sec:conclusion} are dedicated to numerical experiments and conclusions, respectively. In the \hyperref[appendix]{Appendix}, a detailed proof of Lemma~\ref{Lemma:LowerBound} is presented.

\subsection{Notations}
\label{sec:notations}

We employ standard notation for Sobolev spaces throughout the paper. Let $\genericset \subset \mathbb{R}^2$ be a bounded open domain. For scalar functions $f,g \in L^2(\genericset)$ and vector-valued functions $\mathbf{s}, \mathbf{t} \in [L^2(\genericset)]^2$, the corresponding $L^2(\genericset)$ inner products are denoted by $\scal[\genericset]{f}{g}$ and $\scal[\genericset]{\mathbf{s}}{\mathbf{t}}$, respectively:

\begin{equation*}
    \scal[\genericset]{f}{g} := \int_\genericset fg \operatorname{d} \genericset, \quad \quad \scal[\genericset]{\mathbf{s}}{\mathbf{t}} := \int_\genericset \mathbf{s} \cdot \mathbf{t} \operatorname{d} \genericset .
\end{equation*}
Furthermore, $\norm[\sob{s}{\genericset}]{\cdot}$ and $\seminorm[\sob{s}{\genericset}]{\cdot}$ denote the norm and seminorm of functions in $\sob{s}{\genericset}$, for $s \geq 0$, respectively. By definition, $\sob{0}{\genericset} \equiv \leb{2}{\genericset}$. The dual space of $\sob[0]{1}{\genericset}$ is $\sob{-1}{\genericset}$, and the duality pairing will be denoted by $\langle \cdot, \cdot \rangle$. Finally, we employ fractional Sobolev spaces on the boundary of $\genericset$, adopting the standard notation; the dual space of $\sob{\frac{1}{2}}{\partial \genericset}$ is $\sob{-\frac{1}{2}}{\partial \genericset}$ and the duality pairing is indicated by $\dual{\cdot}{\cdot}$. A possible norm for the dual space is:
\begin{equation*}
\label{eq:NormasuH-1/2}
\norm[\sob{-\frac{1}{2}}{\partial \genericset}]{\mu}   := \sup_{0 \ne\theta \in \sob{\frac{1}{2}}{\partial \genericset}} \frac{\dual{\mu}{\theta}}{\norm[\sob{\frac{1}{2}}{\partial \genericset}]{\theta}}, \quad \forall \mu \in \sob{-\frac{1}{2}}{\partial \genericset}.
\end{equation*}
Throughout this work, for any sufficiently regular subset or curve $\Xi \subseteq \genericset$, the notation $v|_{\Xi}$ denotes either the trace operator or the standard restriction of $v$ to $\Xi$, mapping $H^1(\omega)$ to $L^2(\Xi)$ or $H^1(\Xi)$ respectively, depending on the codimension of the target set.

Given a generic set $E \subset \R^{n}$, $n=1,2$, let us introduce the space $\Poly{k}{E}$ of two-dimensional polynomials of degree up to $k \in \N$ on $E$. 

In the following, every generic triangle $T \subset \mathbb{R}^2$ is regarded as a closed set. The symbol $\mathring{T}$ denotes its interior, while $\partial T$ denotes its boundary. Whenever Sobolev spaces are involved, the standard notation $\sob{k}{T}$ is adopted, with the convention that it actually refers to $\sob{k}{\mathring{T}}$. Moreover, the superscript $\hat{T}$ will be used to indicate the image of $T$ in the reference configuration, following the classical finite element framework. The standard affine mapping $F_T: \hat{T} \to T$ is defined by:
\begin{equation}
\label{eq:affinetransf}
    \bm{x} =F_T (\hat{\bm{x}}) :=\bm{x}_0 + \begin{bmatrix}
        x_1 - x_0 & x_2 - x_0 \\
        y_1 - y_0 & y_2 - x_0 \\
    \end{bmatrix} 
    \begin{bmatrix}
        \hat{x} \\
        \hat{y} \\
    \end{bmatrix} \quad \forall \bm{x} \in T, \forall \bm{\hat{x}} \in \hat{T},
\end{equation}
where $\bm{x}_0:=(x_0, y_0)$, $\bm{x}_1:= (x_1, y_1)$, and $\bm{x}_2:= (x_2, y_2)$ denote the vertices of $T$, ordered in a counterclockwise sense. Finally, the symbol $C$ denotes a generic positive constant independent of the mesh size, whose value may vary in different contexts.

\section{Model problem and Fictitious Domain Formulation}
\label{sec:modelproblem}

Let $D \subset \R^2$ be a bounded domain with Lipschitz boundary $\gamma$, as in \cite{Girault1995, GLOWINSKI1994283}. With $f \in \sob{-1}{D}$ and $ g \in \sob{\frac{1}{2}}{\gamma}$, we consider the following model problem with Dirichlet boundary conditions:
\begin{equation}
    \begin{cases}
       \label{eq:modelproblem}
        \text{Find $u$ in $\sob{1}{D}$ such that} \\
        - \nabla \cdot \left( \nu \nabla u  \right) + \alpha u = f & \text{in } D,\\
        u = g &\text{on } \gamma,\\
    \end{cases}
\end{equation}
where $\nu \in \left[\leb{\infty}{D}\right]^{2 \times 2}$ is a uniformly positive diffusion coefficient, and $\alpha \in \leb{\infty}{D}$ is a non-negative reaction coefficient. For notational simplicity, we shall assume in the sequel that $\nu$ and $\alpha$ are constant. The extension to more general cases follows similar arguments. If we define $V_g := \{ v \in \sob{1}{D} \text{ } | \text{ } v = g \text{ on } \gamma \}$, the variational formulation of Problem~\eqref{eq:modelproblem} reads as:
\begin{equation}
\label{eq:varproblem}
\begin{cases}
\text{Find $u \in V_g$ such that} \\
    \dbilin[D]{u}{v} = \langle f, v \rangle & \forall v \in \sob[0]{1}{D},
\end{cases}
\end{equation}
where $\dbilin[D]{}{} : \sob{1}{D} \times \sob{1}{D} \to \R$ is the following symmetric, continuous, and coercive bilinear form
\begin{equation*}
\dbilin[D]{u}{v} = \scal[D]{ \nu\nabla u}{\nabla v} + \scal[D]{\alpha u}{v} \quad \forall u,v \in \sob{1}{D}.
\end{equation*}
According to \cite{BoffiBrezziFortin2013}, the Problem~\eqref{eq:varproblem} admits a unique solution.

Following \cite{Girault1995,GLOWINSKI1994283}, in order to obtain the fictitious domain formulation of Problem~\eqref{eq:modelproblem}, we embed $D$ into a simpler \textit{box} $\Omega \subset \R^2$ with boundary $\Gamma$, we consider an extension of the forcing term $f$ such as $\tilde{f} \in \leb{2}{\Omega}$, a closed subspace $X$ of $\sob{1}{\Omega}$, equipped with the same norm, and the following bilinear forms:
\begin{equation*}
\dbilin[\Omega]{v}{w}:= \nu \scal[\Omega]{\nabla v}{\nabla w} + \alpha\scal[\Omega]{ v}{w} \quad \forall u,v \in X,
\end{equation*}
and
\begin{equation*}
    \advbilin{w}{   \mu}:=  \dual{w}{\mu} \quad \forall w \in \sob{\frac{1}{2}}{\gamma}, \forall \mu \in  M:=\sob{-\frac{1}{2}}{\gamma}.
\end{equation*}
Typical choices for $X$ are: $\sob{1}{\Omega}$, $\sob[0]{1}{\Omega}$, or $\sob[p]{1}{\Omega} = \{ v \mid v \in \sob{1}{\Omega}, \ v \text{ is periodic on } \partial\Omega \}$ if $\Omega$ is a cartesian product of intervals. With all the previous tools, we are able to consider the following mixed problem:  
\begin{equation}
\label{eq:MixedSystem}
\begin{cases}
    \text{Find a pair $\left( \tilde{u}, \lambda \right)$ in $ X \times M$ such that } \\
     \dbilin[\Omega]{\tilde{u}}{v} + \advbilin{v|_{\gamma}}{\lambda}= \scal[\Omega]{\tilde{f}}{v}   & \forall v \in X, \\
  \advbilin{\tilde{u}|_{\gamma}}{\mu} = \advbilin{g}{\mu} &  \forall \mu \in M.
\end{cases}
\end{equation}

Due to classical results on abstract saddle-point problems \cite{BoffiBrezziFortin2013}, $(\tilde{u}, \lambda) \in X \times M$ is the only solution of Problem~\eqref{eq:MixedSystem} and satisfies $\tilde{u}|_D = u$ and $\lambda = \left[ \nu \frac{\partial \tilde{u}}{\partial n} \right]_\gamma$, i.e. the jump of the co-normal derivative of $\tilde{u}$ across $\gamma$. Moreover, the bilinear form $b$ shall satisfies the so-called continuous inf-sup condition: there exists a constant $\beta > 0$ such that  
\begin{equation}
\label{eq:InfSupContinua}
\sup_{v \in X} \frac{\advbilin{v|_{\gamma}}{\mu}}{\norm[\sob{1}{\Omega}]{v}} \geq \beta \norm[M]{\mu} \quad \forall \mu \in M.
\end{equation}

\section{Discretization}
\label{sec:discretization}
Let $h>0$, $\eta >0$ be two real parameters and $X_h \subset X$, $ M_\eta \subset M$ be two finite dimensional spaces. As in \cite{Girault1995}, assume $M_\eta$ contains at least the constant functions. The discrete version of the Problem~\eqref{eq:MixedSystem} is the following
\begin{equation}
\label{eq:DiscreteMixedProblem}
\begin{cases}
    \text{Find a pair $\left( u_h, \lambda_\eta \right)$ in $ X_h \times M_\eta$ such that } \\
     \dbilin[\Omega]{u_h}{v_h} + \advbilin{v_h|_{\gamma}}{\lambda_\eta}= \scal[\Omega]{\tilde{f}}{v_h}   & \forall v_h \in X_h, \\
  \advbilin{u_h|_{\gamma}}{\mu_\eta} = \advbilin{g}{\mu_\eta} &  \forall \mu_\eta \in M_\eta.
\end{cases}
\end{equation}

It follows from the abstract discretization theory of mixed problems \cite{BoffiBrezziFortin2013} that good error estimates can be established for the solution of Problem~\eqref{eq:DiscreteMixedProblem} if two conditions are fulfilled. Let 
\begin{equation*}
V_h = \{ v_h \in X_h : \advbilin{v_h|_\gamma}{\mu_\eta} = 0 \quad \forall \mu_\eta \in M_\eta \}.
\end{equation*}  
Then $ a_{\Omega} $ must be uniformly coercive on $ V_h $, i.e., must exists a constant $ \kappa^* > 0 $, independent of $ h $ and $ \eta $, such that  
\begin{equation}
\label{eq:ellipticity}
a_{\Omega}(v_h, v_h) \geq \kappa^* \norm[\sob{1}{\Omega}]{v_h}^2 \quad \forall v_h \in V_h,
\end{equation}  
and $ b $ must satisfies a uniform discrete inf-sup condition, i.e., must exists a constant $ \beta^* > 0 $, independent of $ h $ and $ \eta $, such that  
\begin{equation}
\label{eq:inf-sup}
\sup_{v_h \in X_h} \frac{\advbilin{v_h|_{\gamma}}{\mu_\eta}}{\norm[\sob{1}{\Omega}]{v_h}} \geq \beta^* \norm[M]{\mu_\eta} \quad \forall \mu_\eta \in M_\eta.
\end{equation}
The condition in Equation~\eqref{eq:ellipticity} for all nonnegative $\alpha$ and positive $\nu$ is proved in \cite{Girault1995}, while Equation~\eqref{eq:inf-sup} is not inherited by the continuous counterpart. This fact makes the choice of the discrete spaces $X_h$ and $M_\eta$ crucial for well-posedness. Given the difficulty to handle a dual norm, we will use a results from \cite{Fortin1977}: 

\begin{lemma}
\label{Lemma:OperatorediFortin}
    Assume that $ b $ satisfies the continuous inf-sup condition~\eqref{eq:InfSupContinua}. Then, the discrete inf-sup condition~\eqref{eq:inf-sup} holds if and only if there exists a bounded linear operator $ \Pi_h : X \to X_h $ such that: \begin{equation}
\label{eq:FortinOperator1}
\norm[\sob{1}{\Omega}]{ \Pi_h(v)} \leq C \norm[\sob{1}{\Omega}]{ v} \quad \forall v \in X,
\end{equation}  
where $ C > 0 $ is a constant independent of $ h $ and $ \eta $, and  
\begin{equation}
\label{eq:FortinOperator2}
\advbilin{\Pi_h(v)|_{\gamma} - v|_{\gamma}}{ \mu_\eta} = 0 \quad \forall \mu_\eta \in M_\eta.
\end{equation}   
\end{lemma}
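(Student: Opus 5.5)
We prove the two implications separately, working throughout with the duality pairing $\advbilin{\cdot}{\cdot}$ and the norm on $M$ defined as a dual norm in Section~\ref{sec:notations}. We use two standard facts: the trace map $v\mapsto v|_\gamma$ is continuous from $X$ to $\sob{\frac{1}{2}}{\gamma}$, and $M_\eta$ is finite dimensional.

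\emph{Sufficiency.} Assume $\Pi_h$ satisfies \eqref{eq:FortinOperator1}--\eqref{eq:FortinOperator2}, and fix $\mu_\eta\in M_\eta$. By \eqref{eq:FortinOperator2}, $\advbilin{v|_\gamma}{\mu_\eta}=\advbilin{\Pi_h(v)|_\gamma}{\mu_\eta}$ for every $v\in X$. By \eqref{eq:FortinOperator1}, $\norm[\sob{1}{\Omega}]{\Pi_h v}\le C\norm[\sob{1}{\Omega}]{v}$. Since $\Pi_h v\in X_h$, we obtain
\begin{equation*}
\sup_{v_h\in X_h}\frac{\advbilin{v_h|_\gamma}{\mu_\eta}}{\norm[\sob{1}{\Omega}]{v_h}}\ge \sup_{v\in X,\,\Pi_h v\neq 0}\frac{\advbilin{\Pi_h(v)|_\gamma}{\mu_\eta}}{\norm[\sob{1}{\Omega}]{\Pi_h v}}\ge \frac1C\sup_{v\in X}\frac{\advbilin{v|_\gamma}{\mu_\eta}}{\norm[\sob{1}{\Omega}]{v}}\ge\frac{\beta}{C}\norm[M]{\mu_\eta},
\end{equation*}
where the last inequality is \eqref{eq:InfSupContinua}. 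In the middle step, any $v$ with $\Pi_h v=0$ gives a numerator equal to zero, so discarding it does not lower the supremum. Hence \eqref{eq:inf-sup} holds with $\beta^*=\beta/C$.

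\emph{Necessity.} Assume \eqref{eq:inf-sup} holds, and define $B_h:X_h\to M_\eta'$ by $\langle B_h v_h,\mu_\eta\rangle=\advbilin{v_h|_\gamma}{\mu_\eta}$. Condition \eqref{eq:inf-sup} says that the transpose $B_h^T$ is injective with closed range and is bounded below by $\beta^*$. By the finite-dimensional closed range theorem, $B_h$ maps the orthogonal complement $V_h^\perp$ of its kernel $V_h$ (orthogonal in $\sob{1}{\Omega}$) isomorphically onto $M_\eta'$, and the inverse satisfies $\norm[\sob{1}{\Omega}]{w_h}\le \frac{1}{\beta^*}\norm[M_\eta']{B_h w_h}$ for $w_h\in V_h^\perp$. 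Given $v\in X$, consider the functional $\ell_v:\mu_\eta\mapsto \advbilin{v|_\gamma}{\mu_\eta}$. Its norm satisfies $\norm[M_\eta']{\ell_v}\le\norm[\sob{\frac{1}{2}}{\gamma}]{v|_\gamma}\le C_{tr}\norm[\sob{1}{\Omega}]{v}$, where the first inequality follows from the definition of the dual norm on $M$ and the second from the trace theorem. Let $\Pi_h v\in V_h^\perp$ be the unique element with $B_h\Pi_h v=\ell_v$. The map $\Pi_h$ is linear, it satisfies \eqref{eq:FortinOperator2} by construction, and $\norm[\sob{1}{\Omega}]{\Pi_h v}\le C_{tr}\norm[\sob{1}{\Omega}]{v}/\beta^*$, which is \eqref{eq:FortinOperator1} with a constant independent of $h$ and $\eta$.

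The only delicate step is the one in the necessity direction: checking that the bound on the inverse of $B_h$ restricted to $V_h^\perp$ is exactly $1/\beta^*$. This follows from the identity $\norm{B_h w_h}_{M_\eta'}\ge \beta^*\norm{w_h}$ on $V_h^\perp$, which is the dual form of the inf-sup condition in finite dimensions, as in \cite{BoffiBrezziFortin2013}. It amounts to the standard equivalence of the two inf-sup constants, and we cite it rather than prove it.
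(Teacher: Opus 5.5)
The paper gives no proof of this lemma and simply invokes \cite{Fortin1977}. Your argument is correct and is precisely the standard proof of that result (cf.\ \cite{BoffiBrezziFortin2013}): sufficiency gives $\beta^*=\beta/C$, and necessity takes the minimal-norm right inverse of $B_h$ on $V_h^\perp$, with constant $C_{tr}/\beta^*$. (The termwise comparison in your sufficiency chain only holds when $b(v,\mu_\eta)\ge 0$, but since $v$ and $-v$ both range over $X$ this does no harm.)
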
 

The following subsections are devoted to the construction of the meshes 
for $\Omega$ and $\gamma$, as well as to the definition of the discrete 
spaces $X_h$ and $M_\eta$. As highlighted in the introduction, the proposed approach presents two key 
advantages with respect to~\cite{Girault1995}: it extends the framework 
to a broader class of mesh configurations, and it removes the need for 
an independently generated mesh on $\gamma$, which would introduce an 
additional free parameter requiring careful tuning. This is realized by coupling the 
boundary discretization directly to the bulk mesh on $\Omega$. 
This construction exploits the intersections between the background 
triangulation and the embedded boundary to define the discretization 
of $\gamma$, and, combined with a different choice of discrete spaces, 
allows us to prove stability without requiring the specific $1{:}3$ 
mesh ratio assumed in~\cite{Girault1995}.

\subsection{Domain discretization}
\label{subsec:domaindiscretization}

Let us consider a sequence of triangulations $ \left( \mathcal{T}_h \right)_{h>0}$ on $\Omega$ that is shape-regular in the sense of \cite{ciarlet1978finite}. Let us also assume that each triangulation $\mathcal{T}_h$ is locally quasi-uniform in the sense of \cite{CanutoNochetto2024}. To formalize this, for any element $T \in \mathcal{T}_h$, we introduce the discrete neighborhood
\begin{equation}
\label{eq:discreteneighborhood}
     \omega_{\mathcal{T}_h}(T) := \bigcup_{\substack{T' \in \mathcal{T}_h \\ T' \cap T \neq \emptyset}} T'.
\end{equation}
The local quasi-uniformity property then requires that
\[
    \max_{T \in \mathcal{T}_h} \#\omega_{\mathcal{T}_h}(T) \leq C(\sigma), \qquad \max_{T' \subset \omega_{\mathcal{T}_h}(T)} \frac{|T|}{|T'|} \leq C(\sigma), 
    \]
    where $\sigma$ denotes the shape regularity coefficient. We denote by $\mathcal{E}_h$ the set of edges $e$ for all $T \in \mathcal{T}_h$ and by $\mathcal{V}_h$ the set of vertices of $\mathcal{T}_h$. For each triangle $T \in \mathcal{T}_h$, we denote by $h_T$ its diameter, i.e.\ the length of its longest edge. The value $h$ denotes the length of the longest edge $e \in \mathcal{E}_h$, i.e.\ $h := \max_{T \in \mathcal{T}_h} h_T$.

\subsection{Boundary discretization}
    \label{sec:boundarydiscret}
    We assume that the boundary $\gamma$ is polygonal,  composed of a union of $N$ straight segments $ \{  \xi_1, \dots, \xi_N \}$, called \textit{boundary segments}, i.e. $\gamma = \bigcup_{i=1}^N \xi_i. $  See Figure~\ref{fig:boundary_segme}(a) for an example. The extension of our work in case of curved boundary $\gamma$ can be done as in \cite{Girault1995}, see Remark~\ref{rm:curvedboundary}.
Furthermore, for simplicity of presentation, we assume that the corner points of $\gamma$ are not too sharp, i.e., that the angles between consecutive boundary segments are uniformly bounded away from zero, as in \cite{Girault1995}. This assumption is introduced only to simplify the analysis; the results of this paper can be extended to configurations where it is violated. In particular, Remark~\ref{rem:sharp} outlines how the proposed theory can be adapted to handle such cases without loss of validity. 

\begin{figure}
    \centering
    \begin{subfigure}[b]{0.35\linewidth}
        \includegraphics[width=\linewidth]{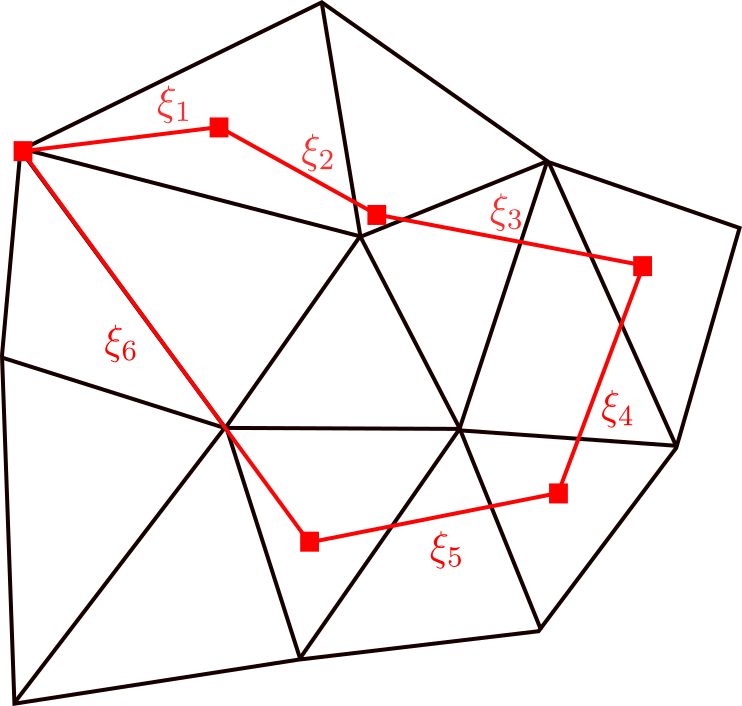}
        \caption{}
    \end{subfigure}
    \hspace{1cm}
    \begin{subfigure}[b]{0.35\linewidth}
        \includegraphics[width=\linewidth]{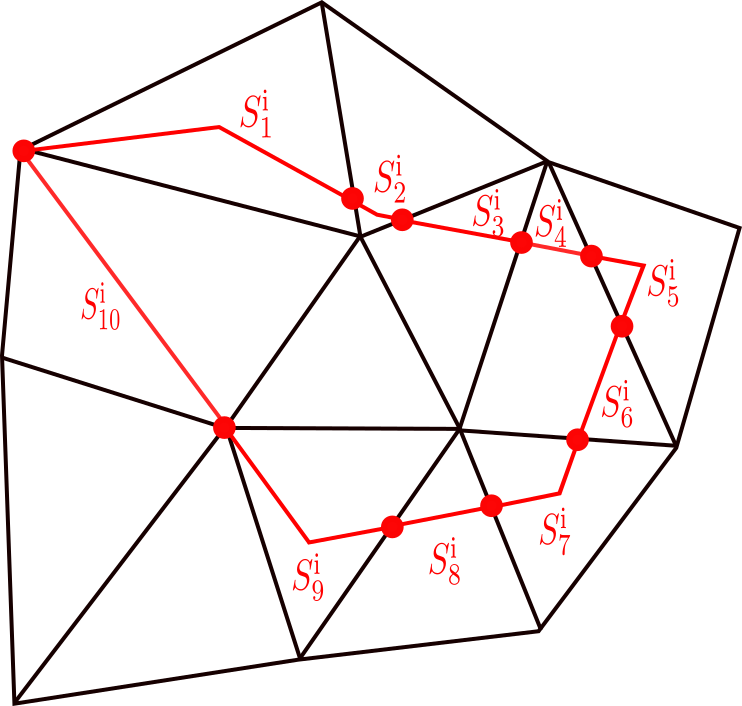}
        \caption{}
    \end{subfigure}
    \caption{An illustration of the initial boundary discretization, where a red polygon boundary is embedded in a black triangulation. Figure~(a) shows the initial segmentation of the boundary, consisting solely of the boundary segments $\xi_1, \dots, \xi_6$. Figure~(b) displays the discretization $\Sinit$ of the boundary induced by the triangulation. The endpoints of \textit{boundary segments} are marked with red squares, those of elements of $\Sinit$ with red circles.} 
    \label{fig:boundary_segme}
\end{figure}

To discretize $\gamma$, we start from the discretization naturally \emph{induced} $\Sinit$ by the triangulation $\mathcal{T}_h$: the available geometric information consists of the \textit{boundary segments}, and $\mathcal{T}_h$ induces a first tentative discretization of $\gamma$ by grouping together all the portions of the boundary lying in the same triangle, as shown in Figure~\ref{fig:boundary_segme}(b). Each element of $\Sinit$ is therefore a finite union of portion of \textit{boundary segments}. In the following, the elements of $\Sinit$ will be denoted with a superscript $^\text{i}$. Moreover, the following property holds: 
\begin{equation}
\label{eq:OgniScontenutoinunT}
    \forall \SegmentInitial \in \Sinit, \quad  \exists !  \ T \in \mathcal{T}_h : \SegmentInitial \cap \mathring{T} \ne \emptyset \quad  \oplus \quad \exists!e \in \mathcal{E}_h: \SegmentInitial \equiv e   .
\end{equation}
The symbol $\oplus$ indicates the XOR. This induced discretization may contain very small elements that can cause instabilities in the discrete formulation by degrading the conditioning of the resulting linear system. For this reason, the induced discretization is refined through a coarsening procedure, which first classifies its elements and then merges together those that are too small, so that every element of the resulting discretization satisfies the desired minimum size constraint. We now turn to the description of this classification.

\begin{figure}
    \centering
    \includegraphics[width=0.4\linewidth]{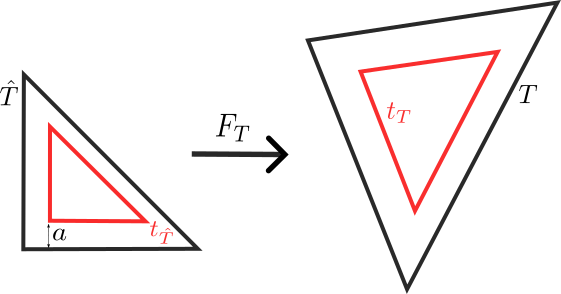}
    \caption{Affine map $F_T:\hat{T}\to T$ from the reference element $\hat{T}$ to a generic physical triangle $T$. The sub-triangle $t_{\hat{T}}\subset\hat{T}$, related to parameter $a\in [0, \frac{1}{3}$), is mapped onto $t_T = F_T(t_{\hat{T}})\subset T$, illustrating how the sub-triangulation is created.}
\label{fig:reference_triangle}
    \label{fig:TriangRef}
\end{figure}

We classify each element $\SegmentInitial \in \Sinit$ as \textit{definitive} or \textit{discarded}, according to whether it is already large enough or not. To this end, to each triangle $T \in \mathcal{T}_h$ we associate a smaller, scaled copy $t_T \subset T$, as illustrated in Figure~\ref{fig:reference_triangle}, and we use the intersection between the elements $\SegmentInitial \in \Sinit$ and these sub-triangles to decide the classification of $\SegmentInitial$. For each triangle $T \in \mathcal{T}_h$, the sub-triangle $t_T$ is defined as
\begin{equation}
\label{eq:subtriangle}
    t_T := \{ \bm{x} = F_T(g(\hat{\bm{x}})) \mid \hat{\bm{x}} \in \hat{T} \} \subset T,
\end{equation}
where $F_T$ is the map from the reference triangle $\hat{T}$ to $T$, and $g : \hat{T} \to \hat{T}$ is the scaling transformation
\begin{equation}
\label{eq:TrasformScaling}
    g(\hat{\bm{x}}) :=(1-3a)\hat{\bm{x}}+\begin{bmatrix}
        a \\
        a \\
    \end{bmatrix} ,
\end{equation}
with $a \in \R$ chosen in the interval $[0, \frac{1}{3})$. With this definition, $t_T$ is a scaled copy of $T$, with size depending on $a$, that lies inside $T$, is similar to $T$, and shares the same barycenter. The elements of $\Sinit$ are processed sequentially: an element is accepted if it either intersects the boundary of a subtriangle not considered yet or coincides with an edge of $\mathcal{T}_h$; otherwise, the classification is updated by discarding the conflicting elements. To keep track of this process, we introduce three sets, namely $\mathcal{T}^I_h$, $\mathcal{T}^e_h$, and $\mathcal{E}_h^e$, initially empty and progressively filled, as each element of $\Sinit$ is processed, with the triangles and edges of the background mesh. The complete classification procedure is summarized in Algorithm~\ref{alg:classification}.
The output of the algorithm consists of a partition of the elements $S$ of $\Sinit$ into \emph{definitive} and \emph{discarded} elements, together with the sets $\mathcal{T}^I_h$ of triangles containing exactly one \emph{definitive} element in their interior, $\mathcal{T}^e_h$ of triangles having a \emph{definitive} element coinciding with one of their edges, and $\mathcal{E}_h^e$ of mesh edges that coincide with a \emph{definitive} element of $\Sinit$.

\begin{algorithm}[h]
\caption{Classification of the elements of $\Sinit$}\label{alg:classification}

\KwIn{The set of elements $\Sinit$, together with the empty sets $\mathcal{T}_h^I$, $\mathcal{T}_h^e$, and $\mathcal{E}_h^e$.}

\For{each $\SegmentInitial \in \Sinit$}{
    \eIf{$\SegmentInitial$ coincides with an edge $e \in \mathcal{E}_h$ and the neighboring triangles do not belong to $\mathcal{T}_h^I$}{
        Classify $\SegmentInitial$ as \emph{definitive}\;
        Add $e$ to $\mathcal{E}_h^e$\;
        Add the triangles sharing $e$ to $\mathcal{T}_h^e$\;
    }{
        \eIf{$\SegmentInitial \cap \partial t_T \neq \emptyset$ and $T \notin \mathcal{T}_h^I \cup \mathcal{T}_h^e$}{
            Classify $\SegmentInitial$ as \emph{definitive}\;
            Add $T$ to $\mathcal{T}_h^I$\;
        }{
            Classify $\SegmentInitial$ as \emph{discarded}\;
            \ForAll{$T \in \mathcal{T}_h^I \cup \mathcal{T}_h^e$ intersected by $\SegmentInitial$}{
                Let $\tilde{\SegmentInitial}$ be the \textit{definitive} element that caused $T$ to be included in $\mathcal{T}_h^I\cup \mathcal{T}_h^e$\;
                Reclassify $\tilde{\SegmentInitial}$ as \emph{discarded}\;
                Remove $T$ from $\mathcal{T}_h^I$ or $\mathcal{T}_h^e$\;
                \If{$T \in \mathcal{T}_h^e$}{
                    Remove the corresponding edge from $\mathcal{E}_h^e$\;
                }
            }
        }
    }
}

\KwOut{The partition of $\Sinit$ into \emph{definitive} and \emph{discarded} elements, together with the sets $\mathcal{T}_h^I$, $\mathcal{T}_h^e$, and $\mathcal{E}_h^e$.} 

\end{algorithm}

\begin{figure}
    \centering 
    \begin{subfigure}[b]{0.35\linewidth}
        \includegraphics[width=\linewidth]{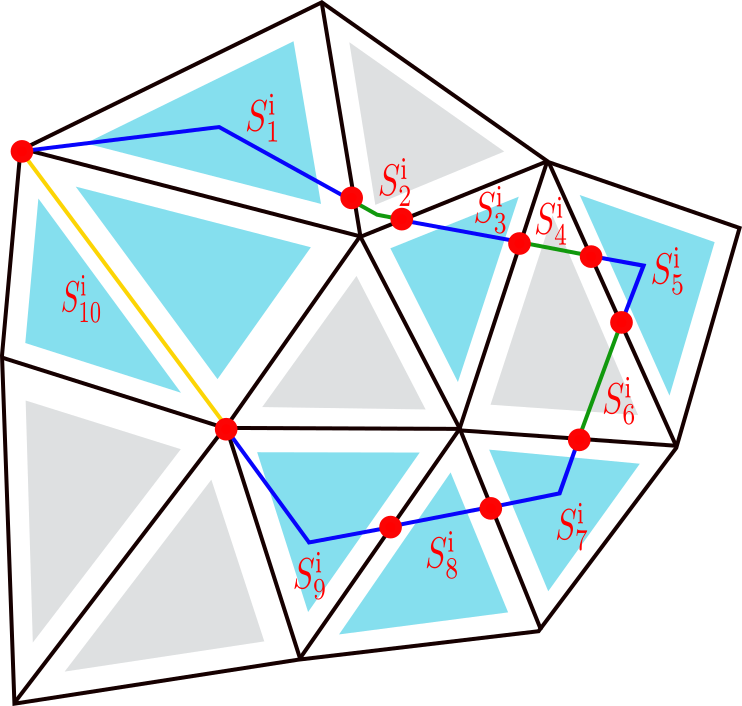}
        \caption{}
    \end{subfigure}
 \hspace{1cm}
    \begin{subfigure}[b]{0.35\linewidth}
        \includegraphics[width=\linewidth]{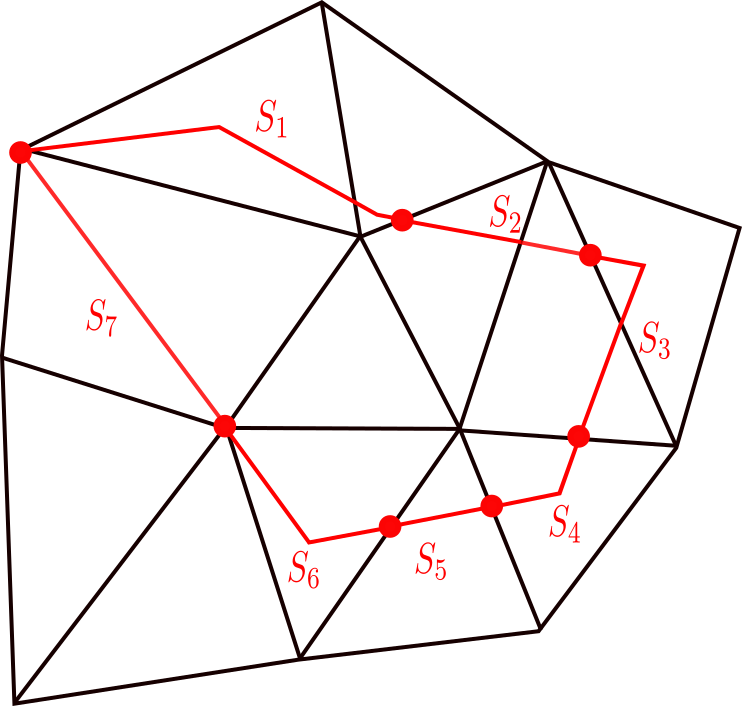}
        \caption{}
    \end{subfigure}
    \caption{An illustration of the final steps of the boundary discretization procedure, applied to the same example as in Figure~\ref{fig:boundary_segme}. In each figure, the endpoints of the 1D elements are marked with red dots. Figure~(a) illustrates the outcome of the classification algorithm: the elements of $\Sinit$ classified as \textit{definitive} are shown in blue, those classified as \textit{discarded} in green, and those classified as \textit{definitive} but coinciding with an edge of the triangulation in yellow. The internal subtriangles belonging to $\mathcal{T}^I_h \cup \mathcal{T}_h^e$ are colored in light blue, while the remaining ones are shown in grey. Finally, Figure~(b) presents the final discretization $\mathcal{S}_\eta$.}
    \label{fig:boundary_segme2}
\end{figure}

Let us analyze the example reported in Figure~\ref{fig:boundary_segme2}(a). The internal subtriangles are colored in blue if they belong to $\mathcal{T}^I_h \cup \mathcal{T}_h^e$, and in grey otherwise. The \textit{definitive} elements are $\SegmentInitial_1, \SegmentInitial_3, \SegmentInitial_5, \SegmentInitial_7, \SegmentInitial_8, \SegmentInitial_9$, and $\SegmentInitial_{10}$, where the latter is an example of an element coinciding with an edge of the triangulation $\mathcal{T}_h$. On the other hand, $\SegmentInitial_2$ is \textit{discarded} as it does not intersect any internal subtriangle, while $\SegmentInitial_4$ and $\SegmentInitial_6$ are \textit{discarded} despite intersecting an internal subtriangle, since they both share the same triangle. This situation illustrates the application of lines 12--19 of Algorithm~\ref{alg:classification}, which is also described in detail in the following two examples.

\begin{figure}[ht]
    \centering
    \begin{subfigure}[b]{0.23\linewidth}
        \includegraphics[width=\linewidth]{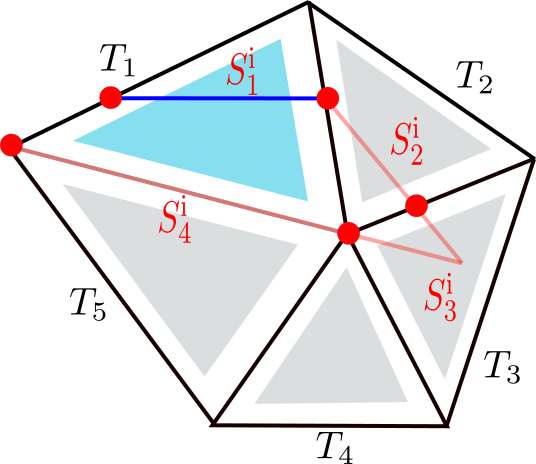}
        \caption{$\mathcal{T}^I_h = \{ T_1 \}$.}
    \end{subfigure}
    \hfill
    \begin{subfigure}[b]{0.23\linewidth}
        \includegraphics[width=\linewidth]{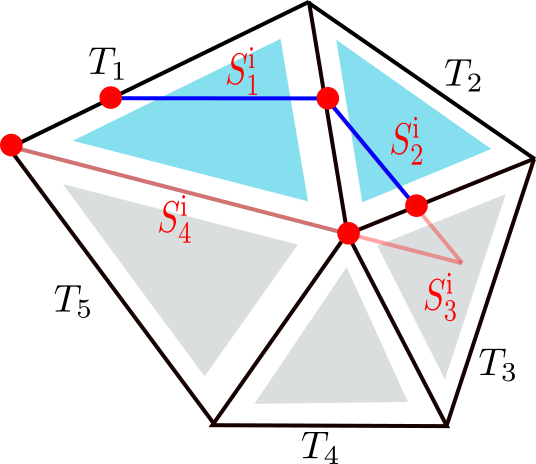}
        \caption{$\mathcal{T}^I_h = \{ T_1, T_2 \}$.}
    \end{subfigure}
    \hfill
    \begin{subfigure}[b]{0.23\linewidth}
        \includegraphics[width=\linewidth]{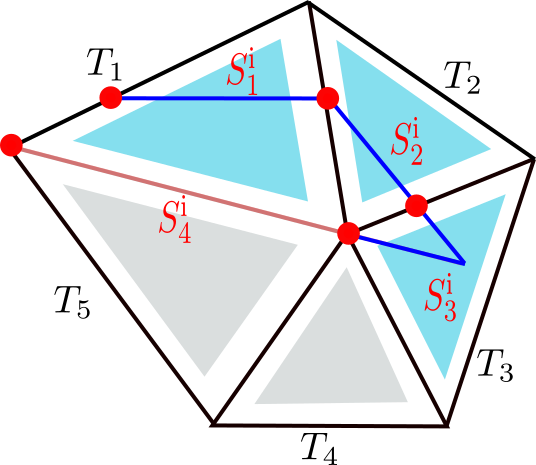}
        \caption{$\mathcal{T}^I_h = \{ T_1, T_2, T_3 \}$.}
    \end{subfigure}
    \hfill
    \begin{subfigure}[b]{0.23\linewidth}
        \includegraphics[width=\linewidth]{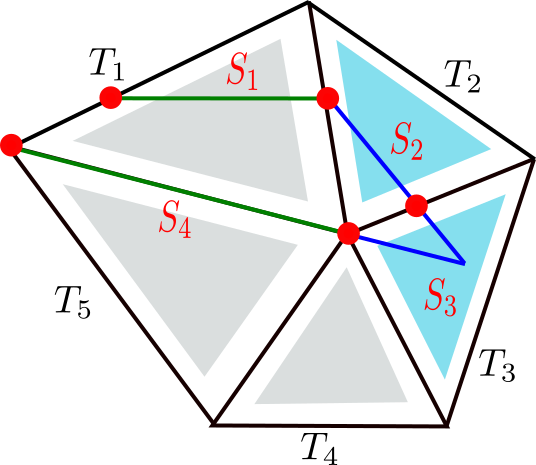}
        \caption{$\mathcal{T}^I_h = \{ T_1, T_2 \}$.}
    \end{subfigure}
    \caption{Illustration of Example~\ref{ex:Example2}. The elements of $\Sinit$ are processed sequentially, and the evolution of this process is described from Figure (a) to Figure (d). In each figure, the endpoints of the 1D elements are marked with a red dot. The elements currently classified as \emph{definitive} are shown in blue, those not yet processed are shown in semi-transparent red, while those classified as \emph{discarded} are shown in green. The triangles belonging to $\mathcal{T}_h^I \cup \mathcal{T}_h^e$ are highlighted in light blue, whereas the remaining triangles are shown in grey.}
    \label{fig:Esempio1}
\end{figure}

\begin{example}
\label{ex:Example2}
    The following example illustrates a situation in which a previously accepted element is reclassified as \emph{discarded}. Figure~\ref{fig:Esempio1} shows the successive steps of the algorithm, with each panel corresponding to one iteration. In Figure~\ref{fig:Esempio1}(a), the element $\SegmentInitial_1$ is processed first: since it intersects the boundary of an internal subtriangle that is not associated with any previously accepted element, $\SegmentInitial_1$ is classified as \emph{definitive}, and the corresponding triangle is added to $\mathcal{T}_h^I$. In Figure~\ref{fig:Esempio1}(b), the element $\SegmentInitial_2$ is processed and classified as \emph{definitive}, leading to the inclusion of the corresponding triangle in $\mathcal{T}_h^I$. Next, as shown in Figure~\ref{fig:Esempio1}(c), the element $\SegmentInitial_3$ is processed, classified as \emph{definitive} and its corresponding triangle is included in $\mathcal{T}_h^I$. When $\SegmentInitial_4$ is processed (Figure~\ref{fig:Esempio1}(d)), it coincides with an edge of the background triangulation for which one of the neighboring triangle belongs to $\mathcal{T}_h^I$. According to Algorithm~\ref{alg:classification} (lines 12--19), $\SegmentInitial_4$ is classified as \emph{discarded}, the conflicting triangle is removed from $\mathcal{T}_h^I$, and the previously accepted element $\SegmentInitial_1$ is also reclassified as \emph{discarded}. This example illustrates how the algorithm resolves conflicts by removing both the newly processed element and the previously accepted one.
\end{example}

\begin{figure}
    \centering 
    \begin{subfigure}[b]{0.15\linewidth}
        \includegraphics[width=\linewidth]{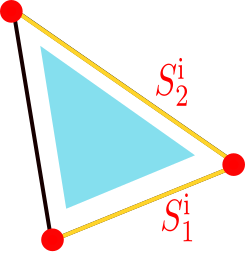}
        \caption{}
    \end{subfigure}
 \hspace{1cm}
    \begin{subfigure}[b]{0.15\linewidth}
        \includegraphics[width=\linewidth]{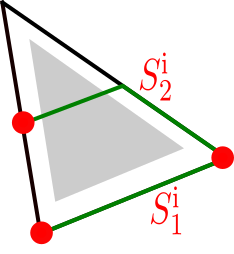}
        \caption{}
    \end{subfigure}
    \caption{Illustration of Example~\ref{ex:Example1}. The elements of $\Sinit$ classified as \emph{definitive} coinciding with an edge of the triangulation are shown in yellow, those classified as \emph{discarded} in green. The triangles belonging to $\mathcal{T}_h^I \cup \mathcal{T}_h^e$ are highlighted in light blue, while the remaining triangles are shown in grey.}
    \label{fig:Esempio2}
\end{figure}

\begin{example}
    \label{ex:Example1}
    The example in Figure~\ref{fig:Esempio2} illustrate two special configurations, where elements of $\Sinit$ are related with edges of the background triangulation. In Figure~\ref{fig:Esempio2}(a), the two elements $\SegmentInitial_1$, $\SegmentInitial_2$ of the induced mesh each coincide with an edge of the same background triangle. Both elements are accepted as \emph{definitive}, and the involved triangle is stored in $\mathcal{T}^e_h$. Conversely, Figure~\ref{fig:Esempio2}(b) shows a different configuration: the element $\SegmentInitial_1$ is processed first and classified as \emph{definitive}, causing the corresponding edge and its neighboring triangles to be added to $\mathcal{E}_h^e$ and $\mathcal{T}_h^e$, respectively. When $\SegmentInitial_2$ is subsequently processed, since the background triangle already belongs to $\mathcal{T}_h^e$, according to Algorithm~\ref{alg:classification} (lines 12--19), $\SegmentInitial_2$ is classified as \emph{discarded}. Moreover, the previously accepted element $\SegmentInitial_1$ is reclassified as \emph{discarded}, and the corresponding edge and neighboring triangles are removed from $\mathcal{E}_h^e$ and $\mathcal{T}_h^e$, respectively.
\end{example}

After the classification process, we are able to define the definitive discretization of $\gamma$, denoted by $\mathcal{S}_\eta$, where $\eta$ is its mesh-size. Each group of consecutive \textit{discarded} elements of $\Sinit$ is first merged together, and the resulting element is treated in one of two ways: if its length exceeds $3h$, where $h$ is the local mesh size of the intersected triangles, it is included in $\mathcal{S}_\eta$ as a new element, and also collected in the subset $\Sdiscarded \subseteq \mathcal{S}_\eta$, used later for classification purposes; otherwise, it is further merged into the preceding \textit{definitive} element, forming a single element of $\mathcal{S}_\eta$. All remaining \textit{definitive} elements of $\Sinit$ that are not involved in such a merging are included in $\mathcal{S}_\eta$ unchanged.
The reason for adopting this agglomeration criterion will be clarified in the proof of Proposition~\ref{Proposition:proprietàfunzionibolla}. Furthermore, some elements $S \in \mathcal{S}_{\eta}$ may now have a nonempty intersection with more than one triangle $T \in \mathcal{T}_h$. Consequently, Equation~\eqref{eq:OgniScontenutoinunT} is replaced by:
\begin{equation}
\label{eq:Scontenutonelsottotriangolo}
    \forall S \in \mathcal{S}_{\eta} \Big( \exists !   T \in \mathcal{T}^I_h \ \text{ such that } S \cap \partial t_T \ne \emptyset \quad \oplus \quad \exists! e \in \mathcal{E}^e_h : e \subset S \quad \oplus \quad S \in \Sdiscarded \Big).
\end{equation}

Let us focus on the example proposed in Figure~\ref{fig:boundary_segme2}(b). It shows various elements $S \in \mathcal{S}_\eta$ with different configurations: those entirely contained within a single triangle (e.g., $S_5$); those resulting from the agglomeration of multiple elements where at least one was previously categorized as a \textit{definitive} element, such as $S_3$; and those that coincide with an edge, such as $S_7$. Furthermore, we can deduce how the choice of $a$ impacts onto the discretization of $\gamma$: increasing the value of $a$ correspond to coarsening the mesh for boundary.

It is important to note that, in most cases, the set $\Sdiscarded$ is not actually generated by the algorithm, as it arises only for sufficiently large values of $a$, i.e.\ when the sub-triangles $t_T$ are small enough that several consecutive elements $\SegmentInitial$ fail to intersect any of them and are therefore all classified as \textit{discarded}.

\subsection{Discrete spaces}
\label{subsec:DiscreteSpaces}
We assume that $X = \sob{1}{\Omega}$ as in \cite{Girault1995}. The discrete space $X_h$ is built upon three different spaces: the objective is to enrich the classical $\Poly{1}{}$-finite element space with additional functions that will be used to define the operator introduced in the Lemma~\ref{Lemma:OperatorediFortin}. Notably, this 
enrichment is restricted to a specific subset of triangles, specifically 
those intersected by the boundary $\gamma$. It is precisely for this reason 
that the sets $\mathcal{T}^I_h, \mathcal{E}^e_h,$ and $\mathcal{T}^e_h$ were formally defined in Section~\ref{subsec:domaindiscretization}

Let us consider $T \in \mathcal{T}^I_h$ and the set $\{\varphi_i^T \}_{i=1}^3$ of local linear finite element basis function on $T$. Then, we can define the \textit{internal bubble function} in $T$ as \begin{equation} \label{eq:Bubble3} b_{h}^T(\bm{x}) = 27 \varphi_1^T(\bm{x})\varphi_2^T (\bm{x})\varphi_3^T(\bm{x}) \quad \forall \bm{x} \in T. \end{equation}
This function belongs to $ \Poly{3}{T} \cap \sob[0]{1}{T}$.
The space of \textit{internal bubble functions} is \begin{equation*} B^I_h = \bigoplus_{T\in \mathcal{T}^I_h} \operatorname{span}{ b_h^T } \subset X. \end{equation*} 

Now, let us consider an edge $e \in \mathcal{E}_h^e$, with $T_1, T_2 \in \mathcal{T}^e_h$ the two triangles sharing $e$, and let $v_1, v_2 \in \mathcal{V}_h$ denote its two endpoints. For $k=1,2$, let $\{\varphi_i^{T_k}\}_{i=1}^3$ denote, as above, the set of local linear finite element basis functions on $T_k$, and let $\varphi_1^{T_k}$ and $\varphi_2^{T_k}$ be the two basis functions associated with $v_1$ and $v_2$, respectively. The \textit{edge bubble function} $b_h^e$ is then defined element-wise as
\begin{equation}
\label{eq:Bubble2}
b_h^e(\bm{x}) =
\begin{cases}
4\, \varphi_1^{T_1}(\bm{x}) \, \varphi_2^{T_1}(\bm{x}), & \bm{x} \in T_1, \\[4pt]
4\, \varphi_1^{T_2}(\bm{x}) \, \varphi_2^{T_2}(\bm{x}), & \bm{x} \in T_2, \\[4pt]
0, & \text{otherwise},
\end{cases}
\end{equation}
This function belongs to $C^0\left( T_1 \cup T_2 \right)$. The space of \textit{edge bubble functions} is \begin{equation*}
  B^e_h = \bigoplus_{ e \in \mathcal{E}^e_h} \operatorname{span}\{ b^e_h \} \subset X .
\end{equation*}

The final discrete space is
\begin{equation*}
    X_h = \{ v_h \in C^0(\Omega):v_{h }|_T \in \Poly{1}{T} \quad \forall T \in \mathcal{T}_h \}    \oplus  B^I_h \oplus B^e_h.
\end{equation*}

Then, we set
\begin{equation}
\label{eq:M_eta}
M_\eta = \{\mu_\eta \in \leb{2}{\gamma}: \mu_{\eta }|_S \in \Poly{0}{S} \quad \forall S \in \mathcal{S}_\eta \}.
\end{equation}

\begin{remark}
    \label{rm:curvedboundary}
    The extension of our method to curved boundaries is straightforward. We use a polygonal line to approximate the boundary, in order to apply the fictitious domain method described in our work. This approximation is created connecting the points of intersection between $\gamma$ and $\mathcal{E}_h$. The analysis in \cite{Girault1995} shows that replacing the curved boundary with a suitably chosen polygonal approximation does not affect the convergence order, since we look for a (enhanced) linear approximation of the solution.
\end{remark}

\section{Stability and error estimates}
\label{sec:stabilityErrorEstimates}
    The objective is to define the operator involved in Lemma~\ref{Lemma:OperatorediFortin}, following the construction in \cite{Girault1995}. Let $R_h$ be the Cl\'ement quasi-interpolation operator, associated with $X_h$, introduced in \cite{clement1975approximation}. Recall that for any $ v $ in $ X $, $ R_h(v) $ belongs to $ X_h $, and $ R_h $ satisfies the following local error estimates for $ m = 1,2 $:
\begin{equation}
\label{eq:estimates on the clement operator}
\begin{aligned}
    \forall T \in \mathcal{T}_h, \forall v \in \sob{m}{\omega_{\mathcal{T}_h}(T)}, \quad & \norm[\leb{2}{T}]{ R_h(v) - v } \leq C h_T^m \seminorm[\sob{m}{\omega_{\mathcal{T}_h}(T)}]{v}, \\
    & \seminorm[\sob{1}{T}]{R_h(v)-v} \leq C h_T^{m-1} \seminorm[\sob{m}{\omega_{\mathcal{T}_h}(T)}]{v},
\end{aligned}
\end{equation}
where $\omega_{\mathcal{T}_h}(T)$ is defined in Equation~\eqref{eq:discreteneighborhood}. We derive the following result:
\begin{proposition}
\label{Proposition:proprietàfunzionibolla}
    For each $S \in \mathcal{S}_\eta$, exists $\psi_S \in X_h$ such that:
    \begin{equation}
\label{eq:positiviàintegrale}
    \int_S\psi_S|_S  >0,
\end{equation}
and simultaneously
\begin{equation}
\label{eq:BolleSupportoNulloInAltriSegmenti}
   \int_S \psi_{S'}|_S =0 \quad \forall S' \in \mathcal{S}_\eta : S' \ne S.
\end{equation}
\end{proposition}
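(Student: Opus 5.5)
We build $\psi_S$ separately for each of the three mutually exclusive cases of \eqref{eq:Scontenutonelsottotriangolo}, using in each case a function supported in a small region that meets $\gamma$ only inside $S$.

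\emph{Case 1: $S \cap \partial t_T \neq \emptyset$ for a unique $T\in\mathcal{T}_h^I$.} Set $\psi_S := b_h^T \in B^I_h$. Since $b_h^T$ vanishes outside $T$, it suffices to show two things. First, $\gamma\cap \mathring T \subset S$. Second, $\int_S b_h^T>0$.

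For the first claim, $T\in\mathcal{T}_h^I$ means that exactly one definitive element $\SegmentInitial$ lies in $\mathring{T}$, and by \eqref{eq:OgniScontenutoinunT} this element collects all of $\gamma\cap\mathring T$. The merging step only enlarges definitive elements, so $\gamma\cap\mathring T\subset S$. Moreover, no other $S'$ meets $\mathring T$. A discarded chunk passing through $T$ would be part of $\gamma\cap\mathring T$, which is already contained in $S$. An edge element $S'\supset e$ with $e\subset\partial T$ meets $T$ only on $\partial T$, where $b_h^T=0$; and in any case lines 12--19 of Algorithm~\ref{alg:classification} exclude the simultaneous presence of $T$ in $\mathcal{T}_h^I$ and $\mathcal{T}_h^e$. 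Hence $\int_{S'}b_h^T=0$ for all $S'\neq S$.

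For the second claim, $b_h^T\ge0$ on $T$, and $b_h^T$ is strictly positive on $\mathring T$. Since $S$ meets $\partial t_T$, and $t_T\subset\mathring T$ when $a>0$, the straight pieces of $S$ pass through the open set $\mathring T$ on a set of positive length. Therefore $\int_S b_h^T>0$. If $a=0$, so that $t_T=T$, we instead use that $\SegmentInitial\cap\mathring T\ne\emptyset$ by \eqref{eq:OgniScontenutoinunT}.

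\emph{Case 2: $e\subset S$ for a unique $e\in\mathcal{E}_h^e$.} Set $\psi_S:=b_h^e$, which is supported in $T_1\cup T_2$, is nonnegative, and is positive on the relative interior of $e$. Its trace on $e$ is $4\lambda(1-\lambda)$ along the edge, so $\int_S b_h^e \ge \int_e b_h^e = \tfrac{2}{3}|e|>0$. To get \eqref{eq:BolleSupportoNulloInAltriSegmenti}, we argue that $\gamma\cap(\mathring T_1\cup\mathring T_2)=\emptyset$. If some element crossed the interior of $T_1$ or $T_2$, it would be classified definitive in $\mathcal{T}_h^I$ or discarded. In either situation the conflict resolution of lines 12--19 would have removed $e$ from $\mathcal{E}_h^e$; Example~\ref{ex:Example1}(b) is exactly this scenario. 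The only remaining contact of $\gamma$ with $T_1\cup T_2$ is through other edges of $T_1,T_2$, and on those edges $b_h^e$ vanishes except on $e$ itself. Hence any $S'\ne S$ meets the support of $b_h^e$ only on a set where $b_h^e=0$ (a vertex, or an edge of $T_k$ other than $e$).

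\emph{Case 3: $S\in\Sdiscarded$.} Here no bubble is available, so $\psi_S$ must be a $\Poly{1}{}$ hat function or a combination of them; this is the step I expect to be the main obstacle. By construction $|S|>3h$. I would pick a vertex $v\in\mathcal V_h$ of a triangle crossed by the middle part of $S$ such that the support $\omega_v$ of the hat function $\varphi_v$ meets $\gamma$ only inside $S$. This is where the threshold $3h$ enters. Since $\operatorname{diam}(\omega_v)\le 2h$ and, with local quasi-uniformity, $|S|>3h$, a vertex near the midpoint of $S$ has $\omega_v\cap\gamma\subset S$. This holds provided $\gamma$ does not fold back into $\omega_v$, which is guaranteed by the non-sharp-angle assumption on $\gamma$ (cf.\ Remark~\ref{rem:sharp}). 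Then $\varphi_v\ge0$, and $\varphi_v>0$ on the portion of $S$ crossing $\mathring\omega_v$, which gives \eqref{eq:positiviàintegrale}.

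For the orthogonality \eqref{eq:BolleSupportoNulloInAltriSegmenti} across all cases, it remains to check that the bubbles of cases 1 and 2 are not supported on $S\in\Sdiscarded$. This holds because triangles crossed by discarded chunks are never in $\mathcal{T}_h^I\cup\mathcal{T}_h^e$, by lines 12--19. Finally, $\varphi_v$ vanishes on every other $S'$, because $\omega_v\cap\gamma\subset S$.
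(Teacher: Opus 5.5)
Your case split and your choices of $\psi_S$ match the paper's: $b_h^{T}$ for $T\in\mathcal{T}_h^I$, $b_h^{e}$ for $e\in\mathcal{E}_h^e$, and a $\Poly{1}{}$ hat function at a vertex for $S\in\Sdiscarded$. Your positivity arguments match as well. The paper, however, delegates the third case to the Girault--Glowinski macro-element properties \ref{property:bolleGG1}--\ref{property:bolleGG3}, and it only asserts \eqref{eq:BolleSupportoNulloInAltriSegmenti} ``by construction''. In the two places where you supply your own argument instead, the argument does not hold as written.

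\textbf{Case 3.} The bounds you quote do not give $\omega_v\cap\gamma\subset S$.
The condition $|S|>3h$ leaves only arclength $>\tfrac{3}{2}h$ on each side of the midpoint $p$. But for a vertex $v$ of the triangle containing $p$, the patch $\omega_v$ reaches up to $|p-v|+\max_{T\ni v}h_T\le 2h$ from $p$. Under mere local quasi-uniformity it can reach further, since the $h$ of the $3h$ rule is only the size of the intersected triangles.
So even for straight $\gamma$, a vertex ``near the midpoint'' may have $\varphi_v>0$ on a neighbouring element. Finding a suitable $v_S$ needs a finer geometric choice, which is what the Girault--Glowinski lemma provides. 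Also, bounded corner angles only control consecutive segments of $\gamma$. Keeping non-adjacent parts of $\gamma$ out of $\omega_v$ additionally requires $h$ small relative to the polygon.

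\textbf{Case 2.} Here, and in your closing claim that triangles crossed by discarded pieces never lie in $\mathcal{T}_h^I\cup\mathcal{T}_h^e$ (true for $\mathcal{T}_h^I$, not for $\mathcal{T}_h^e$), you implicitly assume Algorithm~\ref{alg:classification} is order independent. It is not. Lines 12--19 act only on triangles already stored in $\mathcal{T}_h^I\cup\mathcal{T}_h^e$, and the edge test checks only $\mathcal{T}_h^I$.
\begin{itemize}
\item Suppose the piece of $\gamma$ crossing $\mathring{T}_1$ misses $\partial t_{T_1}$ and is processed before $e$. It is then discarded without leaving any trace.
\item Afterwards $e$ is accepted, and $T_1\in\mathcal{T}_h^e$.
\item If that piece ends up in some $S'\neq S$, for instance merged into the preceding definitive element or part of an element of $\Sdiscarded$, then $\int_{S'}b_h^e>0$. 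This contradicts \eqref{eq:BolleSupportoNulloInAltriSegmenti}.
\end{itemize}
Closing this needs an extra rule, which the paper's ``by construction'' also glosses over. Two options are processing all edge elements first, or letting the edge test also reject $e$ when a neighbouring triangle already contains a discarded element.

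Your Case 1 argument that $b_h^{T}$ vanishes on every $S'\neq S$ is fine, since $\gamma\cap\mathring{T}$ is a single element of $\Sinit$.
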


\begin{proof}
Let us consider $S \in \mathcal{S}_\eta $. Using Equation~\eqref{eq:Scontenutonelsottotriangolo}, we know that $S$ must be related to a triangle, an edge or $S \in \Sdiscarded$. The key idea relies on the use of different functions:
\begin{itemize}
    \item if exists $T \in \mathcal{T}^I_h \ \text{ such that } S \cap \partial t_{T} \ne \emptyset$, then $\psi_S$ is chosen in $B^I_h$ as the \textit{internal bubble function} related to $T$. This triangle is now denoted as $T_S$, and $\psi_S:=b_h^{T_S}$. The Equation~\eqref{eq:positiviàintegrale} is satisfied since the function is positive in $\mathring{T}_S$, equal to zero in $\Omega \setminus \mathring{T}_S $ and $S \cap \mathring{T}_S \ne \emptyset$;
    \item if exists $e \in \mathcal{E}^e_h $ such that $ e \subset S $, $\psi_S$ is chosen in $B^e_h$ as $b_h^e$, the \textit{edge bubble function} related to $e$. The Equation~\eqref{eq:positiviàintegrale} is satisfied since the function is positive in $\mathring{e}$, non-negative elsewhere and $\mathring{e} \subset S$. In this case, one of the two triangles that shares $e$ is denoted as $T_S$;
    \item if $S \in \Sdiscarded$, adapting the reasoning of \cite{Girault1995} to our setting, we can find a node $v_S \in \mathcal{V}_h$ such that the macro-element $\Delta_S$ consisting of the multiple triangles of $\mathcal{T}_h$ satisfies the following properties:
\begin{enumerate}[label={\textbf{P.\arabic*}}]
    \item\label{property:bolleGG1} $S$ intersects at least one interior segment $s$ of $\Delta_S$ in a point distant from $v_S$ less than the half of the length of $S$;
    \item\label{property:bolleGG2} the end points of $S$ do not belong to the interior of $\Delta_S$;
    \item\label{property:bolleGG3} the intersection between $\Delta_S$ and any other $\Delta_{S'}$ (for $S' \in \Sdiscarded$) or any $T \in \mathcal{T}^I_h \cup \mathcal{T}^e_h$ is empty or reduced to a node or an edge $e' \in \mathcal{E}_h \setminus \mathcal{E}_h^e$. 
\end{enumerate}
In this case, $\psi_S$ is chosen as the $\Poly{1}{}$-FEM global basis function associated with the node $v_S \in \mathcal{V}_h$. Using Properties~\ref{property:bolleGG1}-\ref{property:bolleGG3}, $\psi_S$ satisfies Equation~\eqref{eq:positiviàintegrale}, as in \cite{Girault1995}. In this case, one of the triangles that shares $v_S$ is denoted as $T_S$. An example of two such macro-elements, $\Delta_S$ and $\Delta_{S'}$, is shown in Figure~\ref{fig:GiraultGlowinski}. 
\end{itemize} 
The symbol $T_S$ introduced in this proof will be used in Lemma~\ref{Lemma:LowerBound}.
 Then, Equation~\eqref{eq:BolleSupportoNulloInAltriSegmenti} follows by construction. 
\end{proof}

\begin{remark}
\label{rem:sharp}
In complex engineering applications, the assumption on the sharpness of the angles of $\gamma$~\cite{Girault1995} may be restrictive. Our construction offers some flexibility in this regard: this hypothesis, can be removed by simply excluding the subset $\Sdiscarded$ from the coarsening strategy proposed here, without affecting the rest of the discretization procedure. Indeed, the sharpness assumption is used exclusively to guarantee that properties~\ref{property:bolleGG1}--\ref{property:bolleGG3} in the proof of Proposition~\ref{Proposition:proprietàfunzionibolla} hold: it plays no role elsewhere in the construction. This provides a practical trade-off: relaxing the angle condition allows the method to be applied to a broader class of domains, at the cost of a possibly coarser discretization of $\gamma$, depending on the tessellation $\mathcal{T}_h$.
\end{remark}

\begin{remark}
Other choices for the enrichment functions of the discrete spaces could have been considered instead of those introduced in Section~\ref{subsec:DiscreteSpaces}, such as the edge bubble functions proposed in \cite{berrone2006robust}. Unlike the edge bubble functions adopted in this work, whose support coincides with the union of two neighboring triangles, the edge bubble functions introduced in \cite{berrone2006robust} are supported only on portions of these triangles. Consequently, they could also be used to establish the counterpart of Proposition~\ref{Proposition:proprietàfunzionibolla}, even for a finer discretization $\mathcal{S}_\eta$. However, this choice would shift the main difficulty to the identification of the enrichment function associated with a given element $S$, which would significantly complicate the classification procedure described in Algorithm~\ref{alg:classification}.
\end{remark}

Finally, we are able to define the operator $ \Pi_h $ as:
\begin{equation}
\label{eq:Pi_h}
\Pi_h(v) := R_h(v) + \sum_{S \in \mathcal{S}_\eta} c_S \psi_S \quad \forall v \in X,
\end{equation}
where $ \psi_S$ is the function associated with $S$ as in the previous proof and each constant $ c_S $ is chosen so that
\begin{equation}
\label{eq:condizioneequiv}
\int_S \Pi_h(v)|_S  = \int_S v|_S \quad \forall S \in \mathcal{S}_\eta.
\end{equation}
Substituting Equation~\eqref{eq:Pi_h} into Equation~\eqref{eq:condizioneequiv} and using~\eqref{eq:BolleSupportoNulloInAltriSegmenti}, we obtain the following explicit expression for the coefficients $c_S$:
\begin{equation}
\label{eq:CostantiCs}
c_S = -\frac{1}{\int_S \psi_S|_S  \, } \int_S \left(R_h(v) - v\right)|_S  .
\end{equation}
This operation is well-defined since the denominator is non-zero by virtue of Equation~\eqref{eq:positiviàintegrale}. Now, in order to demonstrate Equations~\eqref{eq:FortinOperator1} and~\eqref{eq:FortinOperator2}, a preliminary result is showed.

\begin{figure}
    \centering
    \includegraphics[width=0.7\linewidth]{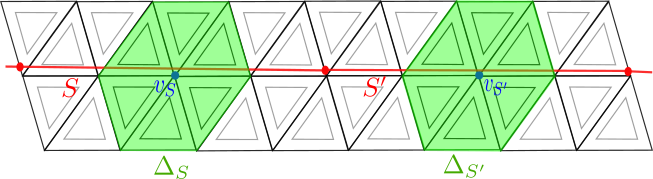}
    \caption{Example of two elements $S, S' \in \Sdiscarded$ (highlighted in red). For each element, $v_S$ and $v_{S'}$ (in blue) denote the associated vertex, while $\Delta_S$ and $\Delta_{S'}$ (in green) represent the corresponding patch of triangles surrounding them.}
    \label{fig:GiraultGlowinski}
\end{figure}

\begin{lemma}
\label{Lemma:rapportosegmenti}
    Since $ \left( \mathcal{T}_h \right)_{h>0}$ is shape-regular, for each segment $s$ lying inside a triangle $T \in \mathcal{T}_h$ or on its boundary, the ratio between its length and the length of its mapping $\hat{s}$ onto the reference triangle satisfies
    \begin{equation*}
        \frac{|s|}{|\hat{s}|} \geq C h_T.
    \end{equation*}
\end{lemma}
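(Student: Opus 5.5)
The plan is to work directly with the affine map $F_T$ of \eqref{eq:affinetransf}. Write $F_T(\hat{\bm{x}}) = \bm{x}_0 + B_T\hat{\bm{x}}$, where $B_T$ is the $2\times 2$ Jacobian matrix whose columns are the edge vectors $\bm{x}_1-\bm{x}_0$ and $\bm{x}_2-\bm{x}_0$. Since $F_T$ is affine, a segment $s\subset T$ with endpoints $\bm{p},\bm{q}$ is the image of the segment $\hat{s}\subset\hat{T}$ with endpoints $\hat{\bm{p}}=F_T^{-1}(\bm{p})$ and $\hat{\bm{q}}=F_T^{-1}(\bm{q})$. This holds equally when $s$ lies on $\partial T$, because $F_T$ maps $\partial\hat{T}$ onto $\partial T$. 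Then
\begin{equation*}
|s| = |B_T(\hat{\bm{q}}-\hat{\bm{p}})| \geq \sigma_{\min}(B_T)\,|\hat{\bm{q}}-\hat{\bm{p}}| = \frac{|\hat{s}|}{\|B_T^{-1}\|},
\end{equation*}
where $\sigma_{\min}(B_T)$ is the smallest singular value of $B_T$ and $\|\cdot\|$ is the spectral norm. The claim therefore reduces to the bound $\|B_T^{-1}\|\leq C h_T^{-1}$.

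For that bound I will use the classical estimate of \cite{ciarlet1978finite},
\begin{equation*}
\|B_T^{-1}\| \leq \frac{\hat{h}}{\rho_T},
\end{equation*}
where $\hat{h}$ is the diameter of $\hat{T}$ (a fixed constant, $\sqrt{2}$ for the standard reference triangle) and $\rho_T$ is the diameter of the largest ball inscribed in $T$. The argument is short: any vector $\bm{y}$ with $|\bm{y}|=\rho_T$ can be written as $\bm{y}=\bm{x}-\bm{z}$ with $\bm{x},\bm{z}\in T$. Hence $B_T^{-1}\bm{y}=F_T^{-1}(\bm{x})-F_T^{-1}(\bm{z})$ is a difference of two points of $\hat{T}$, so $|B_T^{-1}\bm{y}|\leq\hat{h}$. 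Shape regularity of $(\mathcal{T}_h)_{h>0}$ gives $h_T/\rho_T\leq\sigma$ uniformly in $T$ and $h$. Therefore $\|B_T^{-1}\|\leq \hat{h}\,\sigma\, h_T^{-1}$, and so $|s|/|\hat{s}|\geq (\hat{h}\sigma)^{-1} h_T$, which is the claim with $C=(\hat{h}\sigma)^{-1}$.

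There is no real obstacle. The only point needing care is that the ratio must be bounded below by the \emph{smallest} stretching of $B_T$, not the largest, because the segment $s$ can have any orientation. This is exactly where shape regularity is needed: without it, a degenerate triangle would make $\sigma_{\min}(B_T)\ll h_T$ in some direction. A degenerate segment ($\bm{p}=\bm{q}$) is excluded, or the statement holds trivially in that case.
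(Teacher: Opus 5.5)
Your proposal is correct and follows essentially the same route as the paper. Both arguments write $|s|/|\hat{s}| = |B_T \hat{v}|$ for a unit direction $\hat{v}$, bound this below by the smallest singular value $1/\|B_T^{-1}\|$, and conclude with the estimate $\|B_T^{-1}\| \leq h_{\hat{T}}/\rho_T$ from \cite[Theorem~3.1.3]{ciarlet1978finite} together with shape regularity. The only differences are cosmetic: you reprove Ciarlet's bound inline, and you handle segments on $\partial T$ with the same argument, whereas the paper treats them as a separate, immediate case.
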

\begin{proof}
    If $s$ coincides with an edge of $T$, the proof is immediate. Let us consider a segment $s \subset T$ and its mapping $\hat{s}$. Without loss of generality, we may assume that $\hat{s} := \{ \hat{x}_0 + t\hat{v} : t \in [0,\ell] \}$, with $\hat{x}_0 \in \mathbb{R}^2$, $0<\ell = |\hat{s}| $ and $\hat{v}$ a unitary vector, so that $s = \{ F_T(\hat{x}_0) + t B_T \hat{v} : t \in [0,\ell] \}$, and
    \begin{equation*}
        \frac{|s|}{|\hat{s}|} = |B_T \hat{v}|.
    \end{equation*}
    Using classical linear algebra results, we have $|B_T \hat{v}| \geq \sigma_{\min} |\hat v|$, where $\sigma_{\min}$ denotes the smallest singular value of $B_T$, and $\sigma_{\min} = 1/\norm{B_T^{-1}}$. Thanks to~\cite[Theorem~3.1.3]{ciarlet1978finite} and mesh-regularity assumptions:
    \begin{equation*}
    \frac{|s|}{|\hat{s}|}  \geq \frac{1}{\norm{ B_T^{-1}} } \geq \frac {\rho_T}{h_{\hat{T}}} \geq C h_T,
\end{equation*}
where $h_{\hat{T}}$ is the longest edge of $\hat{T}$.
\end{proof}

Now we derive a lower bound for the denominator in Equation~\eqref{eq:CostantiCs}. This result will be used in Theorem~\ref{teo:FinaleTheorem}.

\begin{lemma}
\label{Lemma:LowerBound}
For each \( S \in \mathcal{S}_\eta \), there exists a constant \( C(a)\), depending only on the parameter \( a \) defined in Section \ref{sec:boundarydiscret}, such that
    \begin{equation*}
        \int_S\psi_S|_S  \geq C(a) h_{T_S},
    \end{equation*}
    where $\psi_S$ and $T_S$ are chosen as described in the proof of Proposition~\ref{Proposition:proprietàfunzionibolla}.
\end{lemma}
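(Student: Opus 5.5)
The proof follows the three cases in the proof of Proposition~\ref{Proposition:proprietàfunzionibolla}. In each case we pull back to the reference triangle $\hat{T}$ through $F_T$. Bubble and hat functions are affine-invariant: $\psi_S\circ F_{T}$ is the fixed reference bubble or hat function. Lemma~\ref{Lemma:rapportosegmenti} then converts reference lengths into physical lengths. Since $\psi_S\ge 0$ on $S$ in all cases, we may bound $\int_S\psi_S|_S$ from below by the integral over any sub-segment of $S$ inside a single triangle.

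\emph{Case 1: internal bubble, $S\cap\partial t_{T_S}\neq\emptyset$.} Let $\hat{\bm x}^*=F_{T_S}^{-1}(\bm x^*)\in\partial t_{\hat T}$ be a point of $S\cap\partial t_{T_S}$. The boundary $\partial t_{\hat T}$ lies at distance at least $a/\sqrt2$-type (precisely, barycentric coordinates $\ge a$) from $\partial\hat T$. So $\hat b=27\hat\varphi_1\hat\varphi_2\hat\varphi_3\ge 27a^3$ at $\hat{\bm x}^*$.

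The key geometric point is that $S\cap T_S$ is a segment crossing $T_S$ with endpoints on $\partial T_S$. This holds for a definitive element because of \eqref{eq:OgniScontenutoinunT}; merged elements only enlarge $S$, and the portion inside $T_S$ still goes from $\partial T_S$ to $\partial T_S$. A polygon corner of $\gamma$ inside $T_S$ would make it a union of two segments, each reaching $\partial T_S$. Hence the preimage $\hat s$ contains a sub-segment $\hat s'$ starting at $\hat{\bm x}^*$ and reaching $\partial\hat T$, with length at least $\operatorname{dist}(\partial t_{\hat T},\partial\hat T)\ge c\,a$.

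On the part of $\hat s'$ at distance $\le c a/2$ from $\hat{\bm x}^*$, all barycentric coordinates are $\ge a/2$, since they are Lipschitz. There $\hat b\ge 27a^3/8$. We then get
\[
\int_S\psi_S\ \ge\ \frac{|s|}{|\hat s|}\int_{\hat s''}\hat b\ \ge\ C h_{T_S}\,a^4,
\]
using Lemma~\ref{Lemma:rapportosegmenti}, which covers the affine change of variables along a segment. If $a=0$, $t_T=T$ and this bound degenerates. I would then argue with the edge-type estimate below, or simply note $C(a)>0$ for $a\in(0,1/3)$; this corner case needs attention.

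\emph{Case 2: edge bubble, $e\subset S$.} Here $\int_S\psi_S\ge\int_e 4\varphi_1\varphi_2=4|e|\int_0^1t(1-t)\,dt=\tfrac23|e|$. By shape regularity, $|e|\ge C h_{T_S}$, which gives the bound with a constant independent of $a$.

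\emph{Case 3: $S\in\Sdiscarded$, $\psi_S$ the hat function at $v_S$.} By \ref{property:bolleGG1}, $S$ crosses an interior segment $s$ of $\Delta_S$ at a point $\bm p$ whose distance from $v_S$ is less than $|S|/2$. I would follow \cite{Girault1995}. The non-sharp-angle assumption and $|S|\ge 3h$ let us pick $\bm p$ well inside $\Delta_S$, where $\psi_S(\bm p)$ is bounded below by a shape-regularity constant; this requires $|\bm p-v_S|\le(1-\delta)$ times the local size. By \ref{property:bolleGG2}, $S$ passes through $\Delta_S$ on both sides of $\bm p$, so a sub-segment of length $\gtrsim h_{T_S}$ around $\bm p$ lies where $\psi_S\ge c$. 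Local quasi-uniformity relates the sizes of triangles in $\Delta_S$ to $h_{T_S}$.

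I expect this third case to be the main obstacle: turning properties \ref{property:bolleGG1}--\ref{property:bolleGG3} into a quantitative lower bound on $\psi_S$ along a quantitatively long piece of $S$. Two facts have to be made precise: that the length threshold $3h$ and the angle bound keep $\bm p$ away from $\partial\Delta_S$, and that $S$ does not leave $\Delta_S$ too quickly. The precise uniform length of $\hat s'$ in Case~1 under merging also needs care.
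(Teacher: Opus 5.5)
Your proposal is correct. In two of the three cases it matches the paper.

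\textbf{Edge bubble.} The paper also bounds $\int_S\psi_S$ below by $\int_e b_h^e$ and converts lengths with Lemma~\ref{Lemma:rapportosegmenti}. Your direct value $\tfrac23|e|$ together with $|e|\ge C h_{T_S}$ is the same estimate.

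\textbf{$S\in\Sdiscarded$.} The paper's own proof only says to adapt Lemma~4 of \cite{Girault1995}, so your sketch is no less complete than the published argument. Note that the only quantitative input you need from P.1 is a bound $\psi_S(p)\ge c_0$. After that, your Case~1 argument finishes the job: the hat function is $C h_{T_S}^{-1}$-Lipschitz by shape regularity and local quasi-uniformity, and P.2 forces an arc of $S$ of length $\gtrsim h_{T_S}$ inside a ball around $p$ where $\psi_S\ge c_0/2$.

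\textbf{Internal bubble: a genuinely different route.}
\begin{itemize}
\item The paper pulls $S_T$ back to $\hat T$ and bounds $I_{\hat S}$ by explicitly integrating $27\hat x\hat y(1-\hat x-\hat y)$ along extremal configurations. These are pieces ending at the vertices $D$ or $E$ of $t_{\hat T}$ (Cases 9.1--9.9 of the Appendix), with the minima over the slope located numerically. It treats explicitly only elements made of at most two segments and obtains constants of order $a^3$, such as $18a^3(1-2a)$.
\item You use only three facts: all barycentric coordinates are at least $a$ on $t_{\hat T}$, they are $\sqrt2$-Lipschitz on $\hat T$, and the arc of $\gamma\cap T_S$ through $x^*$ must reach $\partial\hat T$.
\item Your argument is fully analytic, needs no numerics or case enumeration, and handles merged elements uniformly.
\item The price is a weaker dependence, $C(a)\sim a^4$. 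That matters only for the paper's discussion of small $a$, not for the lemma.
\end{itemize}

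\textbf{Two small corrections.}
\begin{itemize}
\item If more than one corner of $\gamma$ lies in $T_S$, the piece from $\hat x^*$ to $\partial\hat T$ need not be a single segment. Your estimate still holds if you take the first $a/(2\sqrt2)$ of arc length of the polygonal path from $\hat x^*$. That portion stays in the ball of radius $a/(2\sqrt2)$ around $\hat x^*$, hence in $\hat T$, where all coordinates are $\ge a/2$. Then apply Lemma~\ref{Lemma:rapportosegmenti} to each straight piece.
\item At $a=0$ no edge-type argument can rescue the bound. There $\partial t_T=\partial T$, so an arbitrarily short piece clipping a corner of $T$ can be accepted as definitive, making $\int_S\psi_S/h_{T_S}$ arbitrarily small. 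The restriction $a>0$ is genuinely needed, and the paper's constants also vanish at $a=0$.
\end{itemize}
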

\begin{proof}
Let us consider an element $S \in \mathcal{S}_\eta$. Equation~\eqref{eq:Scontenutonelsottotriangolo} identifies three distinct configurations that must be addressed separately in the analysis:
\begin{itemize}
    \item if \( S \in \tilde\Sdiscarded \), we can adapt the proof of Lemma~4 in \cite{Girault1995}, since the use of a locally quasi-uniform mesh makes this case a straightforward extension of that result. 
    \item If exists $e \in \mathcal{E}^e_h$ such that $e \subset S$, as in the proof of Proposition~\ref{Proposition:proprietàfunzionibolla} $\psi_S$ is an \textit{edge bubble function}. Since $b_h^e$ is non-negative, it follows that
\begin{equation*}
    \int_S \psi_S|_S  \geq \int_e \psi_S|_e  = \frac{|e|}{|\hat{e}|}\int_{\hat{e}} \hat{\psi}_S|_{\hat{e}} \geq C \frac{|e|}{|\hat{e}|}.
\end{equation*}
Using Lemma~\ref{Lemma:rapportosegmenti}, the thesis is obtained.
\item The last case happens when we suppose that exists $T \in \mathcal{T}^I_h$ such that $S \cap \partial t_{T} \ne \emptyset$. This triangle is exactly $T_S$ from the proof of Proposition~\ref{Proposition:proprietàfunzionibolla}. Since the chosen \textit{internal bubble function} $\psi_S$ is null outside $T$, it suffices to consider the portion  $S_{T} \subseteq S$ that is entirely contained in $T$. Now, switching to the reference element, we obtain:
\begin{equation}
\label{eq:IntegraleClassicCaseRiferimento}
\int_{S_T} \psi_S|_{S_T}  = \frac{|S_T|}{|\hat{S}_T|}\int_{\hat{S}_T} \hat{\psi}_S|_{\hat{S}_T}:=  \frac{|S_T|}{|\hat{S}_T|} I_{\hat{S}}.
        \end{equation}
Depending on how and where the intersection between $S_T$ and the subtriangle $t_T$ occurs, the computation of the integral $I_{\hat{S}}$ will differ. In the \hyperref[appendix]{Appendix} we have proved that exists a constant $C(a)$ independent of $h$ and dependent on the parameter $a$ such that 
\begin{equation}
\label{eq:stimexappendix}
   I_{\hat{S}} \geq C(a).
\end{equation}
As $a$ is fixed, and since $S_T$ consists of a finite union of segments, using Lemma~\ref{Lemma:rapportosegmenti} we obtain the thesis.
\end{itemize}
\end{proof}

A quick inspection of the proof in the \hyperref[appendix]{Appendix} may lead the reader to observe that the parameter \( a \) introduced in Equation~\eqref{eq:TrasformScaling} plays a crucial role in ensuring stability. Values of $a$ close to zero lead to very small values of the constant $C(a)$, which may subsequently affect the stability results of the Theorem~\ref{teo:FinaleTheorem}. However, it should be noted that the parameter $a$ influences also the geometric discretization: choosing $a$ near its upper bound of $1/3$ can result in excessively coarse meshes for the boundary (see Section~\ref{sec:discretization}). The influence of the parameter $a$ on the stability of the method will be studied numerically in Section~\ref{sec:numericalresults}. For the remainder of this section, $a$ is considered fixed, and the dependence of the constant $C(a)$ on this parameter is omitted.

In the next theorem, we use will two results in \cite{Girault1995}, where the reader can find the proofs.
\begin{lemma}
\label{Lemma:StimaSuTriangRef}
    Let $\hat{T}$ denote the reference unit triangle and let $\hat{l}$ be any segment that intersects $\hat{T}$. Then, there exists a constant $\hat{C}$, independent of $\hat{l}$, such that
    \begin{equation*}
        \norm[\leb{2}{\hat{l}}]{ \hat{w}} \leq \hat{C} \norm[\sob{1}{\hat{T}}]{ \hat{w}} \quad \forall \hat{w} \in \sob{1}{\hat{T}}.
    \end{equation*}
\end{lemma}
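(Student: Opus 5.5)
The statement is a uniform trace inequality on the reference triangle, so I will prove it for $\hat w$ in a dense smooth class and then extend to $\sob{1}{\hat{T}}$ by continuity. The key point is that only the portion $\hat{l}\cap\hat{T}$ matters: the integrand $\hat w$ is defined only on $\hat T$, so the norm in the statement is $\norm[\leb{2}{\hat{l}\cap\hat{T}}]{\hat w}$. This portion is a chord of the convex set $\hat T$, hence a segment of length at most $\operatorname{diam}(\hat T)=\sqrt2$. The task is therefore a trace estimate on an arbitrary chord of a fixed convex domain, with a constant independent of the position and direction of the chord.

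The first step is to reduce to a fixed larger domain. I fix a Sobolev extension operator $E:\sob{1}{\hat T}\to\sob{1}{\R^2}$; it exists because $\hat T$ is Lipschitz. I also fix a cutoff, so that $E\hat w$ is supported in a ball $B$ containing $\hat T$, with $\norm[\sob{1}{\R^2}]{E\hat w}\le C_E\norm[\sob{1}{\hat T}]{\hat w}$. Then $\norm[\leb{2}{\hat l\cap\hat T}]{\hat w}\le\norm[\leb{2}{L}]{E\hat w}$, where $L$ is the full line containing $\hat l$.

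The second step is a uniform trace estimate on lines. After a rigid motion, which preserves all the norms involved, I can take $L=\{x_2=0\}$. For $u$ smooth and compactly supported,
\begin{equation*}
|u(x_1,0)|^2=-\int_0^\infty \partial_{2}\bigl(u^2\bigr)(x_1,t)\,dt\le \int_0^\infty\bigl(u^2+|\partial_2u|^2\bigr)(x_1,t)\,dt .
\end{equation*}
Integrating in $x_1$ gives $\norm[\leb{2}{L}]{u}^2\le\norm[\sob{1}{\R^2}]{u}^2$. The constant here is $1$, so it does not depend on the line. Combining this with the first step yields the claim with $\hat C=C_E$.

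An alternative that avoids the extension operator is the following. Choose the vertex of $\hat T$ that is not too close to $L$ in the transverse direction. Integrate from the chord toward that vertex along a family of parallel segments staying inside $\hat T$, and use shape properties of the reference triangle to bound the lengths of these segments from below. I expect this route to be the main obstacle, because degenerate chords near a vertex or along an edge make the uniform lower bound delicate. The extension argument sidesteps that difficulty entirely, so I would adopt it.

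Finally, density of $C^\infty(\overline{\hat T})$ in $\sob{1}{\hat T}$, together with continuity of the trace, gives the inequality for all $\hat w$.
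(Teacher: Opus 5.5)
Your argument is correct, but it is not a variant of anything in the paper: the paper gives no proof of this lemma. It states it, together with Lemma~\ref{Lemma:Confrontotralunghezzesegmenti}, and defers the proofs to Girault--Glowinski \cite{Girault1995}. So you are supplying a self-contained justification where the text only has a citation.

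Your argument isolates exactly why the constant is uniform. After extending to $E\hat w\in H^1(\mathbb{R}^2)$, only the chord $\hat l\cap\hat T$ matters, and it lies on a full line $L$. The half-plane estimate $\|u\|_{L^2(L)}\le\|u\|_{H^1(\mathbb{R}^2)}$ has constant $1$ for every line, so $\hat C=\|E\|$ depends only on $\hat T$.

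What this route buys:
\begin{itemize}
\item It avoids the geometric case analysis (chords near a vertex or along an edge) that your rejected alternative of integrating toward a vertex would require.
\item It automatically covers the paper's remark that the endpoints of $\hat l$ need not lie on $\partial\hat T$.
\item It carries over verbatim to any Lipschitz reference element and to traces on hyperplanes in higher dimension.
\end{itemize}
The cost is that $\hat C$ is not explicit, which the paper never needs.

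One small bookkeeping point. Your line estimate is proved for $u\in C_c^\infty(\mathbb{R}^2)$, but $E\hat w$ is generally not smooth even when $\hat w$ is. The density step should therefore be run on the extended side:
\begin{itemize}
\item approximate $E\hat w$ in $H^1(\mathbb{R}^2)$ by $u_n\in C_c^\infty(\mathbb{R}^2)$;
\item note that $u_n|_{\hat T}\to\hat w$ in $H^1(\hat T)$;
\item apply the estimate to $u_n$ and pass to the limit.
\end{itemize}
Equivalently, first extend the line estimate to all of $H^1(\mathbb{R}^2)$ by density, then use $E\hat w=\hat w$ on $\hat T$. This is routine and does not affect the substance of your proof.
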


\begin{lemma}
\label{Lemma:Confrontotralunghezzesegmenti}
    Let $l$ be a segment that intersects a non degenerate triangle $T$ and let $\hat{l}$ be its image on the reference unit triangle $\hat{T}$ by the affine transformation that maps $\hat{T}$ onto $T$. Let $B_T$ denote the matrix of this transformation and let $\norm{B_T}$ be its euclidean norm. Then,
\[
\frac{|l|}{|\hat{l}|} \leq \norm{B_T}.
\]
\end{lemma}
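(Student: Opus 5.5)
We need to prove that $|l|/|\hat l| \le \norm{B_T}$, where $l = F_T(\hat l)$ and $F_T(\hat{\bm x}) = \bm x_0 + B_T\hat{\bm x}$ is the affine map of Equation~\eqref{eq:affinetransf}. The argument is the upper-bound counterpart of the proof of Lemma~\ref{Lemma:rapportosegmenti}, with the largest singular value replacing the smallest.

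\textbf{Step 1: relate $\hat l$ and $l$.} Since $F_T$ is an invertible affine map on all of $\mathbb{R}^2$ (the triangle is non-degenerate, so $B_T$ is invertible), segments are mapped to segments. I will therefore parametrize the reference segment as $\hat l = \{\hat{\bm x}_0 + t\hat{\bm v} : t\in[0,\ell]\}$, with $|\hat{\bm v}|=1$ and $\ell=|\hat l|>0$. Its image is then $l = \{F_T(\hat{\bm x}_0) + tB_T\hat{\bm v} : t\in[0,\ell]\}$. The fact that $l$ merely intersects $T$, rather than lying inside it, plays no role: the map is global.

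\textbf{Step 2: compute the length ratio.} The length of $l$ is $|l| = \ell\,|B_T\hat{\bm v}|$. Hence
\[
\frac{|l|}{|\hat l|} = |B_T\hat{\bm v}| \le \norm{B_T}\,|\hat{\bm v}| = \norm{B_T},
\]
where the inequality is the definition of the operator norm induced by the Euclidean norm, equivalently the largest singular value of $B_T$.

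\textbf{Step 3: degenerate case.} If $l$ reduces to a point, the ratio is either excluded or trivially handled, so this case needs only a remark.

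The only care needed is to state clearly which norm $\norm{B_T}$ denotes. I expect no genuine obstacle: the whole proof is the one-line operator-norm bound of Step 2.
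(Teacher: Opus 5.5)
Your proof is correct and is essentially the intended argument. The paper does not reprove this lemma; it defers to \cite{Girault1995}. Your parametrization $\hat l=\{\hat{\bm x}_0+t\hat{\bm v}:t\in[0,\ell]\}$, giving $|l|/|\hat l|=|B_T\hat{\bm v}|\le\norm{B_T}$, is exactly the computation the paper carries out for Lemma~\ref{Lemma:rapportosegmenti}, with the largest singular value taking the place of the smallest.
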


The assertions of the two preceding Lemmata remain valid even in the case 
where the endpoints of $l$ and $\hat{l}$ do not lie on the boundaries 
of $T$ and $\hat{T}$, respectively. Now, we are able to prove the main theorem.

\begin{theorem}
\label{teo:FinaleTheorem}
    Assume that $\eta \leq Ch$. There exists a constant $\beta^* >0$, independent of $h$ and $\eta$ such that the discrete inf-sup condition~\eqref{eq:inf-sup} holds true.
\end{theorem}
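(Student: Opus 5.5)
We verify the two hypotheses of Lemma~\ref{Lemma:OperatorediFortin} for the operator $\Pi_h$ defined in \eqref{eq:Pi_h}. The continuous inf-sup condition \eqref{eq:InfSupContinua} is available, so the discrete inf-sup condition \eqref{eq:inf-sup} follows once these hypotheses hold. Property \eqref{eq:FortinOperator2} is immediate. Every $\mu_\eta\in M_\eta$ is piecewise constant on $\mathcal{S}_\eta$, so $\advbilin{\Pi_h(v)|_\gamma - v|_\gamma}{\mu_\eta}=\sum_{S}\mu_\eta|_S\int_S(\Pi_h(v)-v)|_S$. This sum vanishes because of \eqref{eq:condizioneequiv}, which is well posed thanks to Proposition~\ref{Proposition:proprietàfunzionibolla}. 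All the work therefore lies in the uniform bound \eqref{eq:FortinOperator1}.

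For \eqref{eq:FortinOperator1}, first apply the triangle inequality to get $\norm[\sob{1}{\Omega}]{\Pi_h v}\le \norm[\sob{1}{\Omega}]{R_h v}+\norm[\sob{1}{\Omega}]{\sum_S c_S\psi_S}$. The first term is bounded by $C\norm[\sob{1}{\Omega}]{v}$, using the Cl\'ement estimates \eqref{eq:estimates on the clement operator} with $m=1$ together with the finite overlap of the patches $\omega_{\mathcal{T}_h}(T)$ that comes from local quasi-uniformity.

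For the second term, we use that the supports of the $\psi_S$ have bounded overlap. The bubbles are supported in $T_S$, in the two triangles sharing $e$, or in $\Delta_S$, and these supports are essentially disjoint by construction and by \ref{property:bolleGG3}. It then suffices to prove the local bound
\[
|c_S|^2\norm[\sob{1}{\Omega}]{\psi_S}^2\le C\sum_{T\in\omega_S}\Big(h_T^{-2}\norm[\leb{2}{T}]{R_hv-v}^2+\seminorm[\sob{1}{T}]{R_hv-v}^2\Big),
\]
where $\omega_S$ is a bounded collection of triangles met by $S$, lying near $T_S$. We obtain it as follows.

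\begin{enumerate}
\item \emph{Size of $\psi_S$.} By scaling, $\norm[\leb{2}{}]{\psi_S}\le Ch_{T_S}$ and $\seminorm[\sob{1}{}]{\psi_S}\le C$.
\item \emph{Denominator of $c_S$.} By Lemma~\ref{Lemma:LowerBound}, $\int_S\psi_S\ge C h_{T_S}$, so $|c_S|\le C h_{T_S}^{-1}\int_S|R_hv-v|$.
\item \emph{Numerator of $c_S$.} Write $w=R_hv-v$ and split $S$ into its pieces $S\cap T$. For each piece, apply Cauchy--Schwarz to get $\int_{S\cap T}|w|\le |S\cap T|^{1/2}\norm[\leb{2}{S\cap T}]{w}$. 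Then pass to the reference element. Lemma~\ref{Lemma:Confrontotralunghezzesegmenti} gives $|l|/|\hat l|\le\norm{B_T}\le Ch_T$, and Lemma~\ref{Lemma:StimaSuTriangRef} gives $\norm[\leb{2}{\hat l}]{\hat w}\le\hat C\norm[\sob{1}{\hat T}]{\hat w}$. Scaling back yields
\[
\norm[\leb{2}{S\cap T}]{w}^2\le C\big(h_T^{-1}\norm[\leb{2}{T}]{w}^2+h_T\seminorm[\sob{1}{T}]{w}^2\big).
\]
\item \emph{Collecting terms.} The assumption $\eta\le Ch$, applied locally, together with local quasi-uniformity gives $|S|\le Ch_{T_S}$ and bounds the number of triangles met by $S$. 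Combining these, $|c_S|\le C\,h_{T_S}^{-1/2}\big(h^{-1/2}\norm[\leb{2}{}]{w}+h^{1/2}\seminorm[\sob{1}{}]{w}\big)$ on the triangles met by $S$.
\end{enumerate}

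Multiplying the bound on $|c_S|^2$ by $\norm[\sob{1}{}]{\psi_S}^2\le C$ from step 1 produces the desired local estimate. The Cl\'ement estimates with $m=1$ then bound its right-hand side by $C\norm[\sob{1}{\omega(T)}]{v}^2$. Summing over $S$, with finite overlap, gives \eqref{eq:FortinOperator1}.

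The main obstacle is the bookkeeping in step 4. An element $S$, especially one produced by agglomeration or belonging to $\Sdiscarded$, may cross several triangles. All of these triangles must be comparable in size to $T_S$, and their number must be uniformly bounded. This is where $\eta\le Ch$ and local quasi-uniformity are used. We must also check that each triangle appears in only boundedly many of the sets $\omega_S$, so that the sum over $S$ does not lose a factor depending on $h$.
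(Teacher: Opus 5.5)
Your proposal is correct and follows the paper's proof essentially step for step. It uses the same Fortin operator $\Pi_h$, the same bounded-overlap reduction, the scaling bound $\|\psi_S\|_{H^1(\Delta_S)}\le C$, Lemma~\ref{Lemma:LowerBound} for the denominator of $c_S$, and Cauchy--Schwarz plus the reference-element trace estimate (Lemmas~\ref{Lemma:StimaSuTriangRef} and~\ref{Lemma:Confrontotralunghezzesegmenti}) for its numerator. If anything, you are more explicit than the paper on two points: the $H^1$-stability of $R_h$, and where $\eta\le Ch$ (read locally, as $|S|\le C h_{T_S}$) is needed to control the number and size of the triangles met by $S$; the paper leaves both implicit in its final appeal to local quasi-uniformity.
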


\begin{proof}
The discrete inf-sup condition is ensured by the construction of a Fortin operator, as stated in Lemma~\ref{Lemma:OperatorediFortin}. In particular, if there exists an operator $\Pi_h$ satisfying the Equations~\eqref{eq:FortinOperator1}–\eqref{eq:FortinOperator2}, then the thesis follows. The operator introduced in Equation~\eqref{eq:Pi_h} satisfies Equation~\eqref{eq:FortinOperator2} by construction, thanks to the specific choice of the constant $c_S$ and the fact that $M_\eta$ contains constant functions. It remains therefore to prove that Equation~\eqref{eq:FortinOperator1} holds. 

For any $v \in X$:
\begin{equation*}
\norm[X]{\Pi_h(v)} \leq \norm[X]{R_h(v)} + \norm[X]{\sum_{S \in \mathcal{S}_n} c_S \psi_S}.
\end{equation*}
We denote by $\Delta_S$ the support of $\psi_S$, which can be a single triangle, a couple of triangles sharing an edge, or a macro-element consisting of multiple triangles, depending on $S$. Since these supports may overlap, the above sum reduces to:
$$
\norm[X]{\sum_{S \in \mathcal{S}_n} c_S \psi_S} 
\;\le\; C_1 \left( \sum_{S \in \mathcal{S}_n} |c_S|^2 \norm[\sob{1}{\Delta_S}]{\psi_S}^2 \right)^{\frac{1}{2}},
$$
where the constant $C_1>0$ depends only on the maximum number of supports $\Delta_S$ overlapping at a single point, that remains bounded due to the mesh quality assumptions. Let $T$ be any triangle in $\Delta_S$ and $B_T$ be the matrix of the affine transformation that maps $\hat{T}$ onto $T$ as in Equation~\eqref{eq:affinetransf}. 
Using~\cite[Theorem~3.1.3]{ciarlet1978finite}, thanks to mesh regularity assumptions, there exists a constant $C$ independent of $h$ such that:
\begin{equation}
\label{eq:stimeconmeshregularity}
    \det(B_T) \simeq C h_T^2, \quad \norm{B_T^{-1}} \leq C h_T^{-1}, \quad \norm{B_T} \leq C h_T.
\end{equation}
These estimates allows us to obtain the following upper bounds:
\begin{equation*}
\norm[\leb{2}{T}]{\psi_S}  =|\det(B_T)|^{\frac{1}{2}} \norm[\leb{2}{\hat{T}}]{\hat{\psi}_S} \leq C h_T,
\end{equation*}
and
\begin{equation*}
\begin{aligned}
    \seminorm[\sob{1}{T}]{\psi_S} &= \scal[T]{\nabla \psi_S}{\nabla \psi_S}^{\frac{1}{2}} =|\det(B_T)|^{\frac{1}{2}} \scal[\hat{T}]{B_T^{-T}\nabla \hat{\psi}_S}{B_T^{-T}\nabla \hat{\psi}_S}^{\frac{1}{2}} \leq |\det(B_T)|^{\frac{1}{2}} \norm{ B_T^{-1}} \seminorm[\sob{1}{\hat{T}}]{\hat{\psi}_S} \leq C.
\end{aligned}
\end{equation*}
Hence, we obtain
\begin{equation}
\label{eq:ProofMainTeoremUpperBoundbS}
\norm[\sob{1}{\Delta_S}]{\psi_S} \leq C.
\end{equation}
Next, let us find a bound for $c_S$. For each $S \in \mathcal{S}_\eta$, let $\{\ell_i\}_{i \in \mathcal{I}_S}$ denote the collection of segments composing $S$, where $\mathcal{I}_S$ is the corresponding set of indices, so that $S = \bigcup_{i \in \mathcal{I}_S} \ell_i$. Let us call $T_i$ the element of $\mathcal{T}_h$ intersected by $\ell_i$. From Lemma~\ref{Lemma:LowerBound}, we have
\begin{equation*}
\begin{aligned}
    |c_S| &\leq \frac{C}{h_{T_S}} \sum_{i \in \mathcal{I}_S} \left| \int_{\ell_i} (R_h(v) - v)|_{\ell_i}  \right| \leq \frac{C}{h_{T_S}} \sum_{i \in \mathcal{I}_S}|\ell_i|^{\frac{1}{2}} \norm[\leb{2}{\ell_i}]{R_h(v) - v} \\
    &\leq \frac{C}{h_{T_S}} \sum_{i \in \mathcal{I}_S} |\ell_i|^{\frac{1}{2}} \left(\frac{|\ell_i|}{|\hat{\ell}_i|} \right)^{\frac{1}{2}} \norm[\leb{2}{\hat{\ell_i}}]{\hat{R}_h(v) - \hat{v}},
\end{aligned}
\end{equation*}
where we switched to the reference element. Applying Lemma~\ref{Lemma:StimaSuTriangRef}, Lemma~\ref{Lemma:Confrontotralunghezzesegmenti} and Properties~\eqref{eq:stimeconmeshregularity}, we obtain
\begin{equation}
\label{eq:ProofMainTheoremIntermediateCs}
\begin{aligned}
   |c_S| & \leq \frac{C}{h_{T_S}}\sum_{i \in \mathcal{I}_S} |\ell_i|^{\frac{1}{2}} \norm{B_{T_i}}^{\frac{1}{2}} \norm[\sob{1}{\hat{T}_i}]{\hat{R}_h(v) - \hat{v}} \\
   &\leq \frac{C}{h_{T_S}} \sum_{i \in \mathcal{I}_S} |\ell_i|^{\frac{1}{2}} \norm{B_{T_i}}^{\frac{1}{2}}|\det(B_{T_i})|^{-\frac{1}{2}} \left( \norm[\leb{2}{T_i}]{R_h(v) - v}^2 + \norm{B_{T_i}}^2 \seminorm[\sob{1}{T_i}]{R_h(v) - v}^2 \right)^{\frac{1}{2}} \\
   & \leq \frac{C}{h_{T_S}} \sum_{i \in \mathcal{I}_S} \left( \norm[\leb{2}{T_i}]{R_h(v) - v}^2 + h_{T_i}^2 \seminorm[\sob{1}{T_i}]{R_h(v) - v}^2 \right)^{\frac{1}{2}}. \\
\end{aligned}
\end{equation}
Combining~\eqref{eq:ProofMainTeoremUpperBoundbS} and~\eqref{eq:ProofMainTheoremIntermediateCs}, we obtain
\begin{equation*}
\left( \sum_{S \in \mathcal{S}_n} |c_S|^2 \norm[\sob{1}{\Delta_S}]{\psi_S}^2 \right)^{\frac{1}{2}}
\leq \left( \sum_{S \in \mathcal{S}_n} \frac{C}{h_{T_S}^2}   \sum_{i \in \mathcal{I}_S} \left( \norm[\leb{2}{T_i}]{R_h(v) - v}^2 + h_{T_i}^2 \seminorm[\sob{1}{T_i}]{R_h(v) - v}^2 \right) \right)^{\frac{1}{2}}.
\end{equation*}
From~\eqref{eq:estimates on the clement operator} and local quasi-uniformity, the thesis follows.
\end{proof}

The following result is obtained by following the methodology in \cite{Girault1995}.
\begin{theorem}
\label{theo:TeoremaConvergenzaH1}
    Under mesh assumptions on $\left( \mathcal{T}_h \right)_{h>0}$, construction of $\mathcal{S}_\eta$ and discrete spaces $X_h$ and $M_\eta$, Problem~\eqref{eq:DiscreteMixedProblem} has a unique solution $(u_h, \lambda_\eta)$ such that
    \begin{equation*}
        \norm[X]{\tilde{u}-u_h} + \norm[M]{\lambda - \lambda_\eta}  \leq C \left( \inf_{v_h \in X_h} \norm[X]{\tilde{u}-v_h} + \inf_{\mu_\eta \in M_\eta} \norm[M]{\lambda - \mu_\eta}  \right).
    \end{equation*}
    If $\tilde{u} \in \sob{\frac{3}{2}-\epsilon}{\Omega}$ and $\lambda \in \leb{2}{\gamma}$, with $\epsilon > 0$:
    \begin{equation*}
        \norm[X]{\tilde{u}-u_h} + \norm[M]{\lambda - \lambda_\eta}  \leq C \left( h^{\frac{1}{2} - \epsilon} \norm[\sob{\frac{3}{2}-\epsilon}{\Omega}]{\tilde{u}} + \sqrt{\eta} \norm[\leb{2}{\gamma}]{\lambda}\right);
    \end{equation*}
    if $\tilde{u} \in \sob{\frac{3}{2}-\epsilon}{\Omega}$ and $\lambda \in \sob{\frac{1}{2}}{\xi_i}$, with $\epsilon > 0$:
     \begin{equation*}
        \norm[X]{\tilde{u}-u_h} + \norm[M]{\lambda - \lambda_\eta}  \leq C \left( h^{\frac{1}{2} - \epsilon} \norm[\sob{\frac{3}{2}-\epsilon}{\Omega}]{\tilde{u}} + \eta \left( \sum_{i=1}^N \norm[\sob{\frac{1}{2}}{\xi_i}]{\lambda}^2 \right)^{\frac{1}{2}} \right);
    \end{equation*}
    if $\tilde{u} \in \sob{2}{\Omega}$ (then $\lambda=0$):
     \begin{equation*}
        \norm[X]{\tilde{u}-u_h} + \norm[M]{\lambda - \lambda_\eta}  \leq C h \norm[\sob{2}{\Omega}]{\tilde{u}}.
    \end{equation*}
\end{theorem}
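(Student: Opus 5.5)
The plan is to apply the abstract theory of saddle-point problems from \cite{BoffiBrezziFortin2013}, in the form used in \cite{Girault1995}. Uniform ellipticity~\eqref{eq:ellipticity} of $a_\Omega$ on the discrete kernel $V_h$ is available from \cite{Girault1995}, since $M_\eta$ contains the constants. The uniform discrete inf-sup condition~\eqref{eq:inf-sup} is Theorem~\ref{teo:FinaleTheorem}. Brezzi's theorem then gives existence and uniqueness of $(u_h,\lambda_\eta)$ and the quasi-optimality estimate, with a constant depending only on $\kappa^*$, $\beta^*$ and the continuity constants of $a_\Omega$ and $b$. In particular the constant is independent of $h$ and $\eta$. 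So the first estimate follows directly, and the remaining three reduce to bounding the two best-approximation errors.

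For the primal term, I will use $X_h\supset\Poly{1}{}$-FEM and take $v_h$ to be the Cl\'ement (or Scott--Zhang) interpolant $R_h\tilde u$. The estimates~\eqref{eq:estimates on the clement operator} with $m=2$ give $Ch\norm[\sob{2}{\Omega}]{\tilde u}$. Interpolating between $m=1$ and $m=2$ (real interpolation of the operator $I-R_h$) gives $Ch^{\frac12-\epsilon}\norm[\sob{\frac32-\epsilon}{\Omega}]{\tilde u}$. Local quasi-uniformity is used to sum the local bounds into a global one with $h=\max h_T$. In the $\sob{2}{\Omega}$ case the jump $\lambda$ of the co-normal derivative vanishes, so $\lambda=0$ and the multiplier term drops out.

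For the dual term, I will take $\mu_\eta=P_\eta\lambda$, the $\leb{2}{\gamma}$-projection onto piecewise constants on $\mathcal{S}_\eta$. By duality, for $\theta\in\sob{\frac12}{\gamma}$,
\[
\dual{\lambda-P_\eta\lambda}{\theta}=\scal[\gamma]{\lambda-P_\eta\lambda}{\theta-P_\eta\theta}\le \norm[\leb{2}{\gamma}]{\lambda-P_\eta\lambda}\,\norm[\leb{2}{\gamma}]{\theta-P_\eta\theta}.
\]
The standard estimate $\norm[\leb{2}{\gamma}]{\theta-P_\eta\theta}\le C\eta^{1/2}\norm[\sob{\frac12}{\gamma}]{\theta}$ gives $\norm[M]{\lambda-P_\eta\lambda}\le C\sqrt\eta\norm[\leb{2}{\gamma}]{\lambda}$. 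If in addition $\lambda\in\sob{\frac12}{\xi_i}$ on each boundary segment, the factor $\norm[\leb{2}{\gamma}]{\lambda-P_\eta\lambda}$ gains another $\eta^{1/2}$, which yields the $\eta$ rate. Here one must handle the elements $S$ that straddle a corner of $\gamma$, where $\lambda$ is only piecewise $\sob{\frac12}{}$. I will apply the approximation estimate elementwise, using a fractional Poincar\'e inequality on each $S\cap\xi_i$, and then sum over $i$.

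The main obstacle is this last fractional-order estimate on the non-uniform, agglomerated boundary mesh $\mathcal{S}_\eta$. Its elements can be unions of pieces from different segments $\xi_i$ and can have very different lengths. The plan is to use only the upper bound $|S|\le\eta$ together with local, elementwise Poincar\'e and duality arguments, so that no quasi-uniformity of $\mathcal{S}_\eta$ is needed. For the $\theta$-estimate I will bound the local $\leb{2}$ error by the Gagliardo seminorm restricted to $S\times S$, which is controlled by the global $\sob{\frac12}{\gamma}$ norm.
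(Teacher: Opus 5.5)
Your overall route is what the paper means by ``following the methodology in \cite{Girault1995}''. You combine Brezzi's theory, coercivity on $V_h$ from \cite{Girault1995} and the inf-sup condition of Theorem~\ref{teo:FinaleTheorem}, then use Cl\'ement interpolation for $\tilde u$ and the $L^2(\gamma)$-projection $P_\eta$ plus duality for $\lambda$. The quasi-optimality bound, the $\sqrt{\eta}$ estimate and the $H^2$ case go through as you describe. In particular, your $S\times S$ Gagliardo bound for $\theta-\bar\theta_S$ only uses $|x-y|\le |S|$, so it is insensitive to agglomerated or broken-line elements.

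The gap is in the $\eta$-rate step on elements $S$ that contain a vertex of $\gamma$. Such elements genuinely occur here, because elements of $\mathcal{S}_\eta$ are unions of portions of possibly several $\xi_i$ (the Appendix treats these broken lines explicitly). The paper offers nothing on this point beyond the reference to \cite{Girault1995}. A fractional Poincar\'e inequality on each piece $S\cap\xi_i$ controls only $\lambda$ minus its means on the pieces, whereas $M_\eta$ offers a single constant $\bar\lambda_S=P_\eta\lambda|_S$ on $S$. What remains is of size $|S|^{1/2}\,|\bar\lambda_{S\cap\xi_i}-\bar\lambda_{S\cap\xi_{i+1}}|$. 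This is not bounded by $\sum_j\|\lambda\|_{H^{1/2}(\xi_j)}$ uniformly in $\eta$, because $H^{1/2}\not\subset L^\infty$ in one dimension.

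In fact the bound you aim for is false. Let $\lambda=|\log s|^\alpha$ with $0<\alpha<\frac12$ on $\xi_i$ near the vertex ($s$ the distance to it, smoothly cut off), and $\lambda=0$ on $\xi_{i+1}$; then $\lambda\in H^{1/2}(\xi_j)$ for every $j$. Suppose the vertex is the arclength midpoint of an element $S$ with $|S|\ge c\,\eta$. Test with $\theta=\phi(t/|S|)$, where $t$ is the signed arclength from the vertex and $\phi\in C^\infty_c(-\frac12,\frac12)$ is odd and positive on $(0,\frac12)$. Then $\int_S\theta=0$, and $\|\theta\|_{H^{1/2}(\gamma)}\le C$ by scale invariance (the corner angle being bounded away from zero). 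For every $\mu_\eta\in M_\eta$,
\[
\langle\lambda-\mu_\eta,\theta\rangle=\int_0^{|S|/2}|\log t|^\alpha\,\phi(t/|S|)\,dt\ge c\,|S|\,\bigl|\log|S|\bigr|^\alpha ,
\]
so $\inf_{\mu_\eta\in M_\eta}\|\lambda-\mu_\eta\|_M\ge c\,\eta\,|\log\eta|^\alpha$. Hence the third estimate cannot be obtained, as you propose, from a best-approximation bound for $\lambda$ in piecewise $H^{1/2}$ norms alone.

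Along your lines it can be closed in two ways:
\begin{itemize}
\item Use a boundary mesh that has the vertices of $\gamma$ as nodes.
\item Assume extra regularity at the vertices. If $\lambda\in L^\infty$ near them (e.g.\ $\lambda\in H^{1/2+\delta}(\xi_i)$), each of the at most $N$ corner elements contributes $\int_S(\lambda-\bar\lambda_S)(\theta-\bar\theta_S)\le C|S|\,\|\lambda\|_{L^\infty(S)}\,|\theta|_{H^{1/2}(S)}$, with the Gagliardo seminorm on $S\times S$. This is $O(\eta)$.
\end{itemize}
With only piecewise $H^{1/2}$ regularity, the $L^p$ embedding with $p\sim|\log\eta|$ gives just $\eta\sqrt{|\log\eta|}$.
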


\begin{remark}
\label{remark:smoothness_preserving}
Unless we choose the extension to $\Omega$ of the right-hand side $f$ in a compatible way with the Dirichlet boundary condition on $\gamma$, we will not in general have $\tilde{u} \in \sob{2}{\Omega}$, since the normal derivative will jump across $\gamma$. The maximum overall regularity that we can in general expect is $\tilde{u} \in \sob{t}{\Omega}$, for $t < 3/2$. Note that $\tilde{u}$ will have a higher regularity if and only if $\lambda = 0$. More sophisticated methods can be adopted to overcome this limitation, as the one presented in \cite{mommer2006smoothness}.
\end{remark}

\section{Numerical results}
\label{sec:numericalresults}

All numerical experiments presented in this work have been performed using the library \texttt{PolyDiM} (POLYtopal DIscretization Methods)~\cite{Polydim}. The analysis presented in previous sections is validated by the following experiment, inspired by the numerical test introduced in \cite{burman2010fictitious,burman2012fictitious}, modified through the cut-off function proposed in \cite{berrone2016adaptive,berrone2019optimal}. 
  The goal is to establish the convergence of the method with the expected rate, and, at the same time, to show that the lack of enrichment of the discrete space with bubble functions leads to a discrete problem for which the inf-sup stability condition is not uniform, resulting in numerical issues. For the continuous problem, the domain $D$ is defined as a circle of radius $\rho_0 = \frac{1}{2}$ centered at $ \left(x_c, y_c\right):=\left(0.1, 0\right) $. The corresponding governing equation in $D$ is formulated as Problem~\eqref{eq:modelproblem} with $\nu = 1$ and $\alpha = 0$, where $f, g$ are computed based on the exact solution expressed in polar coordinates
  \[
u(r, \phi) = \chi(r)\frac{1}{9} \left( \rho_0^3 - r^3 \right), \quad r \in \left[0, \rho_0\right],\phi \in \left[0,2\pi\right].
\]
The cut-off function $\chi(r)$ has the following expression:
\[
\chi(r) = \frac{w(3/4 - r)}{w(r - 1/4) + w(3/4 - r)}, \quad 
w(r) = \begin{cases} 
r^2 & \text{if } r > 0, \\ 
0 & \text{else,} 
\end{cases}
\quad r \in \left[0, \rho_0\right].
\]

Using the notation of Section~\ref{sec:modelproblem}, we consider the problem posed on the extended domain $\Omega := \left[-1, 1\right]^2$, and we denote by $\tilde{u}$ its exact solution, i.e. the extension of $u$ computed over the whole domain $\Omega$. The extension $\tilde{f}$ is computed based on $\tilde{u}$, and consequently, $\tilde{f}|_D = f$. Boundary conditions are Neumann (zero) condition over all $\partial \Omega$. A numerical solution is shown in Figure~\ref{fig:solutionnumerical}. Since $\tilde{u} \in \sob{2}{\Omega}$, using Theorem~\ref{theo:TeoremaConvergenzaH1}, we expected the same convergence rate of FEM employing first order polynomial on a conform shape-regular triangulation on $D$. Note that it is also possible to extend the function $f$ in an arbitrary way so as to obtain a different exact solution on $\Omega$, together with corresponding boundary conditions. See Remark~\ref{remark:smoothness_preserving} for a discussion about this situation. 

In the following, Section~\ref{subsec:convergencerates} is devoted to 
the analysis of the convergence rates for the solution of the numerical experiment described above, which we will be denoted in what follows as the test problem, along with a brief investigation 
of the influence of the parameter $a$, introduced in 
Section~\ref{sec:boundarydiscret}, on the numerical errors. Section~\ref{subsec:StabilityNumericalResults}, investigates the role of 
bubble functions: we show that solving the test problem with the 
enrichment framework proposed in this work guaranties numerical 
stability, whereas its absence leads to instabilities.
\begin{figure}
    \centering
    \includegraphics[width=0.4\linewidth]{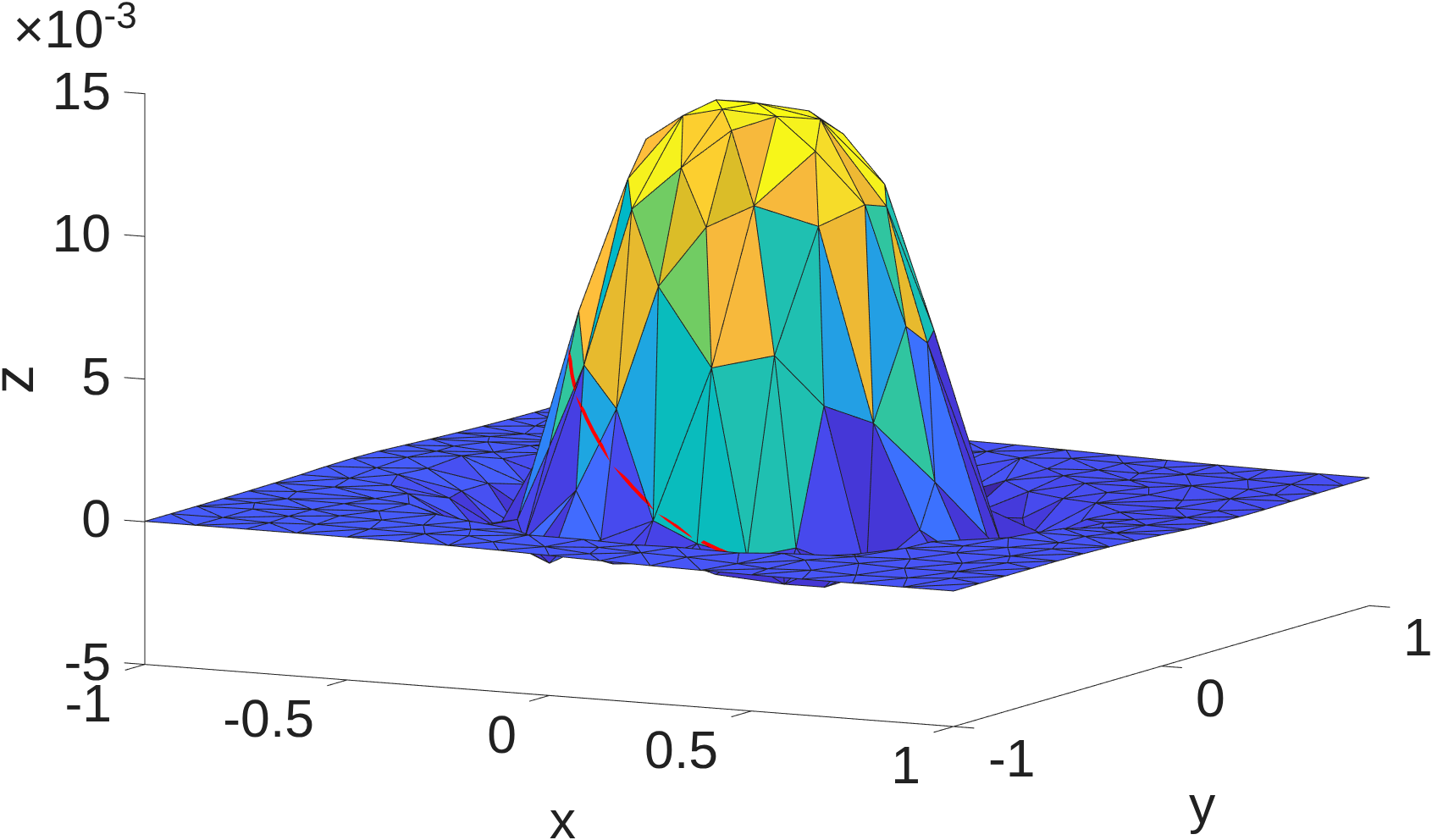}
    \hspace{1cm}
    \includegraphics[width=0.4\linewidth]{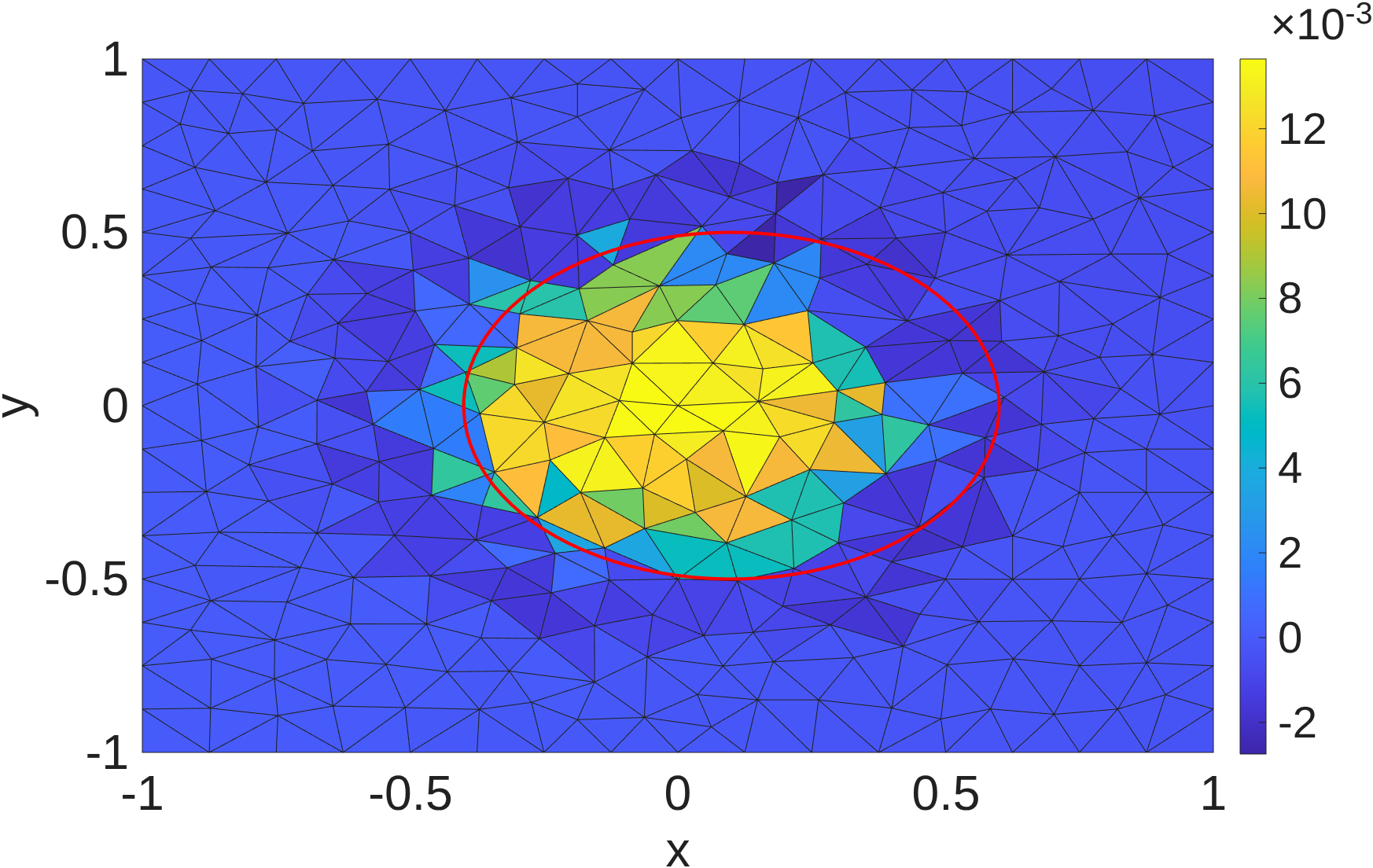}
    \caption{Numerical solution of the fictitious domain formulation~\eqref{eq:DiscreteMixedProblem} of the test problem, computed on a quasi-uniform shape-regular mesh with parameters $h=0.1$ and $a=0.1$. The left panel displays a three-dimensional surface plot of the discrete solution, while the right panel shows its top view over the domain $\Omega$. In both panels, the red circle denotes the boundary $\gamma$ before linearization.
}
    \label{fig:solutionnumerical}
\end{figure}

\subsection{Convergence rates}
\label{subsec:convergencerates}
Firstly, we solve the discrete problem using quasi-uniform shape-regular 
meshes with decreasing mesh size $h$. To assess the robustness of the 
method with respect to the parameter $a$, the experiments are performed for 
three representative values: a large value $a=0.25$, an intermediate 
value $a=0.1$, and a small value $a=0.0025$. The results are essentially 
identical for all three choices, confirming that the numerical 
performance of the method is robust with respect to this parameter. 
For each mesh, we compute the following standard error measures for 
$\tilde{u}$:
\begin{equation}
\label{eq:Errors}
    \mathrm{Err}_0 = \frac{\norm[\leb{2}{\Omega}]{\tilde{u} - u_h}}{\norm[\leb{2}{\Omega}]{\tilde{u}}},\qquad \mathrm{Err}_{\nabla} = \frac{\norm[\sob{1}{\Omega}]{\tilde{u} -  u_h}}{\norm[\sob{1}{\Omega}]{\tilde{u} }}.
\end{equation}
For the Lagrange multiplier $\lambda$, we approximate the 
$\sob{-\frac{1}{2}}{\gamma}$ error as
\begin{equation}
\label{eq:errorH-1/2}
    \mathrm{Err}_\gamma := \left( \sum_{S \in \mathcal{S}_\eta} |S| \norm[\leb{2}{S}]{\lambda-\lambda_\eta}^2 \right)^{\frac{1}{2}} \approx  \norm[\sob{-\frac{1}{2}}{\gamma}]{\lambda-\lambda_\eta}.
\end{equation}
\begin{figure}
    \centering
    \begin{subfigure}[b]{0.4\textwidth}
        \includegraphics[width=\linewidth]{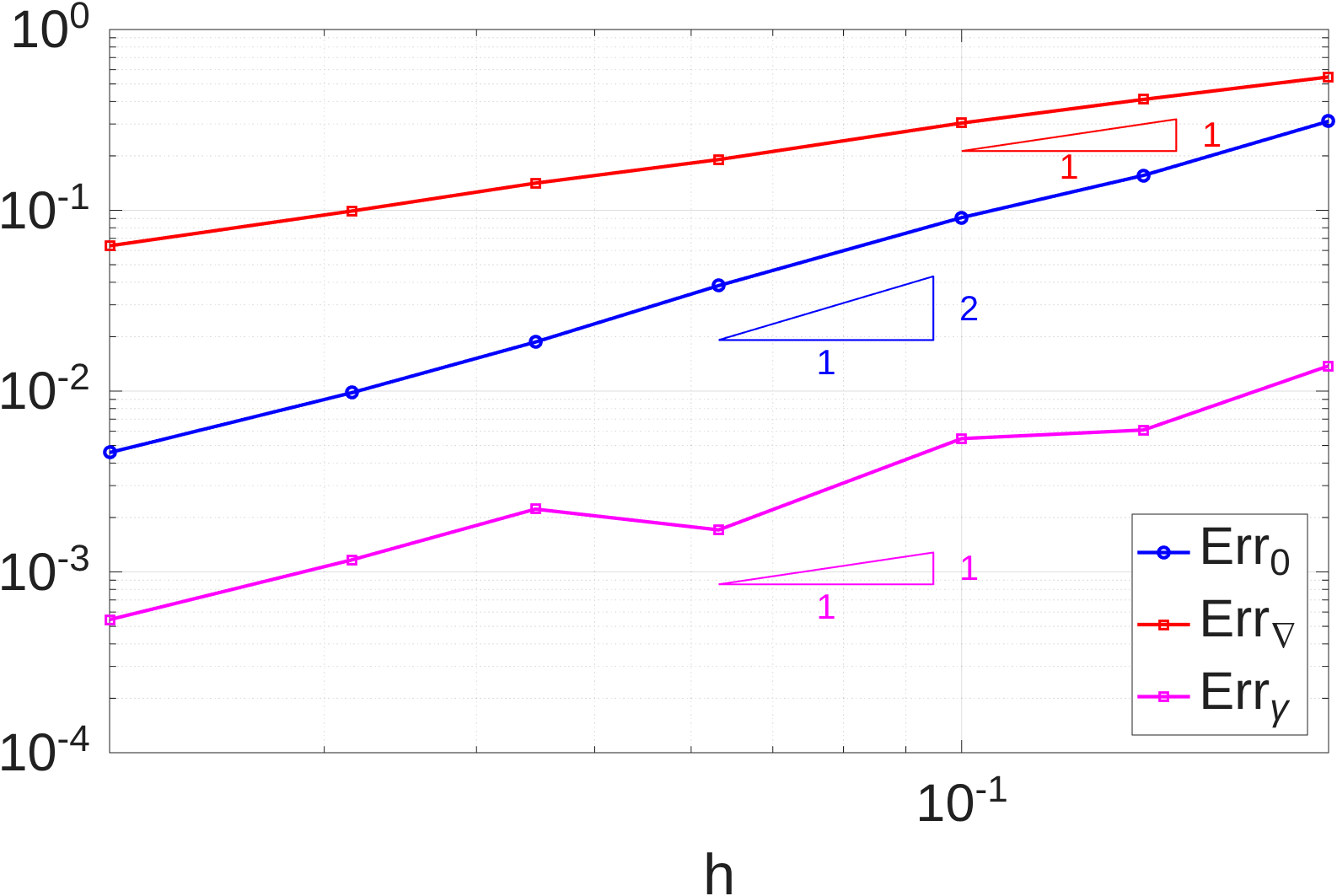}
        \caption{}
        \label{fig:convergence_curves_1}
    \end{subfigure}
    \hspace{1cm}
    \begin{subfigure}[b]{0.4\textwidth}
        \includegraphics[width=\linewidth]{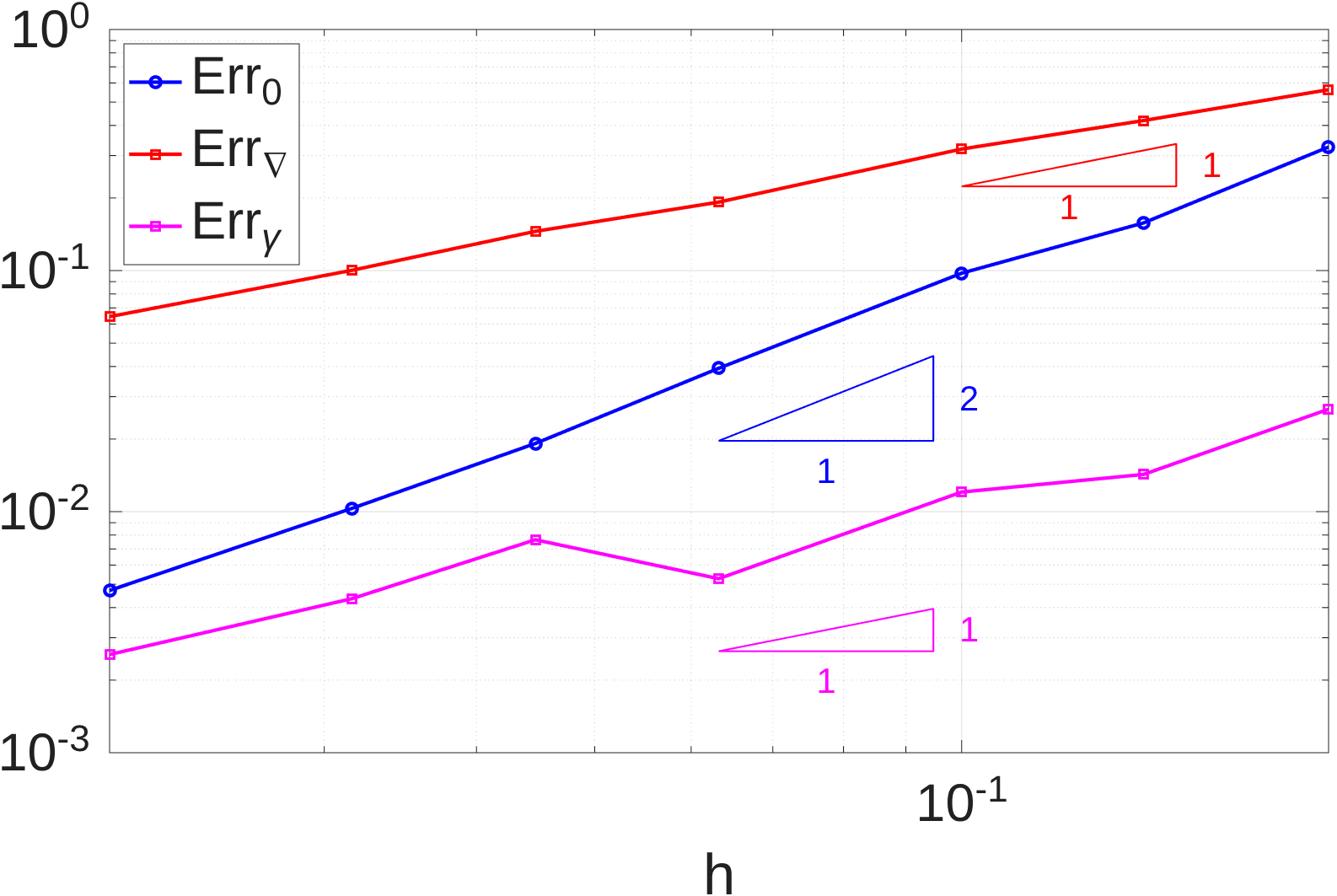}
        \caption{}
        \label{fig:convergence_curves_2}
    \end{subfigure}
    \hfill
    \begin{subfigure}[b]{0.4\textwidth}
        \includegraphics[width=\linewidth]{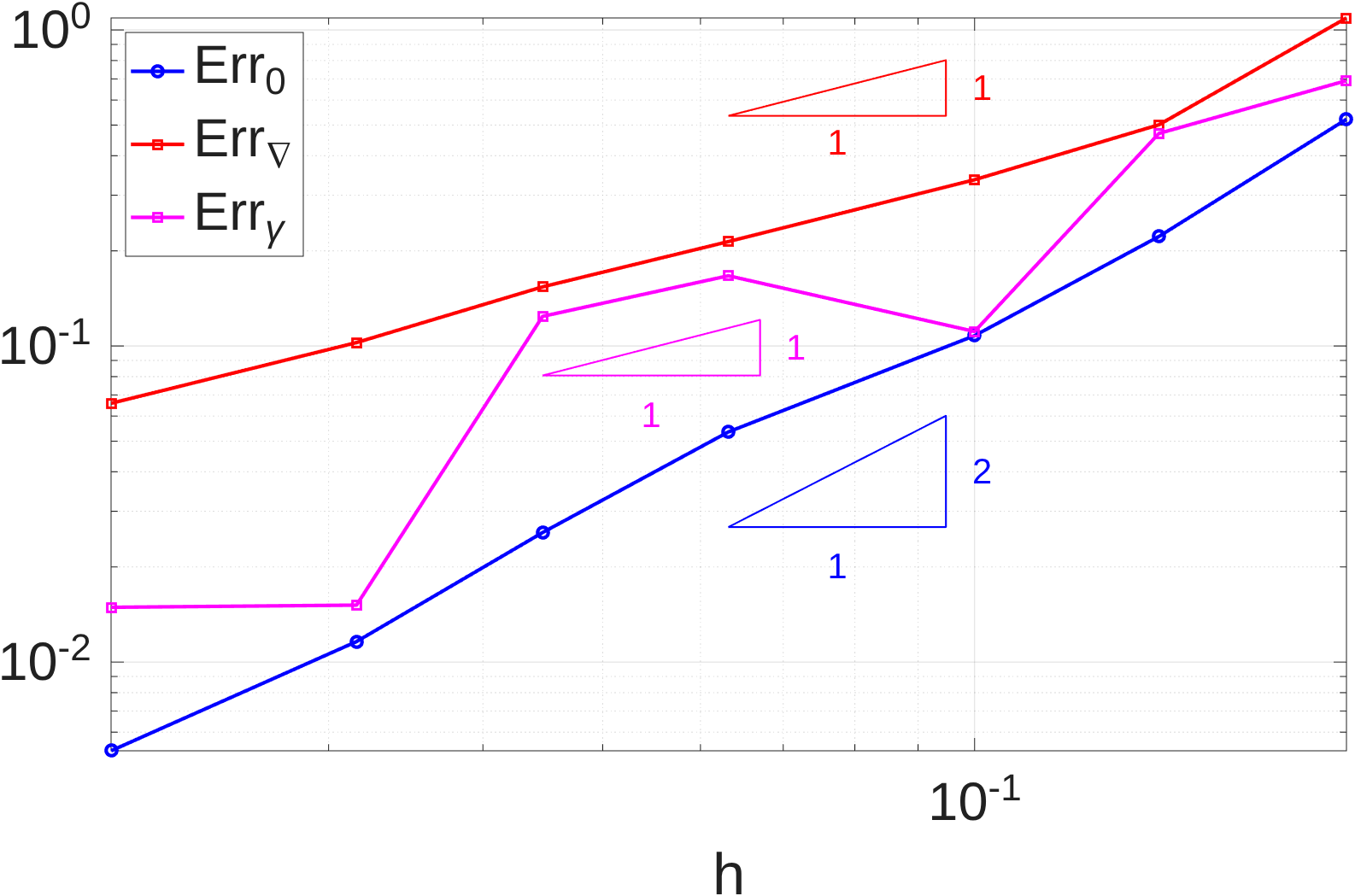}
        \caption{}
        \label{fig:convergence_curves_3}
    \end{subfigure}
    \caption{Behavior of the errors~\eqref{eq:Errors}--\eqref{eq:errorH-1/2} as functions of the mesh size $h$, computed using the fictitious domain formulation~\eqref{eq:DiscreteMixedProblem} of the test problem on quasi-uniform shape-regular meshes. Results are reported for three values of the parameter $a$: $a=0.25$ (a), $a=0.1$ (b), and $a=0.025$ (c).
}
    \label{fig:convergence_curves}
\end{figure}

As illustrated in Figure~\ref{fig:convergence_curves}, the numerical  errors converge with the rates established in  Theorem~\ref{theo:TeoremaConvergenzaH1}. Moreover, the errors associated with the solution $\tilde{u}$ are essentially unaffected by the variation of the parameter $a$. This is also confirmed by Figure~\ref{fig:ErroreH1VSa}, which shows that, across a range of meshes from the coarsest $M_1$ ($h = 0.2$) to the finest $M_4$ ($h = 0.02$), the $\mathrm{Err}_{0}$ error remains insensitive to $a$; the same behavior is observed for $\mathrm{Err}_{\nabla}$, which is therefore not reported. On the other hand, smaller values of $a$ slightly affect the error associated with the Lagrange multiplier, as can be observed by comparing the three $\mathrm{Err}_{\gamma}$ curves in Figure~\ref{fig:convergence_curves}: this is due to the fact that, for small values of $a$, the sub-triangles described in Section~\ref{sec:boundarydiscret} are larger, and elements of $\mathcal{S}_\eta$ of smaller length are accepted, leading to smaller values of the constant $C(a)$ appearing in Lemma~\ref{Lemma:LowerBound}, which, in turn, can pollute convergence.  As an example, 
Figure~\ref{fig:lagrange_multiplier}  shows the discrete Lagrange 
multiplier $\lambda_\eta$ computed for the three values of $a$: while 
for $a = 0.25$ the numerical solution remains close to the continuous 
solution $\lambda = 0$, decreasing values of $a$ introduce larger 
oscillations, particularly in the regions where elements of 
$\mathcal{S}_\eta$ of small length are present. Nevertheless, the 
convergence rate established in Theorem~\ref{theo:TeoremaConvergenzaH1} 
is preserved in all cases. Furthermore, numerical stability is always 
guaranteed, as discussed in the following subsection.

\begin{figure}
    \centering
    \includegraphics[width=0.5\linewidth]{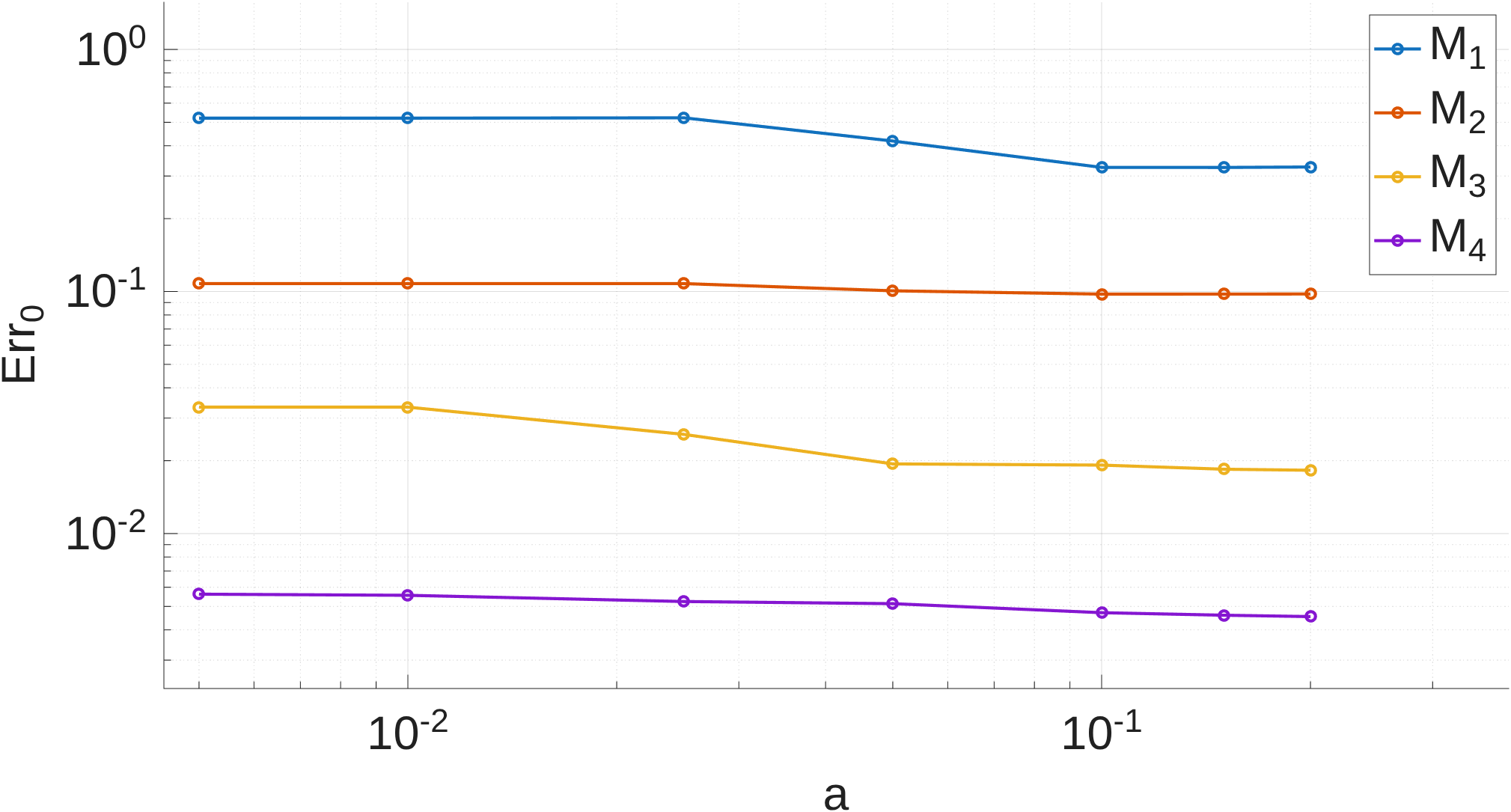}
    \caption{Behavior of the error $\mathrm{Err}_0$~\eqref{eq:Errors}, computed for the fictitious domain formulation~\eqref{eq:DiscreteMixedProblem} of the test problem, as the parameter $a$ decreases. Each curve corresponds to a different mesh size $h$ used to generate the triangulation $\mathcal{T}_h$. Here, $M_1,\dots,M_4$ denote quasi-uniform shape-regular triangulations ranging from the coarsest mesh $M_1$ ($h=0.2$) to the finest mesh $M_4$ ($h=0.02$).
}
    \label{fig:ErroreH1VSa}
\end{figure}

\begin{figure}
    \centering
    \begin{subfigure}[b]{0.4\textwidth}
        \includegraphics[width=\linewidth]{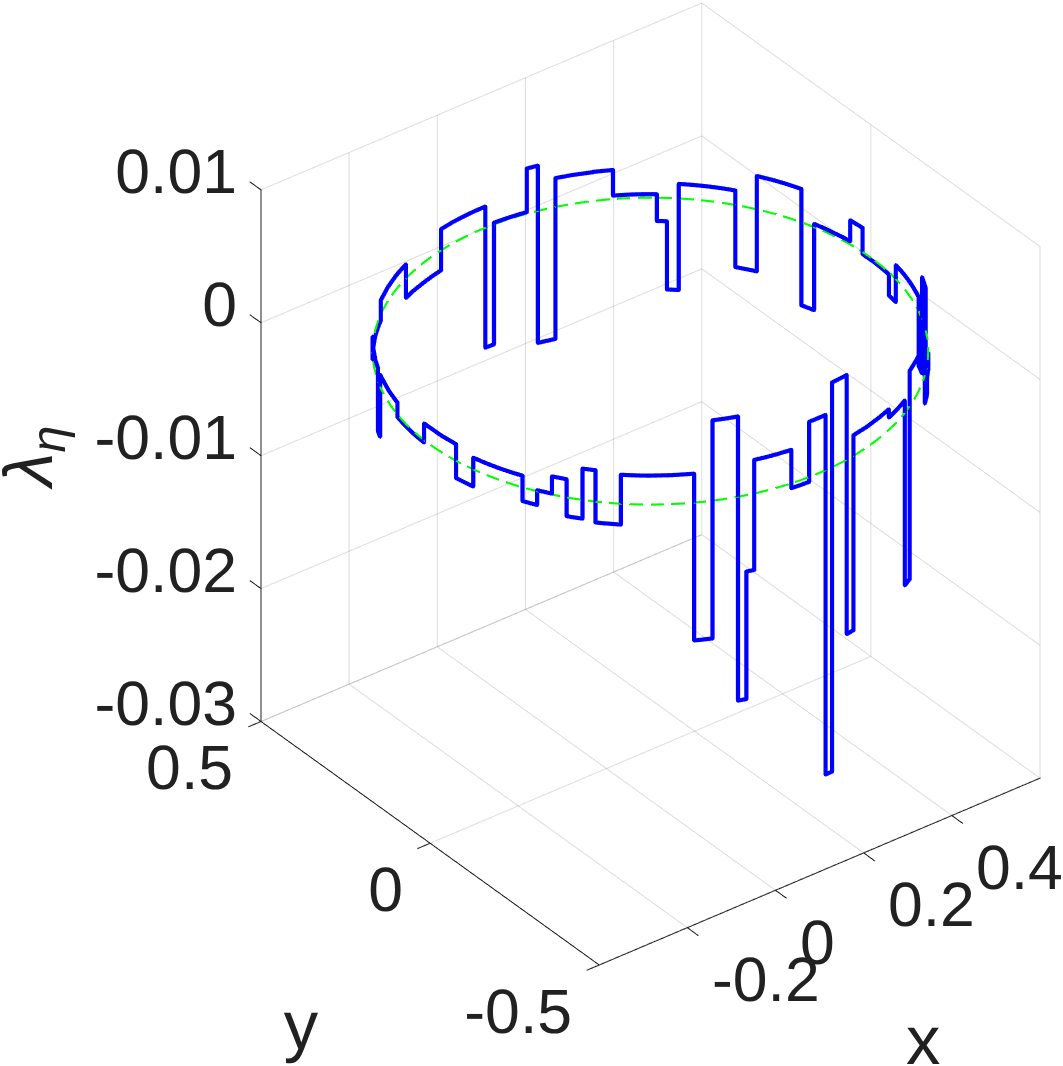}
        \caption{}
        \label{fig:lambda_1}
    \end{subfigure}
    \hspace{1cm}
    \begin{subfigure}[b]{0.4\textwidth}
        \includegraphics[width=\linewidth]{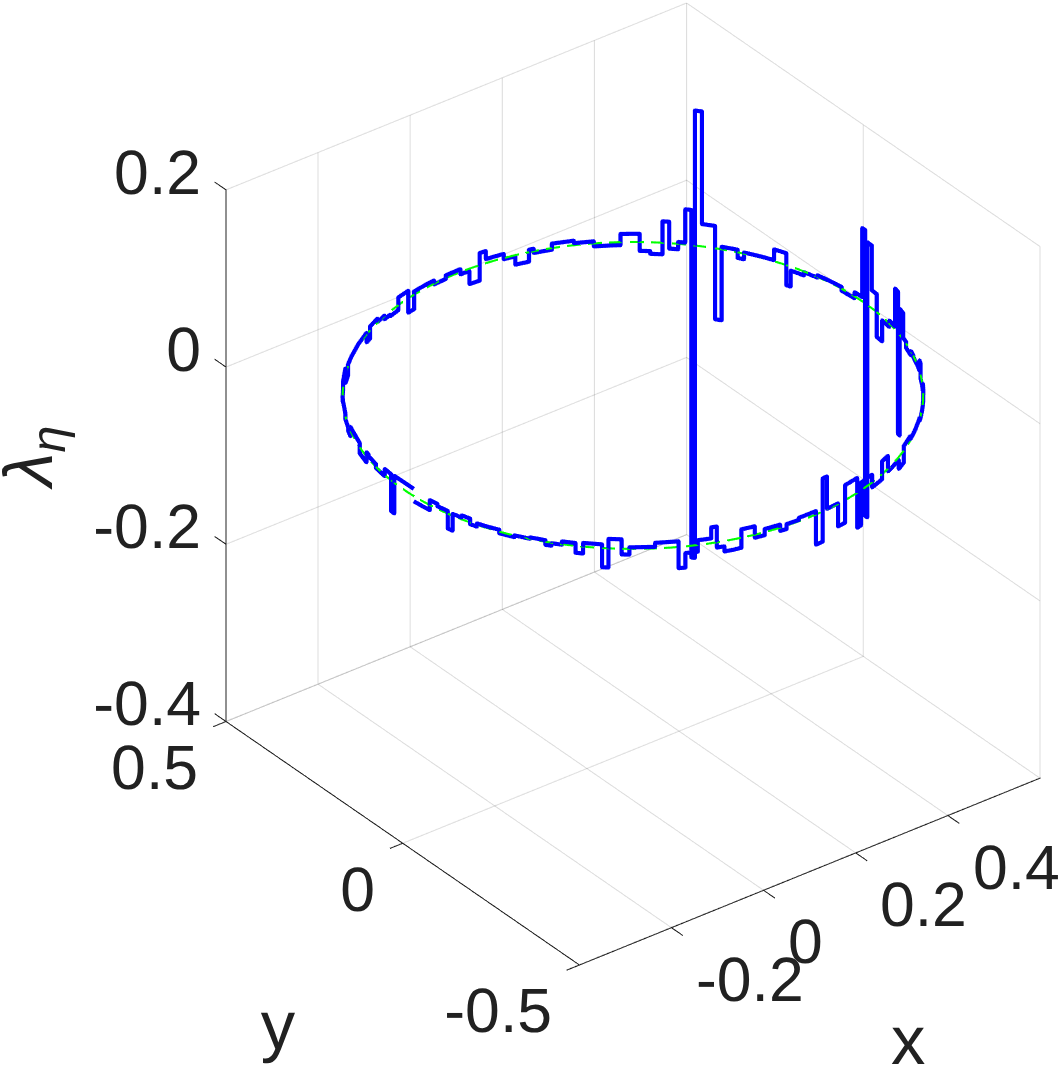}
        \caption{}
        \label{fig:lambda_2}
    \end{subfigure}
    \hfill
    \begin{subfigure}[b]{0.4\textwidth}
        \includegraphics[width=\linewidth]{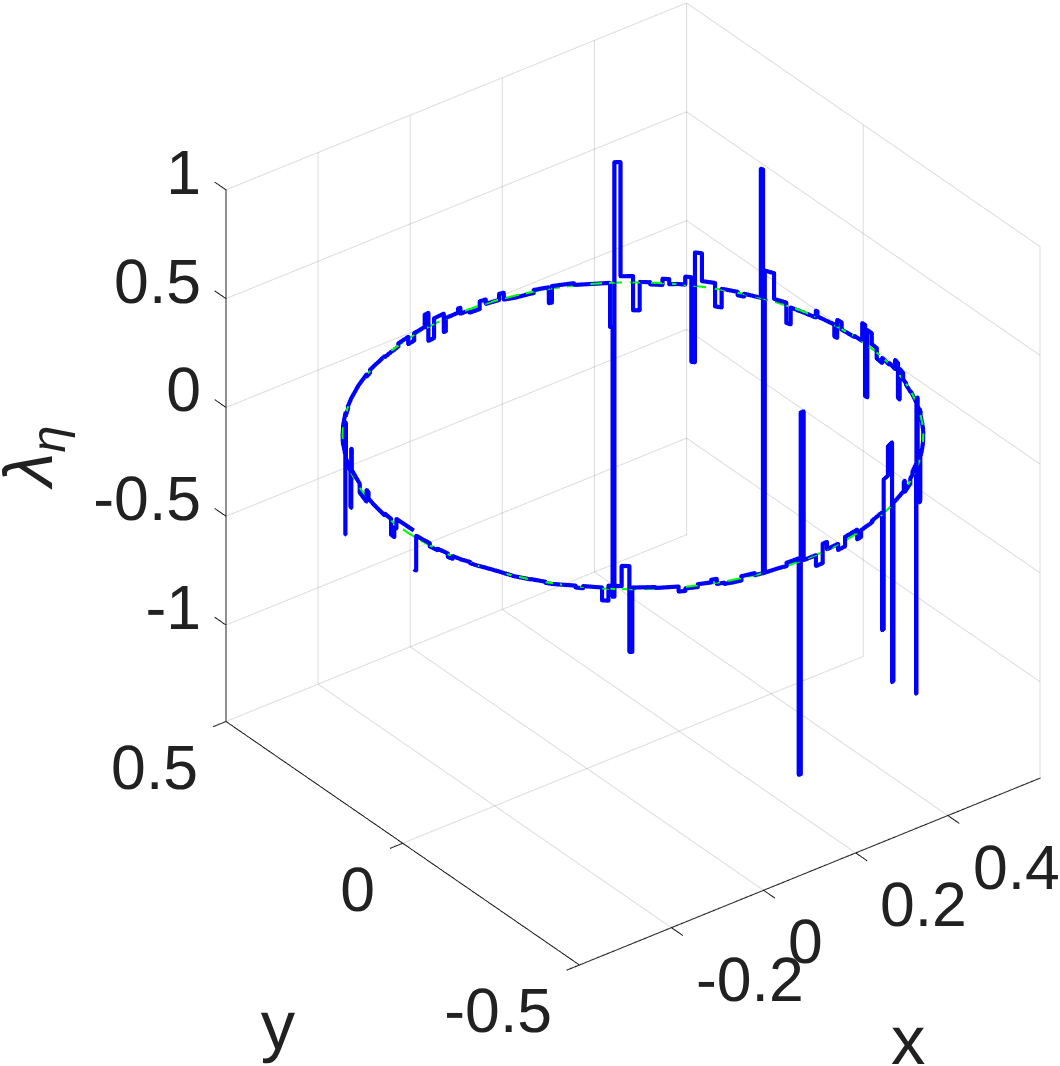}
        \caption{}
        \label{fig:lambda_3}
    \end{subfigure}
    \caption{Behavior of $\lambda_\eta$, computed for the fictitious domain formulation~\eqref{eq:DiscreteMixedProblem} of the test problem on a quasi-uniform, shape-regular mesh with $h=0.0316$ ($M_3$ of the meshes of Figure~\ref{fig:ErroreH1VSa}) for three values of the parameter $a$: $a=0.25$ (a), $a=0.1$ (b), and $a=0.025$ (c). The thin green dashed line denotes the continuous solution $\lambda = 0$.
}
    \label{fig:lagrange_multiplier}
\end{figure}

\subsection{Stability}
\label{subsec:StabilityNumericalResults}
Here we investigate the sensitivity of the system with respect to the variation of the parameter $a$ introduced in Section~\ref{sec:boundarydiscret}, highlighting how the absence of bubble functions, i.e. the lack of the enrichment framework proposed in this work, leads to a severe loss of numerical stability. Figure~\ref{fig:condition_curves} reports the condition number of the saddle-point system matrix as a function of the parameter $a$ for a sequence of meshes with fixed mesh sizes, ranging from the coarsest mesh $M_1$ ($h=0.2$) to the finest mesh $M_4$ ($h=0.02$); for each mesh $\mathcal{T}_h$ in this sequence, the corresponding boundary discretization $\mathcal{S}_\eta$ is constructed using the algorithm described in Section~\ref{sec:discretization}. The results clearly show that the inclusion of bubble functions significantly improves the conditioning of the resulting linear system, reflecting the enhanced stability properties of the proposed enriched formulation. Conversely, when bubble functions are not employed, the condition number deteriorates substantially, indicating a marked loss of stability and, in some cases, leading to a completely unstable discrete problem.

\begin{figure}
    \centering
    \includegraphics[width=0.4\linewidth]{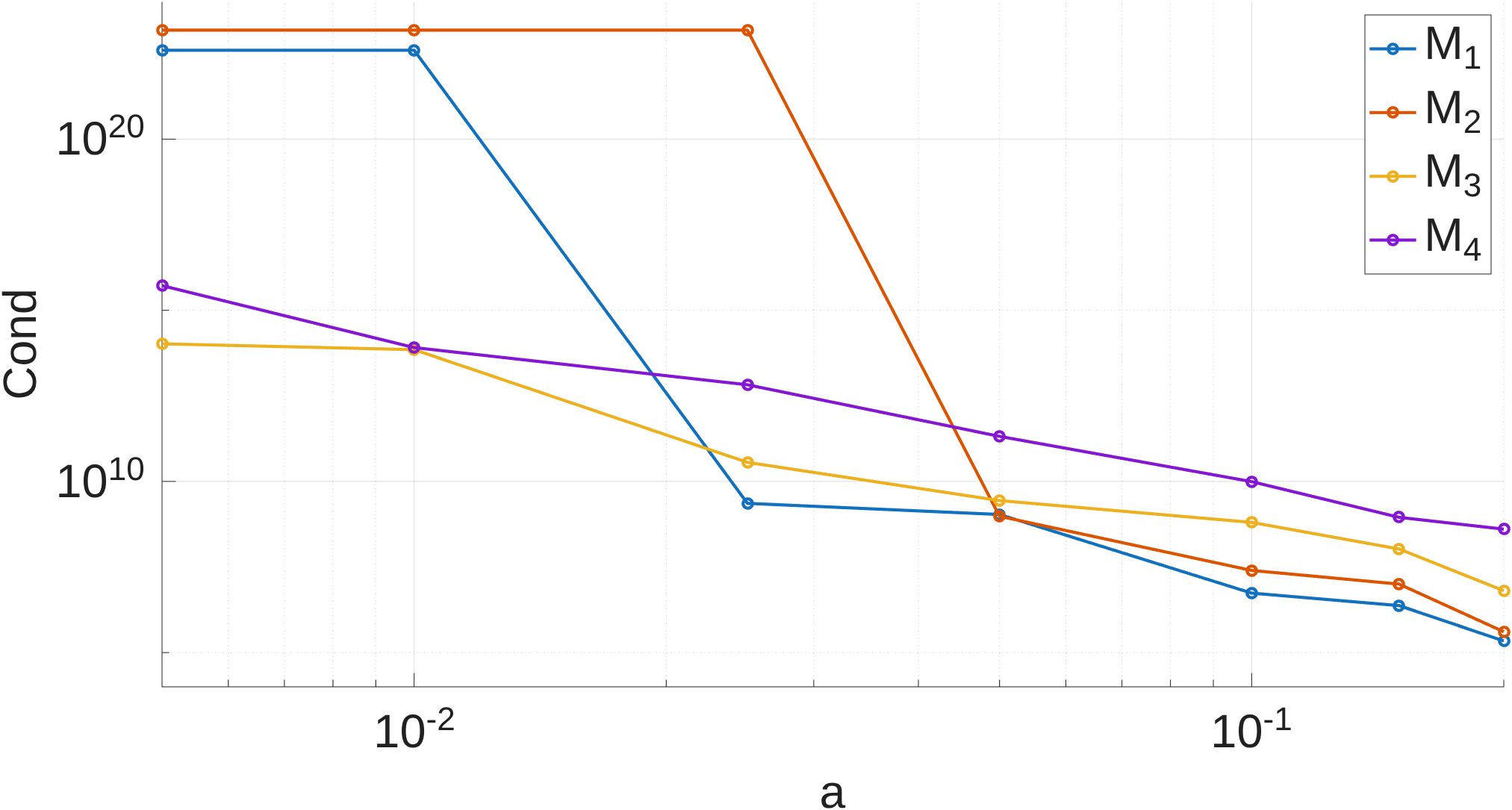}
    \hspace{1cm}\includegraphics[width=0.4\linewidth]{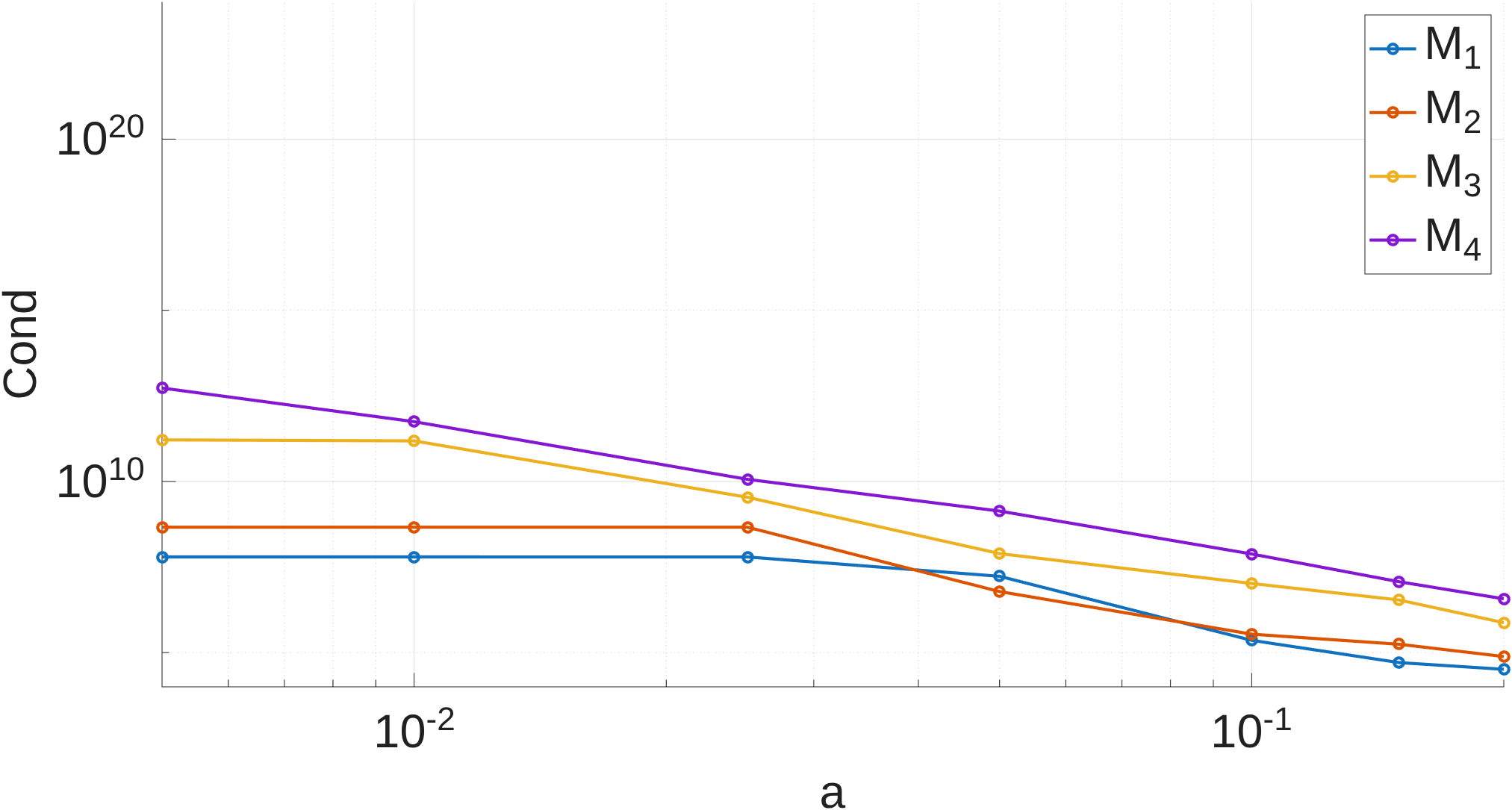}
    \caption{Behavior of the condition number of the matrix associated with the saddle-point problem~\eqref{eq:DiscreteMixedProblem} as $a$ decreases, computed for the test problem. Each curve corresponds to a different value of the parameter $h$ used to generate $\mathcal{T}_h$. Here $M_1, \dots, M_4$ denote quasi-uniform shape-regular triangulations ranging from the coarsest mesh $M_1$ ($h=0.2$) to the finest $M_4$ ($h=0.02$), and the corresponding boundary discretization $\mathcal{S}_\eta$ is constructed with the algorithm described in Section~\ref{sec:discretization}. The same meshes are used in both panels. Left: results obtained without enriching the $\Poly{1}{}$ finite element space with bubble functions. Right: results obtained using the proposed bubble-enriched finite element space.}
 
    \label{fig:condition_curves}
\end{figure}

\section{Conclusions}
\label{sec:conclusion}
In this work, we have presented and analyzed a stabilized fictitious domain method for the numerical approximation of elliptic problems with Dirichlet boundary conditions. The proposed approach successfully addresses and relaxes the restrictive mesh requirements traditionally associated with Lagrange multiplier-based unfitted methods. Specifically, by moving beyond the constraints established in \cite{Girault1995}, we have demonstrated that the method remains robust and well-posed on locally quasi-uniform background triangulations. Moreover, the proposed construction eliminates the need for two distinct meshes governed by two independent parameters. Indeed, the boundary mesh is derived from and controlled by the background mesh on $\Omega$. The numerical experiments confirm that the proposed method achieves the expected convergence rates. They also demonstrate that, in order to define a boundary discretization strategy entirely governed by the bulk mesh on $\Omega$, the enrichment of the classical $\mathbb{P}_1$ discrete space with bubble functions is essential to reach stability.

\section*{Acknowledgments}

The author S.B. kindly acknowledges partial financial support provided by European Union through project Next Generation EU, M4C2, PRIN 2022 PNRR project P2022BH5CB\_001 ``Polyhedral Galerkin methods for engineering applications to improve disaster risk forecast and management: stabilization-free operator-preserving methods and optimal stabilization methods'', and by PNRR M4C2 project of CN00000013 National Centre for HPC, Big Data and Quantum Computing (HPC) (CUP: E13C22000990001). The author L.N. acknowledges the financial support provided by INdAM-GNCS Project ``Metodi numerici politopali stabilization-free e neural-based per problemi accoppiati e non lineari'' (CUP: E53C25002010001). The author F.V. acknowledges the financial support by INdAM-research group GNCS, project title: ``Metodi numerici integrati per la simulazione e la prevenzione del dissesto idrogeologico'' (CUP: E53C25002010001).

S.B., L.N., S.S, and F.V. are members of the GNCS-INdAM Group.

\appendix 
\section*{Appendix}
\label{appendix}
We now present the proof of~\eqref{eq:stimexappendix}.  It is completely equivalent to work in the $(\hat{x}, \hat{y})$ coordinate system, since the action of the affine transformation does not affect the way in which $S$ intersects the subtriangle, being linear. The notation used in this proof is outlined in Figure~\ref{fig:NomenclTriangRef}. For the sake of simplicity, we restrict our focus to the case where element $S \in \mathcal{S}_\eta$ is entirely contained in $T$ (i.e. $S_T \equiv S$) and consists of at most two  segments. An exhaustive overview of all possible cases is showed in Figure~\ref{fig:TabellaGraficaTuttiCasi}. The treatment of alternative cases follows an identical procedure. 
To obtain a lower bound, we employ the properties of the bubble function: $\hat{\psi}_S|_{t_{\hat{T}}}$ has minimum values on the subtriangle vertices $D,E,F$, it is strictly concave in the reference triangle $\hat{T}$ and it is non-negative. Therefore, a lower bound for the integral is provided when $\hat{S}$ is  composed by two segments and lies totally in $\hat{T} \setminus\mathring{t}_{\hat{T}}$. In particular, if we compute explicitly the integral only over a segment $\overline{S} \subset \hat{S}$, we obtain a lower bound for $I_{\hat{S}} $, i.e.  $I_{\hat{S}} \geq I_{\overline{S}}$.  All the possible configurations of the segment $\overline{S}$ within $\hat{T}$ must be considered, as in Figure~\ref{fig:BrokenLineCasistiche}. In the following, the integrals $I_{\hat{S}}$ are evaluated as functions of the slope of $\hat{S}$, since $a$ is fixed.

\begin{figure}
\centering

\tikzset{every picture/.style={line width=0.75pt}} %set default line width to 0.75pt        

\begin{tikzpicture}[x=0.75pt,y=0.75pt,yscale=-1,xscale=1]
%uncomment if require: \path (0,190); %set diagram left start at 0, and has height of 190

%Shape: Right Triangle [id:dp4347346177064999] 
\draw  [color={rgb, 255:red, 74; green, 144; blue, 226 }  ,draw opacity=1 ] (282.71,43.96) -- (397.79,159.84) -- (282.71,159.84) -- cycle ;
%Shape: Right Triangle [id:dp5720803968309729] 
\draw  [color={rgb, 255:red, 74; green, 144; blue, 226 }  ,draw opacity=1 ] (294.22,67.6) -- (374.77,148.72) -- (294.22,148.72) -- cycle ;

%Straight Lines [id:da44398161487153587] 
\draw  [dash pattern={on 0.84pt off 2.51pt}]  (282.71,159.84) -- (442.07,159.68) ;
\draw [shift={(445.07,159.68)}, rotate = 179.94] [fill={rgb, 255:red, 0; green, 0; blue, 0 }  ][line width=0.08]  [draw opacity=0] (8.93,-4.29) -- (0,0) -- (8.93,4.29) -- cycle    ;
%Straight Lines [id:da7496283815907522] 
\draw  [dash pattern={on 0.84pt off 2.51pt}]  (282.71,159.84) -- (283.01,10.14) ;
\draw [shift={(283.01,7.14)}, rotate = 90.11] [fill={rgb, 255:red, 0; green, 0; blue, 0 }  ][line width=0.08]  [draw opacity=0] (8.93,-4.29) -- (0,0) -- (8.93,4.29) -- cycle    ;
%Straight Lines [id:da7383668684042818] 
\draw    (294.06,161.67) -- (294.06,158.18) ;
%Straight Lines [id:da13745325329856095] 
\draw    (375.86,161.63) -- (375.86,158.13) ;
%Straight Lines [id:da08763780467367199] 
\draw    (281.1,149) -- (284.44,149.03) ;
%Straight Lines [id:da8373804852354975] 
\draw    (284.11,67.59) -- (281.35,67.58) ;
%Straight Lines [id:da11525121190340859] 
\draw    (282.71,43.96) -- (397.79,159.84) ;
%Straight Lines [id:da6574490464265177] 
\draw    (282.71,43.96) -- (282.71,159.84) ;
%Straight Lines [id:da86869886490759] 
\draw    (282.71,159.84) -- (397.79,159.84) ;

% Text Node
\draw (273.22,162.79) node [anchor=north west][inner sep=0.75pt]  [font=\scriptsize]  {$A$};
% Text Node
\draw (396.09,161.93) node [anchor=north west][inner sep=0.75pt]  [font=\scriptsize]  {$B$};
% Text Node
\draw (270.89,37.7) node [anchor=north west][inner sep=0.75pt]  [font=\scriptsize]  {$C$};
% Text Node
\draw (376.31,148.09) node [anchor=north west][inner sep=0.75pt]  [font=\tiny,color={rgb, 255:red, 74; green, 144; blue, 226 }  ,opacity=1 ]  {$E$};
% Text Node
\draw (286.5,149.41) node [anchor=north west][inner sep=0.75pt]  [font=\tiny,color={rgb, 255:red, 74; green, 144; blue, 226 }  ,opacity=1 ]  {$D$};
% Text Node
\draw (286.68,60.83) node [anchor=north west][inner sep=0.75pt]  [font=\tiny,color={rgb, 255:red, 74; green, 144; blue, 226 }  ,opacity=1 ]  {$F$};
% Text Node
\draw (255.83,93.85) node [anchor=north west][inner sep=0.75pt]  [font=\normalsize]  {$\hat{T}$};
% Text Node
\draw (320.09,98.86) node [anchor=north west][inner sep=0.75pt]  [font=\normalsize]  {$\textcolor[rgb]{0.29,0.56,0.89}{\hat{t}}\textcolor[rgb]{0.29,0.56,0.89}{_{\hat{T}}}$};
% Text Node
\draw (291.25,169.76) node [anchor=north west][inner sep=0.75pt]  [font=\tiny]  {$a$};
% Text Node
\draw (265.72,148.79) node [anchor=north west][inner sep=0.75pt]  [font=\tiny]  {$a$};
% Text Node
\draw (251.28,65.93) node [anchor=north west][inner sep=0.75pt]  [font=\tiny]  {$1-2a$};
% Text Node
\draw (425.11,149.47) node [anchor=north west][inner sep=0.75pt]  [font=\tiny]  {$\hat{x}$};
% Text Node
\draw (270.63,20.54) node [anchor=north west][inner sep=0.75pt]  [font=\tiny]  {$\hat{y}$};
% Text Node
\draw (367.88,170.33) node [anchor=north west][inner sep=0.75pt]  [font=\tiny]  {$1-2a$};

\end{tikzpicture}
    \caption{Reference triangle and subtriangle constructed on the reference triangle. The geometrical features associated with the subtriangle are highlighted in blue.}
    \label{fig:NomenclTriangRef}
\end{figure}

\begin{figure}
    \centering

\tikzset{every picture/.style={line width=0.75pt}} %set default line width to 0.75pt        

\begin{tikzpicture}[x=0.75pt,y=0.75pt,yscale=-1,xscale=1]
%uncomment if require: \path (0,641); %set diagram left start at 0, and has height of 641

%Shape: Right Triangle [id:dp7338714417700285] 
\draw   (0,0.6) -- (157.62,155.61) -- (0,155.61) -- cycle ;
%Shape: Right Triangle [id:dp5322793412315244] 
\draw   (15.76,32.22) -- (126.09,140.73) -- (15.76,140.73) -- cycle ;

%Shape: Right Triangle [id:dp6203593297437374] 
\draw   (387.32,1.17) -- (544.93,156.18) -- (387.32,156.18) -- cycle ;
%Shape: Right Triangle [id:dp03394821564212247] 
\draw   (403.08,32.79) -- (513.41,141.3) -- (403.08,141.3) -- cycle ;

%Shape: Right Triangle [id:dp8540990408939849] 
\draw   (193.7,216.21) -- (351.32,371.22) -- (193.7,371.22) -- cycle ;
%Shape: Right Triangle [id:dp6094673162798714] 
\draw   (209.47,247.83) -- (319.8,356.34) -- (209.47,356.34) -- cycle ;

%Shape: Right Triangle [id:dp9798253277085782] 
\draw   (0,216.21) -- (157.62,371.22) -- (0,371.22) -- cycle ;
%Shape: Right Triangle [id:dp7562579511018779] 
\draw   (15.76,247.83) -- (126.09,356.34) -- (15.76,356.34) -- cycle ;

%Shape: Right Triangle [id:dp2555530582568707] 
\draw   (387.32,216.21) -- (544.93,371.22) -- (387.32,371.22) -- cycle ;
%Shape: Right Triangle [id:dp8695425719542175] 
\draw   (403.08,247.83) -- (513.41,356.34) -- (403.08,356.34) -- cycle ;

%Shape: Right Triangle [id:dp80426319578562] 
\draw   (0,431.25) -- (157.62,586.26) -- (0,586.26) -- cycle ;
%Shape: Right Triangle [id:dp900329347038771] 
\draw   (15.76,462.87) -- (126.09,571.38) -- (15.76,571.38) -- cycle ;

%Shape: Right Triangle [id:dp5693156129666075] 
\draw   (193.7,431.25) -- (351.32,586.26) -- (193.7,586.26) -- cycle ;
%Shape: Right Triangle [id:dp050293723400847035] 
\draw   (209.47,462.87) -- (319.8,571.38) -- (209.47,571.38) -- cycle ;

%Shape: Right Triangle [id:dp8052434576040859] 
\draw   (387.32,431.25) -- (544.93,586.26) -- (387.32,586.26) -- cycle ;
%Shape: Right Triangle [id:dp4260927266831346] 
\draw   (403.08,462.87) -- (513.41,571.38) -- (403.08,571.38) -- cycle ;

%Shape: Right Triangle [id:dp1536652845811466] 
\draw   (193.7,0.6) -- (351.32,155.61) -- (193.7,155.61) -- cycle ;
%Shape: Right Triangle [id:dp3678078969319655] 
\draw   (209.47,32.22) -- (319.8,140.73) -- (209.47,140.73) -- cycle ;

%Straight Lines [id:da6402969920092466] 
\draw [color={rgb, 255:red, 208; green, 2; blue, 27 }  ,draw opacity=1 ]   (5.6,6.54) -- (16.93,155.88) ;
%Straight Lines [id:da36357316986277777] 
\draw [color={rgb, 255:red, 208; green, 2; blue, 27 }  ,draw opacity=1 ]   (193.6,99.96) -- (214.8,155.56) ;
%Straight Lines [id:da2816938207603994] 
\draw [color={rgb, 255:red, 208; green, 2; blue, 27 }  ,draw opacity=1 ]   (387.24,140.56) -- (539.24,150.96) ;
%Straight Lines [id:da6754382285218408] 
\draw [color={rgb, 255:red, 208; green, 2; blue, 27 }  ,draw opacity=1 ]   (137.24,371.52) -- (85.44,301.1) ;
%Straight Lines [id:da38679050420713745] 
\draw [color={rgb, 255:red, 208; green, 2; blue, 27 }  ,draw opacity=1 ]   (193.89,226.2) -- (334.74,371.23) ;
%Straight Lines [id:da9715495159525285] 
\draw [color={rgb, 255:red, 208; green, 2; blue, 27 }  ,draw opacity=1 ]   (387.31,362.71) -- (529.6,355.57) ;
%Straight Lines [id:da8958943486323538] 
\draw [color={rgb, 255:red, 208; green, 2; blue, 27 }  ,draw opacity=1 ]   (100.44,586.56) -- (136.04,565.96) ;
%Straight Lines [id:da7801634265719809] 
\draw [color={rgb, 255:red, 208; green, 2; blue, 27 }  ,draw opacity=1 ]   (193.4,510.8) -- (228.53,501.68) ;
%Straight Lines [id:da9510554589715262] 
\draw [color={rgb, 255:red, 208; green, 2; blue, 27 }  ,draw opacity=1 ]   (387,549.2) -- (402.8,552.8) ;
%Straight Lines [id:da9184091539683914] 
\draw [color={rgb, 255:red, 208; green, 2; blue, 27 }  ,draw opacity=1 ]   (387.8,565.2) -- (402.8,552.8) ;
%Straight Lines [id:da09068003176455208] 
\draw [color={rgb, 255:red, 208; green, 2; blue, 27 }  ,draw opacity=1 ]   (228.53,501.68) -- (289.73,526.48) ;

% Text Node
\draw (65.47,33.76) node [anchor=north west][inner sep=0.75pt]  [font=\normalsize,xscale=0.8,yscale=0.8] [align=left] {$\displaystyle \hat{T}$};
% Text Node
\draw (87.24,112.25) node [anchor=north west][inner sep=0.75pt]  [font=\normalsize,xscale=0.8,yscale=0.8]  {$t_{\hat{T}}$};
% Text Node
\draw (1.2,158.1) node [anchor=north west][inner sep=0.75pt]  [font=\tiny,xscale=0.8,yscale=0.8]  {$A$};
% Text Node
\draw (155.65,157.18) node [anchor=north west][inner sep=0.75pt]  [font=\tiny,xscale=0.8,yscale=0.8]  {$B$};
% Text Node
\draw (5.93,1.66) node [anchor=north west][inner sep=0.75pt]  [font=\tiny,xscale=0.8,yscale=0.8]  {$C$};
% Text Node
\draw (7.5,143.06) node [anchor=north west][inner sep=0.75pt]  [font=\tiny,xscale=0.8,yscale=0.8]  {$D$};
% Text Node
\draw (123.59,143.35) node [anchor=north west][inner sep=0.75pt]  [font=\tiny,xscale=0.8,yscale=0.8]  {$E$};
% Text Node
\draw (8.85,25.88) node [anchor=north west][inner sep=0.75pt]  [font=\tiny,xscale=0.8,yscale=0.8]  {$F$};
% Text Node
\draw (259.17,29.17) node [anchor=north west][inner sep=0.75pt]  [font=\normalsize,xscale=0.8,yscale=0.8] [align=left] {$\displaystyle \hat{T}$};
% Text Node
\draw (280.1,111.81) node [anchor=north west][inner sep=0.75pt]  [font=\normalsize,xscale=0.8,yscale=0.8]  {$t_{\hat{T}}$};
% Text Node
\draw (194.9,158.1) node [anchor=north west][inner sep=0.75pt]  [font=\tiny,xscale=0.8,yscale=0.8]  {$A$};
% Text Node
\draw (349.35,157.75) node [anchor=north west][inner sep=0.75pt]  [font=\tiny,xscale=0.8,yscale=0.8]  {$B$};
% Text Node
\draw (199.64,2.23) node [anchor=north west][inner sep=0.75pt]  [font=\tiny,xscale=0.8,yscale=0.8]  {$C$};
% Text Node
\draw (200.91,144.26) node [anchor=north west][inner sep=0.75pt]  [font=\tiny,xscale=0.8,yscale=0.8]  {$D$};
% Text Node
\draw (318.26,143.91) node [anchor=north west][inner sep=0.75pt]  [font=\tiny,xscale=0.8,yscale=0.8]  {$E$};
% Text Node
\draw (202.55,26.45) node [anchor=north west][inner sep=0.75pt]  [font=\tiny,xscale=0.8,yscale=0.8]  {$F$};
% Text Node
\draw (452.78,34.33) node [anchor=north west][inner sep=0.75pt]  [font=\normalsize,xscale=0.8,yscale=0.8] [align=left] {$\displaystyle \hat{T}$};
% Text Node
\draw (475.23,112.15) node [anchor=north west][inner sep=0.75pt]  [font=\normalsize,xscale=0.8,yscale=0.8]  {$t_{\hat{T}}$};
% Text Node
\draw (388.51,158.67) node [anchor=north west][inner sep=0.75pt]  [font=\tiny,xscale=0.8,yscale=0.8]  {$A$};
% Text Node
\draw (542.97,157.75) node [anchor=north west][inner sep=0.75pt]  [font=\tiny,xscale=0.8,yscale=0.8]  {$B$};
% Text Node
\draw (393.25,2.23) node [anchor=north west][inner sep=0.75pt]  [font=\tiny,xscale=0.8,yscale=0.8]  {$C$};
% Text Node
\draw (396.53,143.92) node [anchor=north west][inner sep=0.75pt]  [font=\tiny,xscale=0.8,yscale=0.8]  {$D$};
% Text Node
\draw (513.31,142.32) node [anchor=north west][inner sep=0.75pt]  [font=\tiny,xscale=0.8,yscale=0.8]  {$E$};
% Text Node
\draw (396.16,26.45) node [anchor=north west][inner sep=0.75pt]  [font=\tiny,xscale=0.8,yscale=0.8]  {$F$};
% Text Node
\draw (259.17,249.37) node [anchor=north west][inner sep=0.75pt]  [font=\normalsize,xscale=0.8,yscale=0.8] [align=left] {$\displaystyle \hat{T}$};
% Text Node
\draw (282.28,328.52) node [anchor=north west][inner sep=0.75pt]  [font=\normalsize,xscale=0.8,yscale=0.8]  {$t_{\hat{T}}$};
% Text Node
\draw (194.9,373.71) node [anchor=north west][inner sep=0.75pt]  [font=\tiny,xscale=0.8,yscale=0.8]  {$A$};
% Text Node
\draw (349.35,372.79) node [anchor=north west][inner sep=0.75pt]  [font=\tiny,xscale=0.8,yscale=0.8]  {$B$};
% Text Node
\draw (199.64,217.27) node [anchor=north west][inner sep=0.75pt]  [font=\tiny,xscale=0.8,yscale=0.8]  {$C$};
% Text Node
\draw (202.91,358.96) node [anchor=north west][inner sep=0.75pt]  [font=\tiny,xscale=0.8,yscale=0.8]  {$D$};
% Text Node
\draw (312.44,358.96) node [anchor=north west][inner sep=0.75pt]  [font=\tiny,xscale=0.8,yscale=0.8]  {$E$};
% Text Node
\draw (202.55,241.49) node [anchor=north west][inner sep=0.75pt]  [font=\tiny,xscale=0.8,yscale=0.8]  {$F$};
% Text Node
\draw (65.47,249.37) node [anchor=north west][inner sep=0.75pt]  [font=\normalsize,xscale=0.8,yscale=0.8] [align=left] {$\displaystyle \hat{T}$};
% Text Node
\draw (86.58,327.19) node [anchor=north west][inner sep=0.75pt]  [font=\normalsize,xscale=0.8,yscale=0.8]  {$t_{\hat{T}}$};
% Text Node
\draw (1.2,373.71) node [anchor=north west][inner sep=0.75pt]  [font=\tiny,xscale=0.8,yscale=0.8]  {$A$};
% Text Node
\draw (155.65,372.79) node [anchor=north west][inner sep=0.75pt]  [font=\tiny,xscale=0.8,yscale=0.8]  {$B$};
% Text Node
\draw (5.93,217.27) node [anchor=north west][inner sep=0.75pt]  [font=\tiny,xscale=0.8,yscale=0.8]  {$C$};
% Text Node
\draw (9.21,358.96) node [anchor=north west][inner sep=0.75pt]  [font=\tiny,xscale=0.8,yscale=0.8]  {$D$};
% Text Node
\draw (119.99,358.96) node [anchor=north west][inner sep=0.75pt]  [font=\tiny,xscale=0.8,yscale=0.8]  {$E$};
% Text Node
\draw (8.85,241.49) node [anchor=north west][inner sep=0.75pt]  [font=\tiny,xscale=0.8,yscale=0.8]  {$F$};
% Text Node
\draw (452.78,249.37) node [anchor=north west][inner sep=0.75pt]  [font=\normalsize,xscale=0.8,yscale=0.8] [align=left] {$\displaystyle \hat{T}$};
% Text Node
\draw (474.56,327.86) node [anchor=north west][inner sep=0.75pt]  [font=\normalsize,xscale=0.8,yscale=0.8]  {$t_{\hat{T}}$};
% Text Node
\draw (388.51,373.71) node [anchor=north west][inner sep=0.75pt]  [font=\tiny,xscale=0.8,yscale=0.8]  {$A$};
% Text Node
\draw (543.4,373.28) node [anchor=north west][inner sep=0.75pt]  [font=\tiny,xscale=0.8,yscale=0.8]  {$B$};
% Text Node
\draw (393.25,217.27) node [anchor=north west][inner sep=0.75pt]  [font=\tiny,xscale=0.8,yscale=0.8]  {$C$};
% Text Node
\draw (395.1,353.67) node [anchor=north west][inner sep=0.75pt]  [font=\tiny,xscale=0.8,yscale=0.8]  {$D$};
% Text Node
\draw (510.91,358.96) node [anchor=north west][inner sep=0.75pt]  [font=\tiny,xscale=0.8,yscale=0.8]  {$E$};
% Text Node
\draw (396.16,241.49) node [anchor=north west][inner sep=0.75pt]  [font=\tiny,xscale=0.8,yscale=0.8]  {$F$};
% Text Node
\draw (65.47,464.41) node [anchor=north west][inner sep=0.75pt]  [font=\normalsize,xscale=0.8,yscale=0.8] [align=left] {$\displaystyle \hat{T}$};
% Text Node
\draw (86.58,542.23) node [anchor=north west][inner sep=0.75pt]  [font=\normalsize,xscale=0.8,yscale=0.8]  {$t_{\hat{T}}$};
% Text Node
\draw (1.2,588.76) node [anchor=north west][inner sep=0.75pt]  [font=\tiny,xscale=0.8,yscale=0.8]  {$A$};
% Text Node
\draw (155.65,587.83) node [anchor=north west][inner sep=0.75pt]  [font=\tiny,xscale=0.8,yscale=0.8]  {$B$};
% Text Node
\draw (5.93,432.31) node [anchor=north west][inner sep=0.75pt]  [font=\tiny,xscale=0.8,yscale=0.8]  {$C$};
% Text Node
\draw (9.21,574) node [anchor=north west][inner sep=0.75pt]  [font=\tiny,xscale=0.8,yscale=0.8]  {$D$};
% Text Node
\draw (123.59,574) node [anchor=north west][inner sep=0.75pt]  [font=\tiny,xscale=0.8,yscale=0.8]  {$E$};
% Text Node
\draw (8.85,456.54) node [anchor=north west][inner sep=0.75pt]  [font=\tiny,xscale=0.8,yscale=0.8]  {$F$};
% Text Node
\draw (259.17,464.41) node [anchor=north west][inner sep=0.75pt]  [font=\normalsize,xscale=0.8,yscale=0.8] [align=left] {$\displaystyle \hat{T}$};
% Text Node
\draw (280.95,542.23) node [anchor=north west][inner sep=0.75pt]  [font=\normalsize,xscale=0.8,yscale=0.8]  {$t_{\hat{T}}$};
% Text Node
\draw (194.9,588.76) node [anchor=north west][inner sep=0.75pt]  [font=\tiny,xscale=0.8,yscale=0.8]  {$A$};
% Text Node
\draw (349.35,587.83) node [anchor=north west][inner sep=0.75pt]  [font=\tiny,xscale=0.8,yscale=0.8]  {$B$};
% Text Node
\draw (199.64,432.31) node [anchor=north west][inner sep=0.75pt]  [font=\tiny,xscale=0.8,yscale=0.8]  {$C$};
% Text Node
\draw (202.91,574) node [anchor=north west][inner sep=0.75pt]  [font=\tiny,xscale=0.8,yscale=0.8]  {$D$};
% Text Node
\draw (317.3,574) node [anchor=north west][inner sep=0.75pt]  [font=\tiny,xscale=0.8,yscale=0.8]  {$E$};
% Text Node
\draw (202.55,456.54) node [anchor=north west][inner sep=0.75pt]  [font=\tiny,xscale=0.8,yscale=0.8]  {$F$};
% Text Node
\draw (452.78,464.41) node [anchor=north west][inner sep=0.75pt]  [font=\normalsize,xscale=0.8,yscale=0.8] [align=left] {$\displaystyle \hat{T}$};
% Text Node
\draw (475.9,542.9) node [anchor=north west][inner sep=0.75pt]  [font=\normalsize,xscale=0.8,yscale=0.8]  {$t_{\hat{T}}$};
% Text Node
\draw (388.51,588.76) node [anchor=north west][inner sep=0.75pt]  [font=\tiny,xscale=0.8,yscale=0.8]  {$A$};
% Text Node
\draw (542.97,587.83) node [anchor=north west][inner sep=0.75pt]  [font=\tiny,xscale=0.8,yscale=0.8]  {$B$};
% Text Node
\draw (393.25,432.31) node [anchor=north west][inner sep=0.75pt]  [font=\tiny,xscale=0.8,yscale=0.8]  {$C$};
% Text Node
\draw (396.53,574) node [anchor=north west][inner sep=0.75pt]  [font=\tiny,xscale=0.8,yscale=0.8]  {$D$};
% Text Node
\draw (510.91,574) node [anchor=north west][inner sep=0.75pt]  [font=\tiny,xscale=0.8,yscale=0.8]  {$E$};
% Text Node
\draw (396.16,456.54) node [anchor=north west][inner sep=0.75pt]  [font=\tiny,xscale=0.8,yscale=0.8]  {$F$};
% Text Node
\draw (18.86,96.65) node [anchor=north west][inner sep=0.75pt]  [font=\small,color={rgb, 255:red, 208; green, 2; blue, 27 }  ,opacity=1 ,xscale=0.8,yscale=0.8]  {$\hat{S}$};
% Text Node
\draw (212.96,115.05) node [anchor=north west][inner sep=0.75pt]  [font=\small,color={rgb, 255:red, 208; green, 2; blue, 27 }  ,opacity=1 ,xscale=0.8,yscale=0.8]  {$\hat{S}$};
% Text Node
\draw (417.24,116.65) node [anchor=north west][inner sep=0.75pt]  [font=\small,color={rgb, 255:red, 208; green, 2; blue, 27 }  ,opacity=1 ,xscale=0.8,yscale=0.8]  {$\hat{S}$};
% Text Node
\draw (62,163.4) node [anchor=north west][inner sep=0.75pt]  [font=\Large,xscale=0.8,yscale=0.8]  {$1)$};
% Text Node
\draw (257,163.4) node [anchor=north west][inner sep=0.75pt]  [font=\Large,xscale=0.8,yscale=0.8]  {$2)$};
% Text Node
\draw (452,163.4) node [anchor=north west][inner sep=0.75pt]  [font=\Large,xscale=0.8,yscale=0.8]  {$3)$};
% Text Node
\draw (62,383.4) node [anchor=north west][inner sep=0.75pt]  [font=\Large,xscale=0.8,yscale=0.8]  {$4)$};
% Text Node
\draw (257,383.4) node [anchor=north west][inner sep=0.75pt]  [font=\Large,xscale=0.8,yscale=0.8]  {$5)$};
% Text Node
\draw (452,383.4) node [anchor=north west][inner sep=0.75pt]  [font=\Large,xscale=0.8,yscale=0.8]  {$6)$};
% Text Node
\draw (62,598.4) node [anchor=north west][inner sep=0.75pt]  [font=\Large,xscale=0.8,yscale=0.8]  {$7)$};
% Text Node
\draw (257,598.4) node [anchor=north west][inner sep=0.75pt]  [font=\Large,xscale=0.8,yscale=0.8]  {$8)$};
% Text Node
\draw (452,598.4) node [anchor=north west][inner sep=0.75pt]  [font=\Large,xscale=0.8,yscale=0.8]  {$9)$};
% Text Node
\draw (112.22,310.1) node [anchor=north west][inner sep=0.75pt]  [font=\small,color={rgb, 255:red, 208; green, 2; blue, 27 }  ,opacity=1 ,xscale=0.8,yscale=0.8]  {$\hat{S}$};
% Text Node
\draw (288.57,291.65) node [anchor=north west][inner sep=0.75pt]  [font=\small,color={rgb, 255:red, 208; green, 2; blue, 27 }  ,opacity=1 ,xscale=0.8,yscale=0.8]  {$\hat{S}$};
% Text Node
\draw (414,332.65) node [anchor=north west][inner sep=0.75pt]  [font=\small,color={rgb, 255:red, 208; green, 2; blue, 27 }  ,opacity=1 ,xscale=0.8,yscale=0.8]  {$\hat{S}$};
% Text Node
\draw (141.4,550.85) node [anchor=north west][inner sep=0.75pt]  [font=\small,color={rgb, 255:red, 208; green, 2; blue, 27 }  ,opacity=1 ,xscale=0.8,yscale=0.8]  {$\hat{S}$};
% Text Node
\draw (224.4,525.05) node [anchor=north west][inner sep=0.75pt]  [font=\small,color={rgb, 255:red, 208; green, 2; blue, 27 }  ,opacity=1 ,xscale=0.8,yscale=0.8]  {$\hat{S}$};
% Text Node
\draw (409.6,541.65) node [anchor=north west][inner sep=0.75pt]  [font=\small,color={rgb, 255:red, 208; green, 2; blue, 27 }  ,opacity=1 ,xscale=0.8,yscale=0.8]  {$\hat{S}$};

\end{tikzpicture}
    \caption{List of all possible cases that occurs when exists $T \in \mathcal{T}^I_h$ such that $S \cap \partial t_T \ne \emptyset$, represented onto the reference element. Cases 1-3 occurs when $\hat{S}$ is a segment and intersects $\partial t_{\hat{T}}$ in $D$; cases 4-7 occurs when $\hat{S}$ is a  segment and intersects $\partial t_{\hat{T}}$ in $E$ (or in $F$, due to symmetries); case 8 occurs when $\hat{S}$ is a segment or a sequence of two segments and intersects $\partial t_{\hat{T}}$ in two points; case 9 occurs when $\hat{S}$ is a sequence of two segments and $\hat{S} \cap \mathring{t}_{\hat{T}} = \emptyset$.}
\label{fig:TabellaGraficaTuttiCasi}
\end{figure}

\begin{figure}
    \centering
    \tikzset{every picture/.style={line width=0.75pt}} %set default line width to 0.75pt        

\begin{tikzpicture}[x=0.75pt,y=0.75pt,yscale=-1,xscale=1]
%uncomment if require: \path (0,691); %set diagram left start at 0, and has height of 691

%Shape: Right Triangle [id:dp32390855616843695] 
\draw   (0,0.6) -- (157.62,155.61) -- (0,155.61) -- cycle ;
%Shape: Right Triangle [id:dp23673735575915567] 
\draw   (15.76,32.22) -- (126.09,140.73) -- (15.76,140.73) -- cycle ;

%Shape: Right Triangle [id:dp39877871411860233] 
\draw   (387.32,1.17) -- (544.93,156.18) -- (387.32,156.18) -- cycle ;
%Shape: Right Triangle [id:dp40411568634527684] 
\draw   (403.08,32.79) -- (513.41,141.3) -- (403.08,141.3) -- cycle ;

%Shape: Right Triangle [id:dp4026283857020786] 
\draw   (193.7,216.21) -- (351.32,371.22) -- (193.7,371.22) -- cycle ;
%Shape: Right Triangle [id:dp45184822676763525] 
\draw   (209.47,247.83) -- (319.8,356.34) -- (209.47,356.34) -- cycle ;

%Shape: Right Triangle [id:dp4902409234849392] 
\draw   (0,216.21) -- (157.62,371.22) -- (0,371.22) -- cycle ;
%Shape: Right Triangle [id:dp032285521352859115] 
\draw   (15.76,247.83) -- (126.09,356.34) -- (15.76,356.34) -- cycle ;

%Shape: Right Triangle [id:dp9994251059628556] 
\draw   (387.32,216.21) -- (544.93,371.22) -- (387.32,371.22) -- cycle ;
%Shape: Right Triangle [id:dp7937900661540899] 
\draw   (403.08,247.83) -- (513.41,356.34) -- (403.08,356.34) -- cycle ;

%Shape: Right Triangle [id:dp4887674933004805] 
\draw   (0,431.25) -- (157.62,586.26) -- (0,586.26) -- cycle ;
%Shape: Right Triangle [id:dp873227947957635] 
\draw   (15.76,462.87) -- (126.09,571.38) -- (15.76,571.38) -- cycle ;

%Shape: Right Triangle [id:dp28215051003603286] 
\draw   (193.7,431.25) -- (351.32,586.26) -- (193.7,586.26) -- cycle ;
%Shape: Right Triangle [id:dp2294978926146417] 
\draw   (209.47,462.87) -- (319.8,571.38) -- (209.47,571.38) -- cycle ;

%Shape: Right Triangle [id:dp8239346257636272] 
\draw   (387.54,431.47) -- (545.16,586.49) -- (387.54,586.49) -- cycle ;
%Shape: Right Triangle [id:dp6260362925730872] 
\draw   (403.3,463.1) -- (513.63,571.6) -- (403.3,571.6) -- cycle ;

%Shape: Right Triangle [id:dp1404910951206374] 
\draw   (193.7,0.6) -- (351.32,155.61) -- (193.7,155.61) -- cycle ;
%Shape: Right Triangle [id:dp5601383658554802] 
\draw   (209.47,32.22) -- (319.8,140.73) -- (209.47,140.73) -- cycle ;

%Straight Lines [id:da03922148355967214] 
\draw [color={rgb, 255:red, 208; green, 2; blue, 27 }  ,draw opacity=1 ]   (5.6,6.54) -- (15.76,140.73) ;
%Straight Lines [id:da7456521073111008] 
\draw [color={rgb, 255:red, 208; green, 2; blue, 27 }  ,draw opacity=1 ]   (193.19,146.52) -- (209.47,140.73) ;
%Straight Lines [id:da1874984032890149] 
\draw [color={rgb, 255:red, 208; green, 2; blue, 27 }  ,draw opacity=1 ]   (403.08,141.3) -- (393.41,156.97) ;
%Straight Lines [id:da44548283661340804] 
\draw [color={rgb, 255:red, 208; green, 2; blue, 27 }  ,draw opacity=1 ]   (126.09,356.34) -- (85.44,301.1) ;
%Straight Lines [id:da24233357794117305] 
\draw [color={rgb, 255:red, 126; green, 211; blue, 33 }  ,draw opacity=1 ]   (192.74,231.49) -- (319.8,356.34) ;
%Straight Lines [id:da4259771262555553] 
\draw [color={rgb, 255:red, 126; green, 211; blue, 33 }  ,draw opacity=1 ]   (386.73,356.37) -- (513.41,356.34) ;
%Straight Lines [id:da9583098470878206] 
\draw [color={rgb, 255:red, 126; green, 211; blue, 33 }  ,draw opacity=1 ]   (0,586.26) -- (126.09,571.38) ;
%Straight Lines [id:da6478578138727712] 
\draw [color={rgb, 255:red, 126; green, 211; blue, 33 }  ,draw opacity=1 ]   (319.94,556.01) -- (319.8,571.38) ;
%Straight Lines [id:da02315537137545176] 
\draw [color={rgb, 255:red, 126; green, 211; blue, 33 }  ,draw opacity=1 ]   (513.52,585.82) -- (513.41,571.38) ;
%Straight Lines [id:da20113425465855905] 
\draw [color={rgb, 255:red, 126; green, 211; blue, 33 }  ,draw opacity=1 ]   (126.09,356.34) -- (126.15,341.26) ;
%Straight Lines [id:da1836933177681097] 
\draw [color={rgb, 255:red, 126; green, 211; blue, 33 }  ,draw opacity=1 ]   (126.24,356.52) -- (0,216.21) ;
%Straight Lines [id:da5739878453799586] 
\draw [color={rgb, 255:red, 208; green, 2; blue, 27 }  ,draw opacity=1 ]   (193.26,226.6) -- (319.8,356.34) ;
%Straight Lines [id:da5148334269015501] 
\draw [color={rgb, 255:red, 208; green, 2; blue, 27 }  ,draw opacity=1 ]   (387.27,364.37) -- (513.41,356.34) ;
%Straight Lines [id:da8003065882445815] 
\draw [color={rgb, 255:red, 208; green, 2; blue, 27 }  ,draw opacity=1 ]   (53.27,586.37) -- (126.09,571.38) ;
%Straight Lines [id:da697326216737183] 
\draw [color={rgb, 255:red, 126; green, 211; blue, 33 }  ,draw opacity=1 ]   (125.93,586.15) -- (126.09,571.38) ;
%Straight Lines [id:da9972624058653946] 
\draw [color={rgb, 255:red, 208; green, 2; blue, 27 }  ,draw opacity=1 ]   (334.48,569.64) -- (319.8,571.38) ;
%Straight Lines [id:da6705118837764649] 
\draw [color={rgb, 255:red, 126; green, 211; blue, 33 }  ,draw opacity=1 ]   (351.32,586.26) -- (319.8,571.38) ;
%Straight Lines [id:da8832110904754708] 
\draw [color={rgb, 255:red, 126; green, 211; blue, 33 }  ,draw opacity=1 ]   (544.93,586.26) -- (513.41,571.38) ;
%Straight Lines [id:da1260278219848906] 
\draw [color={rgb, 255:red, 208; green, 2; blue, 27 }  ,draw opacity=1 ]   (526.24,586.19) -- (513.41,571.38) ;
%Straight Lines [id:da1106756866519728] 
\draw [color={rgb, 255:red, 126; green, 211; blue, 33 }  ,draw opacity=1 ]   (193.7,216.21) -- (319.8,356.34) ;
%Straight Lines [id:da9209511639849489] 
\draw [color={rgb, 255:red, 126; green, 211; blue, 33 }  ,draw opacity=1 ]   (387.32,371.22) -- (513.41,356.34) ;
%Straight Lines [id:da7503958992925296] 
\draw [color={rgb, 255:red, 126; green, 211; blue, 33 }  ,draw opacity=1 ]   (15.76,140.73) -- (-0.14,0.42) ;
%Straight Lines [id:da32253828862234635] 
\draw [color={rgb, 255:red, 126; green, 211; blue, 33 }  ,draw opacity=1 ]   (15.7,17.23) -- (15.76,140.73) ;
%Straight Lines [id:da5471805472061403] 
\draw [color={rgb, 255:red, 126; green, 211; blue, 33 }  ,draw opacity=1 ]   (209.47,140.73) -- (193.7,0.6) ;
%Straight Lines [id:da05592856630306564] 
\draw [color={rgb, 255:red, 126; green, 211; blue, 33 }  ,draw opacity=1 ]   (209.47,140.73) -- (193.7,155.61) ;
%Straight Lines [id:da25951226419191564] 
\draw [color={rgb, 255:red, 126; green, 211; blue, 33 }  ,draw opacity=1 ]   (387.78,157.23) -- (403.08,141.3) ;
%Straight Lines [id:da7482416628405111] 
\draw [color={rgb, 255:red, 126; green, 211; blue, 33 }  ,draw opacity=1 ]   (403.11,156.34) -- (403.08,141.3) ;

% Text Node
\draw (65.47,33.76) node [anchor=north west][inner sep=0.75pt]  [font=\normalsize,xscale=0.8,yscale=0.8] [align=left] {$\displaystyle \hat{T}$};
% Text Node
\draw (87.24,112.25) node [anchor=north west][inner sep=0.75pt]  [font=\normalsize,xscale=0.8,yscale=0.8]  {$\hat{t}_{\hat{T}}$};
% Text Node
\draw (1.2,158.1) node [anchor=north west][inner sep=0.75pt]  [font=\tiny,xscale=0.8,yscale=0.8]  {$A$};
% Text Node
\draw (155.65,157.18) node [anchor=north west][inner sep=0.75pt]  [font=\tiny,xscale=0.8,yscale=0.8]  {$B$};
% Text Node
\draw (5.93,1.66) node [anchor=north west][inner sep=0.75pt]  [font=\tiny,xscale=0.8,yscale=0.8]  {$C$};
% Text Node
\draw (7.5,143.06) node [anchor=north west][inner sep=0.75pt]  [font=\tiny,xscale=0.8,yscale=0.8]  {$D$};
% Text Node
\draw (123.59,143.35) node [anchor=north west][inner sep=0.75pt]  [font=\tiny,xscale=0.8,yscale=0.8]  {$E$};
% Text Node
\draw (8.85,25.88) node [anchor=north west][inner sep=0.75pt]  [font=\tiny,xscale=0.8,yscale=0.8]  {$F$};
% Text Node
\draw (259.17,29.17) node [anchor=north west][inner sep=0.75pt]  [font=\normalsize,xscale=0.8,yscale=0.8] [align=left] {$\displaystyle \hat{T}$};
% Text Node
\draw (280.1,111.81) node [anchor=north west][inner sep=0.75pt]  [font=\normalsize,xscale=0.8,yscale=0.8]  {$\hat{t}_{\hat{T}}$};
% Text Node
\draw (194.9,158.1) node [anchor=north west][inner sep=0.75pt]  [font=\tiny,xscale=0.8,yscale=0.8]  {$A$};
% Text Node
\draw (349.35,157.75) node [anchor=north west][inner sep=0.75pt]  [font=\tiny,xscale=0.8,yscale=0.8]  {$B$};
% Text Node
\draw (199.64,2.23) node [anchor=north west][inner sep=0.75pt]  [font=\tiny,xscale=0.8,yscale=0.8]  {$C$};
% Text Node
\draw (207.36,144.7) node [anchor=north west][inner sep=0.75pt]  [font=\tiny,xscale=0.8,yscale=0.8]  {$D$};
% Text Node
\draw (318.26,143.91) node [anchor=north west][inner sep=0.75pt]  [font=\tiny,xscale=0.8,yscale=0.8]  {$E$};
% Text Node
\draw (202.55,26.45) node [anchor=north west][inner sep=0.75pt]  [font=\tiny,xscale=0.8,yscale=0.8]  {$F$};
% Text Node
\draw (452.78,34.33) node [anchor=north west][inner sep=0.75pt]  [font=\normalsize,xscale=0.8,yscale=0.8] [align=left] {$\displaystyle \hat{T}$};
% Text Node
\draw (475.23,112.15) node [anchor=north west][inner sep=0.75pt]  [font=\normalsize,xscale=0.8,yscale=0.8]  {$\hat{t}_{\hat{T}}$};
% Text Node
\draw (388.51,158.67) node [anchor=north west][inner sep=0.75pt]  [font=\tiny,xscale=0.8,yscale=0.8]  {$A$};
% Text Node
\draw (542.97,157.75) node [anchor=north west][inner sep=0.75pt]  [font=\tiny,xscale=0.8,yscale=0.8]  {$B$};
% Text Node
\draw (393.25,2.23) node [anchor=north west][inner sep=0.75pt]  [font=\tiny,xscale=0.8,yscale=0.8]  {$C$};
% Text Node
\draw (405.08,144.7) node [anchor=north west][inner sep=0.75pt]  [font=\tiny,xscale=0.8,yscale=0.8]  {$D$};
% Text Node
\draw (513.31,142.32) node [anchor=north west][inner sep=0.75pt]  [font=\tiny,xscale=0.8,yscale=0.8]  {$E$};
% Text Node
\draw (396.16,26.45) node [anchor=north west][inner sep=0.75pt]  [font=\tiny,xscale=0.8,yscale=0.8]  {$F$};
% Text Node
\draw (259.17,249.37) node [anchor=north west][inner sep=0.75pt]  [font=\normalsize,xscale=0.8,yscale=0.8] [align=left] {$\displaystyle \hat{T}$};
% Text Node
\draw (282.28,328.52) node [anchor=north west][inner sep=0.75pt]  [font=\normalsize,xscale=0.8,yscale=0.8]  {$\hat{t}_{\hat{T}}$};
% Text Node
\draw (194.9,373.71) node [anchor=north west][inner sep=0.75pt]  [font=\tiny,xscale=0.8,yscale=0.8]  {$A$};
% Text Node
\draw (349.35,372.79) node [anchor=north west][inner sep=0.75pt]  [font=\tiny,xscale=0.8,yscale=0.8]  {$B$};
% Text Node
\draw (199.64,217.27) node [anchor=north west][inner sep=0.75pt]  [font=\tiny,xscale=0.8,yscale=0.8]  {$C$};
% Text Node
\draw (202.91,358.96) node [anchor=north west][inner sep=0.75pt]  [font=\tiny,xscale=0.8,yscale=0.8]  {$D$};
% Text Node
\draw (312.44,358.96) node [anchor=north west][inner sep=0.75pt]  [font=\tiny,xscale=0.8,yscale=0.8]  {$E$};
% Text Node
\draw (202.55,241.49) node [anchor=north west][inner sep=0.75pt]  [font=\tiny,xscale=0.8,yscale=0.8]  {$F$};
% Text Node
\draw (65.47,249.37) node [anchor=north west][inner sep=0.75pt]  [font=\normalsize,xscale=0.8,yscale=0.8] [align=left] {$\displaystyle \hat{T}$};
% Text Node
\draw (86.58,327.19) node [anchor=north west][inner sep=0.75pt]  [font=\normalsize,xscale=0.8,yscale=0.8]  {$\hat{t}_{\hat{T}}$};
% Text Node
\draw (1.2,373.71) node [anchor=north west][inner sep=0.75pt]  [font=\tiny,xscale=0.8,yscale=0.8]  {$A$};
% Text Node
\draw (155.65,372.79) node [anchor=north west][inner sep=0.75pt]  [font=\tiny,xscale=0.8,yscale=0.8]  {$B$};
% Text Node
\draw (5.93,217.27) node [anchor=north west][inner sep=0.75pt]  [font=\tiny,xscale=0.8,yscale=0.8]  {$C$};
% Text Node
\draw (9.21,358.96) node [anchor=north west][inner sep=0.75pt]  [font=\tiny,xscale=0.8,yscale=0.8]  {$D$};
% Text Node
\draw (119.99,358.96) node [anchor=north west][inner sep=0.75pt]  [font=\tiny,xscale=0.8,yscale=0.8]  {$E$};
% Text Node
\draw (8.85,241.49) node [anchor=north west][inner sep=0.75pt]  [font=\tiny,xscale=0.8,yscale=0.8]  {$F$};
% Text Node
\draw (452.78,249.37) node [anchor=north west][inner sep=0.75pt]  [font=\normalsize,xscale=0.8,yscale=0.8] [align=left] {$\displaystyle \hat{T}$};
% Text Node
\draw (474.56,327.86) node [anchor=north west][inner sep=0.75pt]  [font=\normalsize,xscale=0.8,yscale=0.8]  {$\hat{t}_{\hat{T}}$};
% Text Node
\draw (388.51,373.71) node [anchor=north west][inner sep=0.75pt]  [font=\tiny,xscale=0.8,yscale=0.8]  {$A$};
% Text Node
\draw (543.4,373.28) node [anchor=north west][inner sep=0.75pt]  [font=\tiny,xscale=0.8,yscale=0.8]  {$B$};
% Text Node
\draw (393.25,217.27) node [anchor=north west][inner sep=0.75pt]  [font=\tiny,xscale=0.8,yscale=0.8]  {$C$};
% Text Node
\draw (394.88,349.67) node [anchor=north west][inner sep=0.75pt]  [font=\tiny,xscale=0.8,yscale=0.8]  {$D$};
% Text Node
\draw (508.55,360.78) node [anchor=north west][inner sep=0.75pt]  [font=\tiny,xscale=0.8,yscale=0.8]  {$E$};
% Text Node
\draw (396.16,241.49) node [anchor=north west][inner sep=0.75pt]  [font=\tiny,xscale=0.8,yscale=0.8]  {$F$};
% Text Node
\draw (65.47,464.41) node [anchor=north west][inner sep=0.75pt]  [font=\normalsize,xscale=0.8,yscale=0.8] [align=left] {$\displaystyle \hat{T}$};
% Text Node
\draw (86.58,542.23) node [anchor=north west][inner sep=0.75pt]  [font=\normalsize,xscale=0.8,yscale=0.8]  {$\hat{t}_{\hat{T}}$};
% Text Node
\draw (1.2,588.76) node [anchor=north west][inner sep=0.75pt]  [font=\tiny,xscale=0.8,yscale=0.8]  {$A$};
% Text Node
\draw (155.65,587.83) node [anchor=north west][inner sep=0.75pt]  [font=\tiny,xscale=0.8,yscale=0.8]  {$B$};
% Text Node
\draw (5.93,432.31) node [anchor=north west][inner sep=0.75pt]  [font=\tiny,xscale=0.8,yscale=0.8]  {$C$};
% Text Node
\draw (9.21,574) node [anchor=north west][inner sep=0.75pt]  [font=\tiny,xscale=0.8,yscale=0.8]  {$D$};
% Text Node
\draw (123.59,574) node [anchor=north west][inner sep=0.75pt]  [font=\tiny,xscale=0.8,yscale=0.8]  {$E$};
% Text Node
\draw (8.85,456.54) node [anchor=north west][inner sep=0.75pt]  [font=\tiny,xscale=0.8,yscale=0.8]  {$F$};
% Text Node
\draw (259.17,464.41) node [anchor=north west][inner sep=0.75pt]  [font=\normalsize,xscale=0.8,yscale=0.8] [align=left] {$\displaystyle \hat{T}$};
% Text Node
\draw (280.95,542.23) node [anchor=north west][inner sep=0.75pt]  [font=\normalsize,xscale=0.8,yscale=0.8]  {$\hat{t}_{\hat{T}}$};
% Text Node
\draw (194.9,588.76) node [anchor=north west][inner sep=0.75pt]  [font=\tiny,xscale=0.8,yscale=0.8]  {$A$};
% Text Node
\draw (349.35,587.83) node [anchor=north west][inner sep=0.75pt]  [font=\tiny,xscale=0.8,yscale=0.8]  {$B$};
% Text Node
\draw (199.64,432.31) node [anchor=north west][inner sep=0.75pt]  [font=\tiny,xscale=0.8,yscale=0.8]  {$C$};
% Text Node
\draw (202.91,574) node [anchor=north west][inner sep=0.75pt]  [font=\tiny,xscale=0.8,yscale=0.8]  {$D$};
% Text Node
\draw (317.3,574) node [anchor=north west][inner sep=0.75pt]  [font=\tiny,xscale=0.8,yscale=0.8]  {$E$};
% Text Node
\draw (202.55,456.54) node [anchor=north west][inner sep=0.75pt]  [font=\tiny,xscale=0.8,yscale=0.8]  {$F$};
% Text Node
\draw (452.78,464.41) node [anchor=north west][inner sep=0.75pt]  [font=\normalsize,xscale=0.8,yscale=0.8] [align=left] {$\displaystyle \hat{T}$};
% Text Node
\draw (475.9,542.9) node [anchor=north west][inner sep=0.75pt]  [font=\normalsize,xscale=0.8,yscale=0.8]  {$\hat{t}_{\hat{T}}$};
% Text Node
\draw (388.51,588.76) node [anchor=north west][inner sep=0.75pt]  [font=\tiny,xscale=0.8,yscale=0.8]  {$A$};
% Text Node
\draw (542.97,587.83) node [anchor=north west][inner sep=0.75pt]  [font=\tiny,xscale=0.8,yscale=0.8]  {$B$};
% Text Node
\draw (393.25,432.31) node [anchor=north west][inner sep=0.75pt]  [font=\tiny,xscale=0.8,yscale=0.8]  {$C$};
% Text Node
\draw (396.53,574) node [anchor=north west][inner sep=0.75pt]  [font=\tiny,xscale=0.8,yscale=0.8]  {$D$};
% Text Node
\draw (504.55,574.91) node [anchor=north west][inner sep=0.75pt]  [font=\tiny,xscale=0.8,yscale=0.8]  {$E$};
% Text Node
\draw (396.16,456.54) node [anchor=north west][inner sep=0.75pt]  [font=\tiny,xscale=0.8,yscale=0.8]  {$F$};
% Text Node
\draw (18.86,96.65) node [anchor=north west][inner sep=0.75pt]  [font=\small,color={rgb, 255:red, 208; green, 2; blue, 27 }  ,opacity=1 ,xscale=0.8,yscale=0.8]  {$\widehat{\hat{S}}$};
% Text Node
\draw (62,163.4) node [anchor=north west][inner sep=0.75pt]  [font=\Large,xscale=0.8,yscale=0.8]  {$9.1)$};
% Text Node
\draw (257,163.4) node [anchor=north west][inner sep=0.75pt]  [font=\Large,xscale=0.8,yscale=0.8]  {$9.2)$};
% Text Node
\draw (452,163.4) node [anchor=north west][inner sep=0.75pt]  [font=\Large,xscale=0.8,yscale=0.8]  {$9.3)$};
% Text Node
\draw (62,383.4) node [anchor=north west][inner sep=0.75pt]  [font=\Large,xscale=0.8,yscale=0.8]  {$9.4)$};
% Text Node
\draw (257,383.4) node [anchor=north west][inner sep=0.75pt]  [font=\Large,xscale=0.8,yscale=0.8]  {$9.5)$};
% Text Node
\draw (452,383.4) node [anchor=north west][inner sep=0.75pt]  [font=\Large,xscale=0.8,yscale=0.8]  {$9.6)$};
% Text Node
\draw (62,598.4) node [anchor=north west][inner sep=0.75pt]  [font=\Large,xscale=0.8,yscale=0.8]  {$9.7)$};
% Text Node
\draw (257,598.4) node [anchor=north west][inner sep=0.75pt]  [font=\Large,xscale=0.8,yscale=0.8]  {$9.8)$};
% Text Node
\draw (452,598.4) node [anchor=north west][inner sep=0.75pt]  [font=\Large,xscale=0.8,yscale=0.8]  {$9.9)$};
% Text Node
\draw (215.41,110.65) node [anchor=north west][inner sep=0.75pt]  [font=\small,color={rgb, 255:red, 208; green, 2; blue, 27 }  ,opacity=1 ,xscale=0.8,yscale=0.8]  {$\widehat{\hat{S}}$};
% Text Node
\draw (409.26,114.88) node [anchor=north west][inner sep=0.75pt]  [font=\small,color={rgb, 255:red, 208; green, 2; blue, 27 }  ,opacity=1 ,xscale=0.8,yscale=0.8]  {$\widehat{\hat{S}}$};
% Text Node
\draw (116.15,302.48) node [anchor=north west][inner sep=0.75pt]  [font=\small,color={rgb, 255:red, 208; green, 2; blue, 27 }  ,opacity=1 ,xscale=0.8,yscale=0.8]  {$\widehat{\hat{S}}$};
% Text Node
\draw (300.52,288.03) node [anchor=north west][inner sep=0.75pt]  [font=\small,color={rgb, 255:red, 208; green, 2; blue, 27 }  ,opacity=1 ,xscale=0.8,yscale=0.8]  {$\widehat{\hat{S}}$};
% Text Node
\draw (423.93,327.81) node [anchor=north west][inner sep=0.75pt]  [font=\small,color={rgb, 255:red, 208; green, 2; blue, 27 }  ,opacity=1 ,xscale=0.8,yscale=0.8]  {$\widehat{\hat{S}}$};
% Text Node
\draw (329.96,534.81) node [anchor=north west][inner sep=0.75pt]  [font=\small,color={rgb, 255:red, 208; green, 2; blue, 27 }  ,opacity=1 ,xscale=0.8,yscale=0.8]  {$\widehat{\hat{S}}$};
% Text Node
\draw (517.7,534.92) node [anchor=north west][inner sep=0.75pt]  [font=\small,color={rgb, 255:red, 208; green, 2; blue, 27 }  ,opacity=1 ,xscale=0.8,yscale=0.8]  {$\widehat{\hat{S}}$};
% Text Node
\draw (145.78,547.59) node [anchor=north west][inner sep=0.75pt]  [font=\small,color={rgb, 255:red, 208; green, 2; blue, 27 }  ,opacity=1 ,xscale=0.8,yscale=0.8]  {$\widehat{\hat{S}}$};

\end{tikzpicture}
    \caption{The figure shows all possible positions of the portion $\overline{S}$ of the element 
$\hat{S}$. The limiting cases, highlighted in green, allow for a classification of all possible configurations.}
    \label{fig:BrokenLineCasistiche}
\end{figure}

\begin{enumerate}[label=9.\arabic*]

\item \label{item: Case 9.1} We consider $\overline{S}$ to be a portion of $\hat{S}$, terminating at the point $D$, with slope $m\in (-\infty,\frac{a-1}{a}]$. Then
    \begin{equation*}
    \begin{aligned}
    I_{\overline{S}}=\int_{\frac{a(m-1)+1}{m+1}}^{a} 27t[m(t-a)+a][1-t-m(t-a)-a]\sqrt{1+m^2}dt =\\
    = -\frac{9 (1-2a )^2 \sqrt{1 + m^2} \left[ m + 2a (1+a+m^2+am(4+m)) \right]}{4 (1 + m)^3}.
    \end{aligned}
\end{equation*}
Numerically, we can see that this integral reach a minimum in $m$ as $m \to -\infty $ or $m=\frac{a-1}{a}$, depending on the value of $a$. So,
\begin{equation}
\label{eq:LowerBoundCase9.1}
        \inf_{m\in (-\infty,\frac{a-1}{a}]} I_{\overline{S}} = \min_{a \in [0,\frac{1}{3}]} \left[\frac{9}{2} a(1+a)(1-2a)^2,   -\frac{9 \sqrt{a^2 + (a - 1)^2} \cdot (2a - 1)a(3a+1)}{4 }\right].
\end{equation}

\item \label{item: Case 9.2}  We consider $\overline{S}$ to be a portion of $\hat{S}$, terminating at the point $D$, with slope $m \in [\frac{a-1}{a},1]$. 
\begin{equation*}
    \begin{aligned}
        I_{\overline{S}}=\int_0^{a} 27t[m(t-a)+a][1-t-m(t-a)-a]\sqrt{1+m^2}dt =\\= -\frac{9}{4}a^3[2(-3+m)+a(10-5m+m^2)]\sqrt{m^2+1}.
    \end{aligned}
\end{equation*}
Numerically, we can see that this integral reach a minimum inside the interval of $m$. So we can perform its $m-$th derivative:
\begin{equation*}
    \frac{\partial I_{\overline{S}}}{\partial m} = \frac{a^3(m-1)(-2+4m+a(5+m(-7+3m)))}{\sqrt{1+m^2}}.
\end{equation*}
So, it is easy to see that the lower bound for $I_{\hat{S}}$ is obtained when $m= \frac{7a-4+ \sqrt{-11a^2-32a+16}}{6a}$.

\item \label{item: Case 9.3} Here, $\overline{S}$ is a segment connecting point $D$ with side $\overline{AB}$, so that it belongs to the family of lines with equation $\hat{y}-a=m(\hat{x}-a)$, with $m \in [1, +\infty]$. For symmetric reason, it is easy to see that this case is equivalent to the previous one (case~\ref{item: Case 9.2}) with $m \in [0,1]$.

\item \label{item: Case 9.4} We consider $\overline{S}$ to be a portion of $\hat{S}$, terminating at the point $E$, with slope $m\in (-\infty; \frac{1-a}{2a-1}]$. 
\begin{equation*}
\begin{aligned}
I_{\overline{S}}=\int_{\frac{1-a-2am+m}{m+1}}^{1-2a} 27t[m(t-(1-2a))+a][1-t-m(t-(1-2a))-a]\sqrt{1+m^2}dt =\\= \frac{9 a^3 \sqrt{1 + m^2} \left( -2  (1 + m)(4m + 3) + a(16m^2+25m+10) \right)}{4 (1 + m)^3}.
\end{aligned}
\end{equation*}
Numerically, we can see that this integral decreases as $m \to -\infty $:  
\begin{equation}
\label{eq:LowerBoundIntegralCase9.4}
\inf_{m\in (-\infty; \frac{1-a}{2a-1}]} I_{\overline{S}} = 18 a^3(1-2a).
\end{equation}

\item \label{item: Case 9.5} We consider $\overline{S}$ to be a portion of $\hat{S}$, terminating at the point $E$, with slope $m \in [\frac{1-a}{2a-1},-1]$.
\begin{equation*}
\begin{aligned}
I_{\overline{S}}= \int_0^{1-2a} 27t[m(t-(1-2a))+a][1-t-m(t-(1-2a))-a]\sqrt{1+m^2}dt =\\=  -\frac{9 (1-2a)^2\sqrt{1 + m^2} \left[ (1-2a)^2m^2+4(a-1)am-2a(a+1)+m \right]}{4}.
\end{aligned}
\end{equation*}
Given the range of values of \(a\) that will be considered later, namely \(0<a<\tfrac{1}{4}\), numerically we can see that this integral has a lower bound for $m=\frac{1-a}{2a-1}$. The value of the same integral with $m=\frac{1-a}{2a-1}$ was already considered in the previous case, so we can take~\eqref{eq:LowerBoundIntegralCase9.4} as lower bound.

\item \label{item: Case 9.6} We consider $\overline{S}$ to be a portion of $\hat{S}$, terminating at the point $E$, with slope $m \in [ 0, \frac{a}{1-2a}]$.
\begin{equation*}
\begin{aligned}
I_{\overline{S}}= \int_0^{1-2a} 27t[m(t-(1-2a))+a][1-t-m(t-(1-2a))-a]\sqrt{1+m^2}dt =\\=  \frac{9 (1-2a)^2\sqrt{1 + m^2} \left[ (1-2a)^2m^2+4(a-1)am-2a(a+1)+m \right]}{4}.
\end{aligned}
\end{equation*}
Given the range of values of \(a\) that will be considered later, namely \(0<a<\tfrac{1}{4}\), numerically we can see that this integral has a lower bound for $m=\frac{a}{1-2a}$. The value of the same integral with $m=\frac{1-a}{2a-1}$ will be addressed in the previous case, so we can take~\eqref{eq:lowerbound9.7} as lower bound.

\item \label{item: Case 9.7} We consider $\overline{S}$ to be a portion of $\hat{S}$, terminating at the point $E$, with slope 
$m \in [ \frac{a}{1-2a}, + \infty]$.
\begin{equation*}
\begin{aligned}
I_{\overline{S}}= \int_{1-2a-\frac{a}{m}}^{1-2a} 27t[m(t-(1-2a))+a][1-t-m(t-(1-2a))-a]\sqrt{1+m^2}dt =\\=  -\frac{9a^3\sqrt{1 + m^2} \left[ a(16m^2+7m+1)-2m(4m+1)\right]}{4m^3}
\end{aligned}
\end{equation*}
Given the range of values of \(a\) that will be considered later, namely \(0<a<\tfrac{1}{4}\), numerically we observe that we have a lower bound as $m \to + \infty$. So,
\begin{equation}
\label{eq:lowerbound9.7}
    \inf_{m \in [ \frac{a}{1-2a}, + \infty]} I_{\overline{S}} = 18 a^3(1-2a).
\end{equation}
Note that is the same as~\eqref{eq:LowerBoundIntegralCase9.4}.

\item \label{item: Case 9.8} 
Here, $\overline{S}$ is a segment connecting point $E$ with side $\overline{BC}$, so that it belongs to the family of lines with equation $\hat{y}-a=m(\hat{x}-(1-2a))$. As we can see from the Figure~\ref{fig:BrokenLineCasistiche}, the situation is the same of Case~\ref{item: Case 9.4}, with $m \in [ -\frac{1}{2}, + \infty]$. So the integral will be the same (but with a change of sign, since in this case $1-2a \leq \frac{1-a-2am-m}{m+1}$):
\begin{equation*}
\begin{aligned}
I_{\overline{S}}= \int_{1-2a}^{\frac{1-a-2am-m}{m+1}} 27t[m(t-(1-2a))+a][1-t-m(t-(1-2a))-a]\sqrt{1+m^2}dt =\\=  -\frac{9 a^3 \sqrt{1 + m^2} \left( -2  (1 + m)(4m + 3) + a(16m^2+25m+10) \right)}{4 (1 + m)^3}
\end{aligned}
\end{equation*}
Numerically, we can see that this integral reach a minimum inside the interval of $m$. So we can perform the $m-$th derivative:
\begin{equation*}
    \frac{\partial I_{\overline{S}} }{\partial m} = -\frac{9a^3\{ 4+6m-2m^2(4+5m)+a[-5+m(-8+14m+23m^2)]\}}{4(m+1)^4 \sqrt{m^2+1}}
\end{equation*}
We obtain numerically the minimum. Its values and the values of the corresponding integrals are listed in Table~\ref{Table:Case9.8}.

\begin{table}
\footnotesize
\centering
\begin{tabular}{l|c|c|c|r}
\toprule
\textbf{$a$} & \textbf{$m$} for 9.8 & \textbf{$I_{\hat{S}}$ } for 9.8 & \textbf{$m$ } for 9.9& \textbf{$I_{\hat{S}}$ } for 9.9\\
\midrule
0.0238 & 0.7471 & 0.0001 &  -3.5660 & 0.0002 \\
0.0476 & 0.7546  & 0.0011 & -3.7457 & 0.0017 \\
0.0714 & 0.7629& 0.0035 & -3.9624 & 0.0055 \\
0.0952 &  0.7721& 0.0079 &  -4.2295 & 0.0123 \\
0.1190 & 0.7823& 0.0146 & -4.5680 & 0.0227 \\
0.1429 & 0.7939 & 0.0239 & -5.0126 & 0.0369 \\
0.1667 & 0.8071& 0.0357 & -5.6257& 0.0548 \\
0.1905 & 0.8222 & 0.0500 & -6.5310 & 0.0762 \\
0.2143 & 0.8396 & 0.0666 & -8.0165 & 0.1005 \\
0.2381 & 0.8602 & 0.0849 & -10.9427& 0.1268 \\
\end{tabular} 
\caption{Analysis of the minimum value of the integral for Case~\ref{item: Case 9.8} (left) and for Case~\ref{item: Case 9.9} (right). For fixed $a$, the table reports the value of $m$ at which the integral attains its minimum, together with the corresponding minimum value.}
\label{Table:Case9.8}
\end{table}

\item \label{item: Case 9.9} 
Here, $\overline{S}$ is a segment connecting point $E$ with side $\overline{AB}$, so that it belongs to the family of lines with equation $\hat{y}-a=m(\hat{x}-(1-2a))$. As we can see from the Figure~\ref{fig:BrokenLineCasistiche}, the situation is the same of Case~\ref{item: Case 9.7}, with $m \in [ -\infty, -\frac{1}{2}]$. So the integral will be the same (but with a change of sign, since in this case $1-2a \leq 1-2a-\frac{a}{m}$):
\begin{equation*}
\begin{aligned} 
I_{\overline{S}}= \int_{1-2a}^{1-2a-\frac{a}{m}} 27t[m(t-(1-2a))+a][1-t-m(t-(1-2a))-a]\sqrt{1+m^2}dt =\\= \frac{9a^3\sqrt{1 + m^2} \left[ a(16m^2+7m+1)-2m(4m+1)\right]}{4m^3}
\end{aligned}
\end{equation*}
Numerically, we can see that this integral reach a minimum inside the interval of $m$, if $0 \leq a \leq \frac{1}{4}$. So we can perform the $m-$th derivative and obtain numerically a lower bound, in Table~\ref{Table:Case9.8}.

\end{enumerate}

%% References with bibTeX database:
\bibliographystyle{IEEEtranDOI}
\bibliography{biblio.bib}

@book{ciarlet1978finite,
  title={The Finite Element Method for Elliptic Problems},
  author={Ciarlet, P.G.},
  series={Studies in Mathematics and its Applications},
  year={1978},
  publisher={North Holland}
}

@article{berrone2006robust,
  title={Robust a posteriori error estimates for finite element discretizations of the heat equation with discontinuous coefficients},
  author={Berrone, Stefano},
  journal={ESAIM: Mod{\'e}lisation math{\'e}matique et analyse num{\'e}rique},
  volume={40},
  number={6},
  pages={991--1021},
  year={2006},
  doi={10.1051/m2an:2006034}
}

@article{CanutoNochetto2024,
      title={Adaptive Finite Element Methods}, 
      author={Bonito, Andrea and Canuto, Claudio and Nochetto, Ricardo H. and Veeser, Andreas},
      volume={33}, 
      doi={10.1017/S0962492924000011}, 
      journal={Acta Numerica}, 
      year={2024}, 
      pages={163–485}
}

@article{arnold1984stable,
  title={A stable finite element for the Stokes equations},
  author={Arnold, Douglas N and Brezzi, Franco and Fortin, Michel},
  journal={Calcolo},
  volume={21},
  number={4},
  pages={337--344},
  year={1984},
  publisher={Springer},
  doi={10.1007/BF02576171}
}

@article{Fortin1977,
  author  = {Fortin, Michel},
  title   = {An analysis of the convergence of mixed finite element methods},
  journal = {RAIRO. Analyse num\'erique},
  volume  = {11},
  number  = {4},
  pages   = {341--354},
  year    = {1977},
  doi     = {10.1051/m2an/1977110403411},
}

@book{BoffiBrezziFortin2013,
  author    = {Boffi, Daniele and Brezzi, Franco and Fortin, Michel},
  title     = {Mixed Finite Element Methods and Applications},
  series    = {Springer Series in Computational Mathematics},
  volume    = {44},
  publisher = {Springer Berlin Heidelberg},
  address   = {Berlin, Heidelberg},
  year      = {2013},
  pages     = {xiv + 685},
  isbn      = {978-3-642-36518-8},
  doi       = {10.1007/978-3-642-36519-5}
}

@article{chouly2026review,
  title={A Review on Some Discrete Variational Techniques for the Approximation of Essential Boundary Conditions},
  author={Chouly, Franz},
  journal={Vietnam Journal of Mathematics},
  volume={54},
  number={1},
  pages={73--115},
  year={2026},
  publisher={Springer},
  doi={10.1007/s10013-024-00702-1}
}

@article{Polydim,
title = {{POLYDIM: A C++ library for POLYtopal DIscretization Methods}},
journal = {Computer Physics Communications},
volume = {320},
pages = {109937},
year = {2026},
issn = {0010-4655},
doi = {10.1016/j.cpc.2025.109937},
author = {Stefano Berrone and Andrea Borio and Gioana Teora and Fabio Vicini},
}

@incollection{berrone2016adaptive,
  title={An adaptive fictitious domain method for elliptic problems},
  author={Berrone, Stefano and Bonito, Andrea and Verani, Marco},
  booktitle={Advances in Discretization Methods: Discontinuities, Virtual Elements, Fictitious Domain Methods},
  pages={229--244},
  year={2016},
  publisher={Springer},
  doi={10.1007/978-3-319-41246-7\_11}
}

@article{mommer2006smoothness,
  title={A smoothness preserving fictitious domain method for elliptic boundary-value problems},
  author={Mommer, Mario S},
  journal={IMA journal of numerical analysis},
  volume={26},
  number={3},
  pages={503--524},
  year={2006},
  publisher={Oxford University Press},
  doi={10.1093/imanum/dri045}
}

@article{berrone2019optimal,
  title={An optimal adaptive fictitious domain method},
  author={Berrone, Stefano and Bonito, Andrea and Stevenson, Rob and Verani, Marco},
  journal={Mathematics of Computation},
  volume={88},
  number={319},
  pages={2101--2134},
  year={2019},
   doi={10.1090/mcom/3414}
}

@article{Girault1995,
  author    = {Girault, V. and Glowinski, R.},
  title     = {Error analysis of a fictitious domain method applied to a Dirichlet problem},
  journal   = {Japan Journal of Industrial and Applied Mathematics},
  volume    = {12},
  number    = {3},
  pages     = {487--514},
  year      = {1995},
  doi       = {10.1007/BF03167240},
}

@article{GLOWINSKI1994283,
title = {A fictitious domain method for Dirichlet problem and applications},
journal = {Computer Methods in Applied Mechanics and Engineering},
volume = {111},
number = {3},
pages = {283-303},
year = {1994},
issn = {0045-7825},
doi = {10.1016/0045-7825(94)90135-X},
author = {Roland Glowinski and Tsorng-Whay Pan and Jacques Periaux}
}

@article{Babuska1972/73,
author = {Babuska, Ivo},
journal = {Numerische Mathematik},
pages = {179-192},
title = {The Finite Element Method with Lagrangian Multipliers.},
doi = {10.1007/BF01436561},
volume = {20},
year = {1973},
}

@article{burman2025cut,
  title={Cut finite element methods},
  author={Burman, Erik and Hansbo, Peter and Larson, Mats G and Zahedi, Sara},
  journal={Acta Numerica},
  volume={34},
  pages={1--121},
  year={2025},
  publisher={Cambridge University Press},
  doi={10.1017/S0962492925000017}
}

@article{burman2010fictitious,
  title={Fictitious domain finite element methods using cut elements: I. A stabilized Lagrange multiplier method},
  author={Burman, Erik and Hansbo, Peter},
  journal={Computer Methods in Applied Mechanics and Engineering},
  volume={199},
  number={41-44},
  pages={2680--2686},
  year={2010},
  publisher={Elsevier},
  doi={10.1016/j.cma.2010.05.011}
}

@article{burman2012fictitious,
  title={Fictitious domain finite element methods using cut elements: II. A stabilized Nitsche method},
  author={Burman, Erik and Hansbo, Peter},
  journal={Applied Numerical Mathematics},
  volume={62},
  number={4},
  pages={328--341},
  year={2012},
  publisher={Elsevier},
  doi={10.1016/j.apnum.2011.01.008}
}

@article{hansbo2002unfitted,
  title={An unfitted finite element method, based on Nitsche’s method, for elliptic interface problems},
  author={Hansbo, Anita and Hansbo, Peter},
  journal={Computer methods in applied mechanics and engineering},
  volume={191},
  number={47-48},
  pages={5537--5552},
  year={2002},
  publisher={Elsevier},
  doi={10.1016/S0045-7825(02)00524-8}
}

@article{clement1975approximation,
  title     = {Approximation by finite element functions using local regularization},
  author    = {Cl{\'e}ment, Ph.},
  journal   = {Revue fran{\c{c}}aise d'automatique, informatique, recherche op{\'e}rationnelle. Analyse num{\'e}rique},
  volume    = {9},
  number    = {R2},
  pages     = {77--84},
  year      = {1975},
  publisher = {EDP Sciences},
  doi       = {10.1051/m2an/197509R200771}
}

\end{document}